\theoremstyle{plain}
    \newtheorem{theorem}{Theorem}[section]
    \newtheorem{lemma}[theorem]{Lemma}
    \newtheorem{corollary}[theorem]{Corollary}
    \newtheorem{proposition}[theorem]{Proposition}
 \theoremstyle{definition}
    \newtheorem{definition}[theorem]{Definition}
    \newtheorem{example}[theorem]{Example}
    \newtheorem{remark}[theorem]{Remark}
\theoremstyle{remark}
\numberwithin{equation}{section}
\DeclareMathOperator{\tr}{tr}
\DeclareMathOperator{\Ad}{Ad}
\DeclareMathOperator{\ad}{ad}
\DeclareMathOperator{\ind}{index}
\DeclareMathOperator{\grad}{grad}
\DeclareMathOperator{\End}{End}
\DeclareMathOperator{\Hom}{Hom}
\DeclareMathOperator{\topp}{top}
\DeclareMathOperator{\reg}{reg}
\DeclareMathOperator{\mat}{mat}
\DeclareMathOperator{\diag}{diag}
\DeclareMathOperator{\Spin}{Spin}
\DeclareMathOperator{\OO}{O}
\DeclareMathOperator{\SO}{SO}
\DeclareMathOperator{\Cl}{Cl}
\begin{document}


\newcommand{\myemph}{\emph}

\newcommand{\Spinc}{\Spin^c}

    \newcommand{\R}{\mathbb{R}}
    \newcommand{\C}{\mathbb{C}} 
    \newcommand{\N}{\mathbb{N}}
    \newcommand{\Z}{\mathbb{Z}} 
    \newcommand{\Q}{\mathbb{Q}}
     \newcommand{\bH}{\mathbb{H}}

\newcommand{\kk}{\mathfrak{k}}  
\newcommand{\kt}{\mathfrak{t}}
\newcommand{\kg}{\mathfrak{g}}  
\newcommand{\kh}{\mathfrak{h}} 
\newcommand{\ka}{\mathfrak{a}} 

\newcommand{\cE}{\mathcal{E}}
\newcommand{\cS}{\mathcal{S}}
\newcommand{\cL}{\mathcal{L}}
\newcommand{\cH}{\mathcal{H}}
\newcommand{\cO}{\mathcal{O}}
\newcommand{\cP}{\mathcal{P}}
\newcommand{\cD}{\mathcal{D}}
\newcommand{\cF}{\mathcal{F}}
\newcommand{\cX}{\mathcal{X}}

\newcommand{\ddt}{\left. \frac{d}{dt}\right|_{t=0}}
\newcommand{\Sj}{ \sum_{j = 1}^{\dim M}}

\newcommand{\indK}{\ind_K^{L^2}}
\newcommand{\ii}{\sqrt{-1}}
\newcommand{\nabLC}{\nabla^{TM}}
\newcommand{\QSpinc}{Q^{\Spinc}}
\newcommand{\vf}{v}
\newcommand{\rhoK}{\rho}
\newcommand{\specialin}{\hspace{-1mm} \in \hspace{1mm} }

\title{Equivariant indices of $\Spinc$-Dirac operators for proper moment maps}  
\date{\today}
\author{Peter Hochs\footnote{
University of Adelaide, \texttt{peter.hochs@adelaide.edu.au}}
\hspace{0mm}
 and Yanli Song\footnote{University of Toronto, \texttt{songyanl@math.utoronto.ca}} } 

\maketitle

\begin{abstract}
We define an equivariant index of $\Spinc$-Dirac operators on possibly noncompact manifolds, acted on by compact, connected Lie groups. 
The main result in this paper is that the index
decomposes into irreducible representations according to the quantisation commutes with reduction principle. 
\end{abstract}

\tableofcontents


\section{Introduction}

Let $M$ be a possibly noncompact, connected, even-dimensional manifold, on which a compact, connected Lie group $K$ acts. Suppose $M$ has a $K$-equivariant $\Spinc$-structure. An equivariant connection on the determinant line bundle naturally induces
a map from $M$ to the Lie algebra of $K$. This is called a moment map, and is defined via a formula due to Kostant. Assuming this moment map to be proper, we define an equivariant index of $\Spinc$-Dirac operators in this setting. This generalises an index of Dirac-type operators defined by Braverman \cite{Braverman02}, in the $\Spinc$-case.
We show that the index has a multiplicativity property, which allows us to prove that it satisfies the quantisation commutes with reduction principle.
This result yields a geometric way to
decompose the index into irreducible representations. It generalises the main result by Paradan and Vergne in \cite{Paradan14, Paradan14CRAS} from compact to noncompact manifolds. At the same time, it is an analogue in the more general $\Spinc$-case of the
 results  for symplectic manifolds by Ma and Zhang in \cite{Zhang14}, and by Paradan in \cite{Paradan11}.

\subsection*{Quantisation and reduction}

The \myemph{quantisation commutes with reduction} principle goes back to Guillemin and Sternberg's 1982 paper \cite{Guillemin82}. They made this principle rigorous, and proved it, for compact Lie groups acting on compact K\"ahler manifolds. Their conjecture that quantisation commutes with reduction for general compact symplectic manifolds inspired a large and impressive body of work, which culminated in the proofs in the late 1990s by Meinrenken \cite{Meinrenken98}  and Meinrenken and Sjamaar  \cite{Meinrenken99}. The richness of this conjecture was illustrated by completely different proofs by Tian and Zhang \cite{Zhang98} and Paradan \cite{Paradan01}.

For a compact Lie group $K$ acting on a compact symplectic manifold $(M, \omega)$, the definition of geometric quantisation, attributed to Bott, is
\begin{equation} \label{eq def quant intro}
Q_K(M, \omega) := \ind_K(D^L),
\end{equation}
the equivariant index of a ($\Spinc$- or Dolbeault-) Dirac operator $D^L$ on $M$, coupled to a line bundle $L$ with first Chern class $[\omega]$. Reduction involves a \myemph{moment map} $\mu: M \to \kk^*$, such that for all $X \in \kk$,
\begin{equation} \label{eq mom map intro}
2\ii \mu_X = \cL_X - \nabla_{X^M},
\end{equation}
where $\mu_X \in C^{\infty}(M)$ is the pairing of $\mu$ and $X$, $\cL_X$ denotes the Lie derivative of sections of $L$, $\nabla$ is a connection on $L$ with curvature $-2 \ii \omega$, and $X^M$ is the vector field on $M$ induced by $X$. In the symplectic setting, the action by $K$ on $(M, \omega)$ represents a  symmetry of a physical system, and a moment map is the associated conserved quantity.
The \myemph{reduced space} at a value $\xi \in \kk^*$ of $\mu$ is
\[
M_{\xi} := \mu^{-1}(K\cdot \xi)/K,
\] 
and is again symplectic (if smooth) by Marsden and Weinstein's result \cite{Marsden74}. 

Quantisation commutes with reduction is the decomposition
\begin{equation} \label{eq [Q,R]=0 intro}
Q_K(M, \omega) = \bigoplus_{\lambda} Q(M_{\lambda}) \pi_{\lambda + \rho},
\end{equation}
where $\lambda$ runs over the dominant integral weights of $K$ (with respect to a maximal torus and positive root system), and $\pi_{\lambda + \rho}$ is the irreducible representation of $K$ with infinitesimal character $\lambda + \rho$, i.e.\ with highest weight $\lambda$. Here $\rho$ is half the sum of a choice of positive roots. (In the symplectic setting one usually parametrises irredicible representations by their highest weights, but in the $\Spinc$-setting considered in this paper, using infinitesimal characters is more natural.) In this way, one obtains a geometric formula for the multiplicity of any irreducible representation in the index \eqref{eq def quant intro}.

\subsection*{Generalisations}

After the equality \eqref{eq [Q,R]=0 intro} was proved for compact $M$ and $K$, the natural question arose what generalisations are possible and useful. Results have been obtained in several directions.
\subsubsection*{Cocompact actions}
Landsman \cite{Landsman05} proposed a definition of quantisation and reduction in terms of $K$-theory of $C^*$-algebras, for possibly noncompact groups $G$ acting on possibly noncompact symplectic manifolds $(M, \omega)$, as long as the action is cocompact, i.e.\ $M/G$ is compact. He conjectured that in this setting, quantisation commutes with reduction at the trivial representation, i.e.\ for $\lambda = 0$ in \eqref{eq [Q,R]=0 intro}. Results were obtained in \cite{Hochs09, Hochs15, Landsman08}. An asymptotic version of Landsman's conjecture was proved in \cite{Mathai10}, which was generalised to cases where $M/G$ may be noncompact in \cite{Mathai13}.
\subsubsection*{Compact groups, noncompact manifolds}
Vergne \cite{Vergne06} generalised the definition \eqref{eq def quant intro} of geometric quantisation to {compact  groups} and possibly {noncompact manifolds}, and conjectured that the appropriate generalisation of \eqref{eq [Q,R]=0 intro} still holds. Ma and Zhang \cite{Zhang14} generalised  \eqref{eq def quant intro} in a different way, assuming the moment map to be proper, which contains Vergne's definition as a special case. They gave an analytic proof that quantisation commutes with reduction for their definition. Paradan \cite{Paradan11} later used very different, topological, techniques to also generalise Vergne's definition and prove that quantisation commutes with reduction.
\subsubsection*{Compact $\Spinc$-manifolds}
It was noted in \cite{Cannas00} that {$\Spinc$-manifolds} provide the most general framework for studying geometric quantisation, and it was shown that quantisation commutes with reduction for circle actions on compact $\Spinc$-manifolds.
Recently, Paradan and Vergne \cite{Paradan15, Paradan14,  Paradan14CRAS} generalised this to actions by arbitrary compact, connected Lie groups. 
It is fascinating that quantisation commutes with reduction in this generality, so that it is a property of indices of $\Spinc$-Dirac operators rather than just of geometric quantisation.

 In the general $\Spinc$-setting, the term `quantisation' becomes restrictive, since Paradan and Vergne's result applies to all $\Spinc$-Dirac operators on compact manifolds, not just those used to define geometric quantisation. However, interpreting the index of such an operator as a quantisation makes it natural to generalise the quantisation commutes with reduction principle. The $\Spinc$-version of this principle has a great scope for applications, and for example implies Atiyah and Hirzebruch's vanishing theorem \cite{Atiyah70b} for $\Spin$-manifolds. The  results on Landsman's conjecture mentioned above were generalised to $\Spinc$-manifolds in \cite{Mathai14}.

\subsection*{The main result}

The results in \cite{Zhang14, Paradan11} (on compact groups and possibly noncompact symplectic manifolds) are based on two different generalisations of \eqref{eq def quant intro}, and analytic one and a topological one, which give the same result. Braverman \cite{Braverman02} defined a third equivariant index for compact groups acting on possibly noncompact manifolds. This is based on a deformation
\[
D_{\mu} := D - \ii c(\vf^{\mu})
\]
of a Dirac-type operator $D$. Here $\vf^{\mu}$ is a vector field induced by an equivariant map $\mu: M \to \kk$, and $c$ denotes the Clifford action. Braverman assumed the set of zeroes of $\vf^{\mu}$ to be compact, and showed that his index then equals the ones used  in \cite{Zhang14, Paradan11} for symplectic manifolds. 

In this paper, we first generalise Braverman's index in the case of $\Spinc$-Dirac operators. We take the map $\mu$ to be a \myemph{$\Spinc$-moment map}, which directly generalises \eqref{eq mom map intro} (where $L$ is now the determinant line bundle of an equivariant $\Spinc$-structure).
Rather than assuming $\vf^{\mu}$ to vanish in a compact set, we assume that $\mu$ is proper, which we show to be a weaker condition. The index takes values in $\hat R(K)$, the Grothendieck group of the semigroup of representations of $K$ in which all irreducible representations occur with finite multiplicities. We denote this index by
\[
\indK(\cS, \mu) \quad \in \hat R(K),
\]
where $\cS \to M$ is the spinor bundle. 

Our main result is that it satisfies the quantisation commutes with reduction principle. In this context, analogously to \eqref{eq def quant intro}, we also denote the index by 
\[
\QSpinc_K(M, \mu) :=  \indK(\cS, \mu) \quad \in \hat R(K).
\]
Let $\kk^M$ be the generic (i.e.\ minimal) infinitesimal stabiliser of the action by $K$ on $M$. In the symplectic case, there is always an element $\xi \in \kk$ whose stabiliser $\kh \subset \kk$ with respect to the adjoint action satisfies
\[
([\kh, \kh]) = ([\kk^M, \kk^M]),
\]
where the round brackets denote conjugacy classes. (See Remark 3.12 in \cite{Lerman98}). In the $\Spinc$-case, existence of such a stabiliser algebra $\kh$ is necessary for $\QSpinc_K(M, \mu)$ to be nonzero.
\begin{theorem}[Quantisation commutes with reduction] \label{thm [Q,R]=0 intro}
Let $K$ be a compact, connected Lie group, and let $M$ be an even-dimensional, connected 
manifold, with an
 action by $K$, and a  $K$-equivariant $\Spinc$-structure. Let $\mu$ be a $\Spinc$-moment map, 
 and suppose it is proper.  
 If there is no algebra $\kh \subset \kk$ as above, then  $\QSpinc_K(M, \mu) = 0$. If such an algebra does exist,
 then the multiplicities $m_{\lambda} \in \Z$ in 
\begin{equation} \label{eq [Q,R]=0 intro 2}
 Q^{\Spin^c}_K(M, \mu) = \bigoplus_{\lambda} m_{\lambda} \pi_{\lambda} \quad \in \hat R(K),
\end{equation}
equal sums of quantisations of reduced spaces, as specified in \eqref{eq [Q,R]=0}. If the generic stabiliser of the action is Abelian, which in particular occurs if $\mu$ has a regular value, then this sum has a single term:
\[
m_{\lambda} =  Q^{\Spinc}(M_{\lambda} ).
\]
\end{theorem}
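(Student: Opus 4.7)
The plan is to reduce the statement to the compact $\Spin^c$-version of $[Q,R]=0$ proved by Paradan and Vergne, via a shifting-trick argument built on the multiplicativity property of $\indK(\cS,\mu)$ established earlier in the paper. For a dominant weight $\lambda$, the multiplicity $m_{\lambda}$ of $\pi_{\lambda}$ in $\QSpinc_K(M,\mu)$ equals the multiplicity of the trivial representation in $\QSpinc_K(M,\mu)\otimes \pi_{\lambda}^{*}$. I would realise $\pi_{\lambda}^{*}$ as the $\Spinc$-quantisation $\QSpinc_K(N_{\lambda})$ of a compact $K$-equivariant $\Spinc$-manifold $N_{\lambda}$ (for instance a coadjoint orbit of $K$ with an appropriately twisted $\Spin^{c}$-structure), with canonical moment map $\mu_{N_{\lambda}}$. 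Multiplicativity then yields
\[
\bigl[\QSpinc_K(M,\mu):\pi_{\lambda}\bigr]=\bigl[\QSpinc_K(M\times N_{\lambda},\ \mu\oplus\mu_{N_{\lambda}}):\pi_{0}\bigr],
\]
and since $N_{\lambda}$ is compact the combined moment map remains proper on $M\times N_{\lambda}$, so $m_{\lambda}$ is genuinely a finite integer.

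The next step is to localise the right-hand side to a compact model. By properness of $\mu\oplus\mu_{N_{\lambda}}$, the preimage of $0$ is compact, and the trivial isotype of the deformed index $D-\ii c(\vf)$ is governed by a compact piece of the zero set of the Kirwan vector field. Choose a relatively compact $K$-invariant open neighbourhood $U$ supporting this data and, invoking the excision/invariance properties built into the construction of $\indK$, compactify $U$ to a compact $K$-manifold $\widetilde{M}$ carrying an extension of the $\Spinc$-structure without changing the $\pi_{0}$-isotype. On the compact manifold $\widetilde{M}$ the Paradan--Vergne theorem expresses that isotype as a $\Spinc$-quantisation of the reduced space $\widetilde{M}/\!/_{0} K$, which one then identifies with $M_{\lambda}$ (any $\rho$-shift being absorbed into the choice of $N_{\lambda}$ and the infinitesimal-character convention). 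In the generic Abelian stabiliser case, this gives directly $m_{\lambda}=Q^{\Spinc}(M_{\lambda})$; in the non-Abelian case, the refinement of Paradan--Vergne on the compact piece produces the finite sum of quantisations of reduced spaces cited in \eqref{eq [Q,R]=0}. The vanishing assertion when no stabiliser algebra $\kh\subset\kk$ of the prescribed conjugacy class exists is inherited from the compact case: the obstruction on $M$ persists on $M\times N_{\lambda}$, so Paradan--Vergne's vanishing already applies to $\widetilde{M}$.

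The hard part will be verifying that multiplicativity really does commute with the extraction of a single isotypical component in the non-compact setting, and that the Braverman-type localisation/compactification step preserves the trivial isotype when $U$ is enlarged to $\widetilde{M}$. This requires controlling the Braverman deformation parameter uniformly and ensuring that the behaviour at infinity of $\mu$ (where $\vf^{\mu}$ may still have non-compact zero set, since the paper weakens Braverman's hypothesis to properness of $\mu$) contributes nothing to the invariant part of the index. A subsidiary technical point is the existence and canonical choice of the auxiliary compact manifold $N_{\lambda}$ whose $\Spinc$-quantisation realises $\pi_{\lambda}^{*}$ with the correct infinitesimal-character normalisation, especially for singular $\lambda$ where the coadjoint orbit is smaller than a full flag variety.
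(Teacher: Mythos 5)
Your overall strategy is the paper's: multiplicativity gives a shifting trick, the problem localises to a compact region, and Paradan--Vergne's compact result finishes it. But two of the steps you flag as ``hard parts'' are where the proposal diverges from, and would likely fail without, the actual mechanism.

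First, the localisation. You say the trivial isotype is ``governed by a compact piece of the zero set of the Kirwan vector field,'' but the zero set $Z_{\mu_{M\times(-\cO)}}$ is in general noncompact even after tensoring with the coadjoint orbit, and properness of the moment map does not by itself cut the invariant part down to a compact piece. The tool that does this in the paper is the function $d$ from \eqref{def func d} on $Z_{\mu_{M\times(-\cO)}}$: Proposition~\ref{prop vanishing d} shows that connected components with $d>0$ contribute nothing to the invariant part, Proposition~\ref{prop dO nonneg} (Paradan--Vergne's Theorem 4.20) shows $d_{\cO}\ge 0$ on $M\times(-\cO)$, and properness forces $d^{-1}(0)$ to be compact. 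It is this combination, together with Lemma~\ref{lem 4.16}, that yields the localised identity $m_{\cO}=\QSpinc_K\bigl(U^{=0},\mu^{M\times(-\cO)}|_{U^{=0}}\bigr)^K$ of Corollary~\ref{cor mO loc}. Without invoking $d$, you have no principled reason to discard the noncompact parts of the zero set.

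Second, the compactification. You propose to compactify the localising neighbourhood $U$ to a closed $K$-manifold $\widetilde{M}$ carrying an extended $\Spinc$-structure and then apply Paradan--Vergne to $\widetilde{M}$. The paper does not do this, and it is unclear it can be done: there is no reason a $K$-equivariant $\Spinc$-structure, moment map, and metric on a noncompact neighbourhood extend over a compactification, nor that the $K$-invariant isotype would be preserved by such an extension. What the paper does instead is observe that once one is in the localised form of Corollary~\ref{cor mO loc}, the remaining steps in \cite{Paradan14} (Propositions 4.15, 4.25, 4.28, Theorem 4.29, Proposition 5.8) are local computations near the compact set $Z^{=0}_{\mu_{M\times(-\cO)}}$ and apply verbatim without any compactification. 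Replacing that observation with an actual compactification is a genuine gap in your argument, and an unnecessary one. Finally, your worry about singular $\lambda$ is moot: in the $\Spinc$/infinitesimal-character normalisation the paper uses regular admissible orbits $\cO\cong K/T$ throughout, so the auxiliary manifold $N_\lambda = -\cO$ is always the full flag variety.
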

This result is a generalisation of Paradan and Vergne's result in \cite{Paradan14} from compact to noncompact manifolds, and a $\Spinc$-analogue of the results for symplectic manifolds in \cite{Zhang14, Paradan11}.

\subsection*{Ingredients of the proof}

We use a combination of analytic and geometric methods. To prove that our index is well-defined, we show that every irreducible representation occurs in it with finite multiplicity. This proof is based on a vanishing result. 
\begin{theorem}[Vanishing]\label{thm vanishing intro}
For every irreducible representation $\pi \in \hat K$, there is a constant $C_{\pi} > 0$ such that for every even-dimensional $K$-equivariant $\Spinc$-manifold $U$, with spinor bundle $\cS \to U$ and moment map $\mu$ such that $\vf^{\mu}$ vanishes in a compact set, one has
\[
\bigl[\indK(\cS, \mu):\pi \bigr] = 0
\]
if $\|\mu(m)\| > C_{\pi}$ for all $m \in U$.
\end{theorem}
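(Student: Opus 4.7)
The plan is to prove vanishing by a Bochner--Lichnerowicz argument for the deformed operator $D_\mu = D - \ii c(\vf^\mu)$, in the spirit of Braverman \cite{Braverman02}, but exploiting the $\Spinc$-moment map equation where the symplectic form would appear in Braverman's original computation. Writing $D_\mu^2 = D^2 + |\vf^\mu|^2 + \ii [D, c(\vf^\mu)]$ and combining the Lichnerowicz formula for $D^2$ with the $\Spinc$-version of Kostant's formula for the spinor Lie derivative $\cL_X^\cS$ (the analogue of \eqref{eq mom map intro} lifted to $\cS$), I expect to arrive at an identity
\[
D_\mu^2 = \nabla^*\nabla + |\vf^\mu|^2 + 2\|\mu\|^2 + F_\mu + A,
\]
where $F_\mu$ is a skew-Hermitian first-order operator built from the pairing of the Lie derivative with $\mu$, and $A$ is a bounded zeroth-order bundle endomorphism with $\|A(m)\| \leq C_1(1+\|\mu(m)\|)$ for some $C_1 = C_1(U)$. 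The $2\|\mu\|^2$ term, which has no analogue in Braverman's symplectic setup, arises from iterating the moment map equation against $\mu$ itself and is the primary source of the vanishing.

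I would then restrict to the $\pi$-isotypic subspace of $L^2(\cS)$ and decompose further by weights $\nu$ of a maximal torus $T \subset K$ appearing in $\pi$. The operator $F_\mu$, being $K$-equivariant and skew-Hermitian, acts on the weight-$\nu$ component with pairing against a section $s$ bounded in absolute value by $2\|\mu(m)\|\|\nu\||s(m)|^2$. Integrating, for any $s$ in the $\pi$-isotypic component,
\[
\langle D_\mu^2 s, s\rangle \;\geq\; \int_U \bigl( 2\|\mu\|^2 - 2\kappa(\pi) \|\mu\| - C_1(1+\|\mu\|) \bigr) |s|^2\, dm,
\]
where $\kappa(\pi) := \max_{\nu \in P(\pi)} \|\nu\|$. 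Choosing $C_\pi$ so that this integrand is strictly positive on $\{\|\mu\| > C_\pi\}$ and applying the hypothesis $\|\mu(m)\| > C_\pi$ for all $m \in U$ yields $D_\mu^2 > 0$ on the $\pi$-isotypic part of $L^2(\cS)$; a parallel argument for $D_\mu^*$ rules out cokernel contributions, giving $[\indK(\cS,\mu):\pi] = 0$.

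The main obstacle is that the naive estimate makes $C_\pi$ depend on $U$ through $C_1$, whereas the theorem asserts $C_\pi$ is universal. To overcome this, I would use the $\Spinc$-moment map equation to express the derivatives of $\mu$ and of the spinor connection appearing in $A$ in terms of the curvature $R^L$ of the determinant line bundle, which in turn equals $-2\ii$ times an intrinsic $2$-form determined by $\mu$. A Cauchy--Schwarz absorption then trades $C_1(1+\|\mu\|)$ against the leading $2\|\mu\|^2$ plus $|\vf^\mu|^2$ terms, leaving a threshold depending only on the weights of $\pi$ and on $K$. This universalisation is the $\Spinc$-analogue of the core technical estimate in Braverman's vanishing theorem, and is where the $\Spinc$-moment map hypothesis is essential.
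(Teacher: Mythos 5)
Your overall strategy (Bochner formula for a deformed $\Spinc$-Dirac operator, moment-map identity to extract a $\|\mu\|^2$ term, then a representation-theoretic bound on the Lie-derivative part on the $\pi$-isotypic component) is the same Tian--Zhang style argument the paper uses, and the term you call $F_\mu$ is indeed the crucial first-order operator $2\ii\cL^{\cS}_\mu$. However, there are two genuine gaps.

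First, you work with $D_\mu$ itself rather than $D_{T\mu}$ for a large parameter $T$, and this is not a cosmetic choice: the terms you bundle into $A$ (curvature, the Lichnerowicz scalar, and most importantly $\sum_j c(e_j)c(\nabla_{e_j}v^{\mu})$) are \emph{not} uniformly bounded on $U$, and in fact the third one grows with the derivatives of $\mu$ near the zeros of $v^{\mu}$. Your ``Cauchy--Schwarz absorption'' into $2\|\mu\|^2 + |v^{\mu}|^2$ cannot work, because near $Z_{\mu}$ the term $|v^{\mu}|^2$ vanishes while the Clifford term is of the same order as $2\|\mu\|^2$ can be made arbitrarily large by choosing $U$ with highly oscillating curvature. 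The paper overcomes this by scaling: $D_{T\mu}^2 = D^2 - \ii T\sum c(e_j)c(\nabla_{e_j}v^{\mu}) + 2\ii T\nabla^{\cS}_{v^{\mu}} + T^2\|v^{\mu}\|^2$, and then establishing a harmonic-oscillator estimate (Lemma \ref{lem bound KT}) giving $D^2 - T\tr|\cL^{T_mM}_{v^{\mu^t}}| + T^2\|v^{\mu^t}\|^2 \geq -C - T\varepsilon$ so that the $T$-linear good terms dominate the $T$-independent constant $C$ once $T$ is large. Without the parameter $T$, the constant $C$ would depend on $U$ and your $C_\pi$ would not be universal. The final threshold $C_\pi = B_\pi + 2\|\rho\|$ comes precisely from comparing the $T$-linear coefficients.

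Second, and related: the $\sum c(e_j)c(\nabla_{e_j}v^{\mu})$ term near $Z_{\mu}$ cannot simply be dominated by $\|\mu\|^2$ -- rather, its principal contribution equals $\tr|\cL^{T_mM}_{\alpha}|$ at a zero $m$ with $\mu(m)=\alpha$, and this feeds into the function $d(m) = \|\mu(m)\|^2 + \tfrac14\tr|\cL^{T_mM}_{\mu(m)}| - \tfrac12\tr|\ad(\mu(m))|$. Making this comparison work requires the local deformation $\mu^t = \mu_{\kt^\alpha} + t\mu_{\kh}$ of the moment map (Subsection \ref{sec deform mu}), which kills the ``off-centralizer'' components of $\mu$ as $t\to 0$ so that $v^{\mu^t}$ looks like $\alpha^M$ to first order; your proposal has no counterpart of this and consequently cannot close the estimate in Lemma \ref{lem est mu alpha}. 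Similarly, your bound on $F_\mu$ by weights of $\pi$ needs the slice argument in Lemma \ref{lem bound L mu t} to reduce $\cL^{\cS}_{\mu}$ (a Lie derivative with a position-dependent $\mu$) to the action of the \emph{constant} $\alpha$ in $L^2(K)_\pi$; a naive weight bound does not account for the position dependence.
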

(Here $[V:\pi]$ denotes the multiplicity of $\pi$ in $V \in \hat R(K)$.) 

The proof of this result, given in Section \ref{sec vanishing}, is inspired by Tian and Zhang's \cite{Zhang98} analytic proof that quantisation commutes with reduction for compact symplectic manifolds. A geometric deformation of the moment map $\mu$ allows us to generalise their estimates from compact symplectic manifolds to noncompact $\Spinc$-manifolds. An interesting aspect of this proof is that a function $d$, which plays a central role in \cite{Paradan14}, emerges in a completely different way.

An important property of the index we define, and the key step in reducing the proof of Theorem \ref{thm [Q,R]=0 intro} to the compact case, is that it is multiplicative in a suitable sense.
Let $M$ and $N$ be  connected, even-dimensional, $K$-equivariant $\Spinc$-manifolds, with spinor bundles $\cS_M \to M$ and $\cS_N \to N$, and moment maps $\mu_M: M \to \kk^*$ and $\mu_N: N \to \kk^*$. Suppose that $\mu_M$ is proper, and that $N$ is \emph{compact}. 
Let $\ind_K(\cS_N)$ be the usual equivariant index of the $\Spinc$-Dirac operator on $N$. 
\begin{theorem}[Multiplicativity] \label{thm mult intro}
We have
\begin{equation} \label{eq mult intro}
\indK(\cS_{M\times N}, \mu_{M\times N}) = \indK(\cS_M, \mu_M) \otimes \ind_K(\cS_N) \quad \in \hat R(K).
\end{equation}
\end{theorem}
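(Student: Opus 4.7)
The plan is to prove Theorem \ref{thm mult intro} by reducing the deformed $\Spinc$-Dirac operator on $M \times N$ to a graded tensor product of deformed operators on the two factors, and then invoking classical multiplicativity of the equivariant index for such tensor products.

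First, I would unpack the product $\Spinc$-structure. The spinor bundle factors as $\cS_{M\times N} \cong \cS_M \boxtimes \cS_N$, a $\Z_2$-graded $K$-equivariant Clifford module, and the $\Spinc$-Dirac operator decomposes as $D_{M\times N} = D_M \hat\otimes 1 + 1 \hat\otimes D_N$ with respect to the grading on $\cS_M$. Kostant's formula, applied to the tensor product connection on the product determinant line bundle, yields $\mu_{M\times N}(m,n) = \mu_M(m) + \mu_N(n)$. Consequently the induced vector field $\vf^{\mu_{M\times N}}$ splits at $(m,n)$ into a $TM$-component $\vf^{\mu_M}(m) + (\mu_N(n))^M(m)$ and a $TN$-component $\vf^{\mu_N}(n) + (\mu_M(m))^N(n)$. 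The last summand in each component is a cross term that prevents an immediate tensor product decomposition of $D_{M\times N} - \ii c(\vf^{\mu_{M\times N}})$.

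Second, I would eliminate the cross terms by a two-step deformation. Consider the family of proper equivariant tamings $\mu_s(m,n) := \mu_M(m) + s\mu_N(n)$ for $s \in [0,1]$; each $\mu_s$ is proper since $N$ is compact, and each defines a deformed index $\indK(\cS_{M\times N}, \mu_s)$. Establish that this index is independent of $s$ by working one isotypical component at a time: by Theorem \ref{thm vanishing intro}, for each $\pi \in \hat K$ the multiplicity $[\indK(\cS_{M\times N}, \mu_s):\pi]$ receives no contribution from the region $\{\|\mu_s\| > C_\pi\}$, so after cutting off outside a large compact set in $M$ (which is possible uniformly in $s$) one is computing indices of genuine Fredholm operators on a compact manifold-with-boundary, where homotopy invariance is standard. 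At $s=0$ the $TM$-cross term disappears; a further fibrewise homotopy on $N$ — for instance an adiabatic rescaling of the $N$-metric, exploiting that the vertical operator at each $m$ is $K$-equivariantly conjugate to the $\Spinc$-Dirac operator on $N$ with taming by $\mu_N$ — eliminates the remaining $TN$-cross term.

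Third, after decoupling, the deformed operator is the graded tensor product $\bigl(D_M - \ii c(\vf^{\mu_M})\bigr) \hat\otimes 1 + 1 \hat\otimes \bigl(D_N - \ii c(\vf^{\mu_N})\bigr)$. Since $N$ is compact, the second factor is a standard deformed Dirac operator with the same equivariant index as $D_N$, namely $\ind_K(\cS_N)$. Applying multiplicativity of the equivariant index for graded tensor products of Fredholm-type operators, carried out isotypically so as to remain inside $\hat R(K)$, yields \eqref{eq mult intro}. The main obstacle is the $s$-invariance in step two: because $\indK(-,-)$ takes values in $\hat R(K)$ rather than in $\Z$, one cannot directly invoke Fredholm deformation invariance, and must instead control each isotypical multiplicity separately using Theorem \ref{thm vanishing intro} to reduce to a compact piece of $M\times N$ where the operator is genuinely Fredholm.
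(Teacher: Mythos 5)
Your overall plan matches the paper's in spirit: factor the spinor bundle and Dirac operator, localise each isotypical multiplicity to a relatively compact region using Theorem~\ref{thm vanishing intro}, then deform $\mu_{M\times N}$ to the pullback $\hat\mu_M$ along the linear path $\mu_s := \mu_M + s\,\mu_N$ and invoke Braverman's multiplicativity for pullback taming maps (Theorem~\ref{thm mult index}) at the end. The paper carries out exactly this scheme in Section~\ref{sec mult}. However, the step you dispose of in one sentence is precisely where the work is, and as written your argument has a genuine gap.

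The problematic assertion is that after cutting off to a large relatively compact $K$-invariant set $U_M \subset M$ ``one is computing indices of genuine Fredholm operators on a compact manifold-with-boundary, where homotopy invariance is standard.'' Neither clause holds. These deformed Dirac operators are not Fredholm on a compact manifold with boundary absent boundary conditions; one must use the metric rescaling of Subsection~\ref{sec index noncomplete} and Braverman's cobordism invariance (Theorem~\ref{thm htp invar}), which in turn requires the map $(m,n,s)\mapsto\mu_s(m,n)$ on $U_M\times N\times[0,1]$ to be a \emph{taming} map, i.e.\ to have a compact set of zeroes. That is exactly what can fail for the linear path: if $(m,n)\in\partial U_M\times N$ and $\vf^{\mu_s}_{(m,n)}=0$, the $TM$-component gives $\|\vf^{\mu_M}_m\| = s\,\|(\mu_N(n))^M_m\|$, and there is no a priori lower bound on $\|\vf^{\mu_M}\|$ near $\partial U_M$ for a general proper moment map, so the zero set can leak to the boundary. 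The paper resolves this with Proposition~\ref{prop mu proper}: before running the homotopy, replace $\mu_M|_{U_M}$ by a map $\tilde\mu_{U_M}$ that agrees with $\mu_M$ near its zeroes (hence gives the same index, by excision), but additionally has $\|\vf^{\tilde\mu_{U_M}}\|$ \emph{proper} on $U_M$. That properness is what makes Lemma~\ref{lem Z phi cpt} go through: any zero of the homotopy forces $\|\vf^{\tilde\mu_{U_M}}_m\|$ to be dominated by the cross term $\|(\mu_N(n))^M_m\|$, which is bounded on $\overline{U_M}\times N$, confining $m$ to a compact subset. This replacement of the moment map is the essential new ingredient, and it is missing from your proposal.

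A secondary point: at $s=0$ the map $\hat\mu_M$ still has the cross term $(\mu_M(m))^N_n$ in its $TN$-component, as you note. You propose to remove it by an adiabatic rescaling of the $N$-metric; the paper instead quotes Theorem~\ref{thm mult index}, which handles pullback taming maps directly (both via Atiyah's product formula for transversally elliptic symbols and via a kernel splitting). Your alternative could perhaps be made rigorous, but as stated it is a sketch, whereas the paper has a ready-made lemma.
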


In the compact symplectic case, one can deduce the decomposition \eqref{eq [Q,R]=0 intro} from the case for the trivial representation, i.e.\  $\lambda = 0$. This is based on a multiplicativity property as in Theorem \ref{thm mult intro}, which is much simpler for compact manifolds.
For noncompact symplectic manifolds and proper moment maps, the main difficulty in the proofs in \cite{Zhang14, Paradan11} that quantisation commutes with reduction is to prove Theorem \ref{thm mult intro} for symplectic manifolds $M$. After that, one applies the case for reduction at the trivial representation as for compact symplectic manifolds.

Analogously to the symplectic case, Theorem \ref{thm mult intro} plays an important role in the proof of Theorem \ref{thm [Q,R]=0 intro}. In this paper, we prove Theorem \ref{thm mult intro}, and use it to obtain a localised expression for the multiplicities $m_{\lambda}$ in \eqref{eq [Q,R]=0 intro 2}. This expression is the same as the one in \cite{Paradan14}, so that Paradan and Vergne's arguments can be applied directly from that point onwards. This leads to a proof of Theorem \ref{thm [Q,R]=0 intro}.

To prove Theorem \ref{thm mult intro} in the general $\Spinc$-setting, we apply a localised version of cobordism invariance of Braverman's index. This is done in Section \ref{sec mult}. Defining a useful notion of cobordism for possibly noncompact manifolds is nontrivial. Braverman succeeded in finding such a definition, and in proving that his index is invariant under this notion of cobordism. We cannot directly apply this cobordism invariance to prove Theorem \ref{thm mult intro}, however, because the vector fields appearing in the arguments do not have compact sets of zeroes. (An essential assumption in Braverman's definition.) But by considering the multiplicity of any fixed irreducible representation in both sides of \eqref{eq mult intro}, we are able to localise the problem to sets where it is possible to construct a cobordism.
This construction involves a procedure to replace a given moment map $\mu$, on an open set $U$ where the vector field $\vf^{\mu}$ has a compact set of zeroes, by another moment map $\tilde \mu$ that is \emph{proper} on $U$, without changing the resulting index.

The proofs in \cite{Zhang14, Paradan11} of Theorem \ref{thm mult intro} in the symplectic case are quite involved. 
While this is also a matter of taste, to the authors
the cobordism argument in Section \ref{sec mult} seems simpler. In addition, 
 it applies directly to the $\Spinc$-case. In \cite{HochsSong15b}, we apply analogous arguments to give a short proof in the sympectic case.

\subsection*{Acknowledgements}

The first author is grateful to Paul-\'Emile Paradan for his hospitality during a visit to Montpellier in May 2014, and for his explanations of
 his work with Vergne. He would also like to thank Eckhard Meinrenken for inviting him to visit the University of Toronto in August 2014. He was supported by Marie Curie fellowship PIOF-GA-2011-299300 from the European Union. The second author would like to thank Eckhard Meinrenken and Nigel Higson for many beneficial discussions.  The authors are grateful to the referees for some useful suggestions and corrections, and in particular for suggesting a shorter proof of Lemma \ref{lem Z phi cpt}.

\subsection*{Notation and conventions}

All manifolds, maps, vector bundles and group actions are tacitly assumed to be smooth, unless stated otherwise. 

The space of vector fields on a manifold $M$ will be denoted by $\cX(M)$, and the space of $k$-forms by $\Omega^k(M)$. 
If $E \to M$ is a smooth vector bundle, then $\Gamma^{\infty}(E)$ is its space of smooth sections, and $\Gamma^{\infty}_c(E) \subset \Gamma^{\infty}(E)$ is the subspace of compactly supported sections.


\section{An analytic index}

In \cite{Braverman02}, Braverman defined an equivariant  index of Dirac-type operators for compact Lie groups acting on possibly noncompact manifolds. In Section \ref{sec Spinc Dirac}, we will generalise this index in the case of $\Spinc$-Dirac operators, and state its properties that we will prove in the rest of this paper. In the current section, we review the material from \cite{Braverman02} we will use.

Throughout this paper, $K$ will be a compact, connected Lie group, with Lie algebra $\kk$, acting on a manifold $M$. 
We will consider a $K$-invariant Riemannian metric $g$ on $M$. In this section, we assume that $M$ is complete with respect to this metric. 

\subsection{Deformed Dirac operators} \label{sec deformed Dirac}

 Let $\cE = \cE^+ \oplus \cE^- \to M$ be a $\Z_2$-graded,  complex vector bundle, equipped with a Hermitian metric. Let 
 \[
 c: TM \to \End(\cE)
 \]
be a vector bundle homomorphism, whose image lies in the skew-adjoint, odd endomorphisms, and such that for all $v \in TM$,
\[
c(v)^2 = -\|v\|^2.
\]
Then $\cE$ is called a \myemph{Clifford module} over $M$, and $c$ is called the \myemph{Clifford action}.

A \myemph{Clifford connection} is a Hermitian connection $\nabla^{\cE}$ on $\cE$ that preserves the grading on $\cE$, such that for all vector fields $v, w \in \cX(M)$,
\[
[\nabla^{\cE}_v, c(w)] = c(\nabla^{TM}_v w),
\]
where $\nabla^{TM}$ is the Levi--Civita connection on $TM$. We will identify $TM \cong T^*M$ via the Riemannian metric. Then the Clifford action $c$ defines a map
\[
c: \Omega^1(M; \cE) \to \Gamma^{\infty}(\cE). 
\]
The \myemph{Dirac operator} $D$ associated to a Clifford connection $\nabla^{\cE}$ is defined as the composition
\begin{equation} \label{eq def Dirac}
D: \Gamma^{\infty}(\cE) \xrightarrow{\nabla^{\cE}} \Omega^1(M; \cE) \xrightarrow{c} \Gamma^{\infty}(\cE).
\end{equation}
In terms of a local orthonormal frame $\{e_{1}, \ldots, e_{\dim M}\}$, one has
\begin{equation} \label{eq Dirac local}
D = \sum_{j=1}^{\dim M} c(e_j) \nabla^{\cE}_{e_j}.
\end{equation}
This operator interchanges sections of $\cE^+$ and $\cE^-$. We will denote the restriction of $D$ to $\Gamma^{\infty}(\cE^{\pm})$ by  $D^{\pm}$. 

Suppose that $\cE$ is a $K$-equivariant vector  bundle, the action by $K$ preserves the grading on $\cE$,  and 
for all $k \in K$, $m \in M$, $v \in T_mM$ and $e \in \cE_m$ we have\footnote{In fact, this condition implies that the action by $K$ preserves the Riemannian metric.}
\[
k\cdot  c(v)e = c(k\cdot v) k\cdot e.
\]
Then $\cE$ is called a \myemph{$K$-equivariant Clifford module} over $M$. 
Let $\nabla^{\cE}$ be a $K$-invariant Clifford connection on $\cE$. Then the Dirac operator $D$ associated to $\nabla^{\cE}$ is $K$-equivariant. 

If $M$ is compact, the kernel of $D$ is finite-dimensional. Then one has the equivariant index of $D^+$, 
\begin{equation} \label{eq index cpt}
\ind_K(\cE) := \ind_K(D^+) = [\ker D^+] - [\ker D^-] \quad \in R(K),
\end{equation}
where $R(K)$ is the representation ring of $K$. Braverman defined an equivariant index without assuming $M$ to be compact.

\subsection{Braverman's index}

To define an index of Dirac operators on noncompact manifolds, Braverman used a smooth, $K$-equivariant map
\[
\varphi: M \to \kk.
\]
Such a map induces a vector field $\vf^{\varphi} \in \cX(M)$, defined by
\begin{equation} \label{eq def V phi}
\vf^{\varphi}_m = \ddt \exp(-t\varphi(m))m.
\end{equation}
\begin{definition} \label{def deformed Dirac}
The \myemph{Dirac operator deformed by $\varphi$} is the operator
\[
D_{\varphi} := D - \sqrt{-1}c(\vf^{\varphi})
\]
on $\Gamma^{\infty}(\cE)$.
\end{definition}
\begin{remark}
The vector field $\vf^{\varphi}$ equals minus the vector field used by Braverman (see (2.2) in \cite{Braverman02}). This leads to the minus sign in the definition of the deformed Dirac operator, which is not present in (2.6) in \cite{Braverman02}. The minus sign in the definition of vector fields induced by Lie algebra elements, used in the definition of
 $\vf^{\varphi}$, is needed to make $\Spinc$-moment maps well-defined in Subsection \ref{sec moment maps}.
\end{remark}

We will denote the set of zeroes of $\vf^{\varphi}$ by $Z_{\varphi}$. It is important for the definition of Braverman's index that $Z_{\varphi}$ is compact. In fact, a large part of the work in this paper is done to handle cases where $Z_{\varphi}$ may be noncompact.
\begin{definition}
A \myemph{taming map} is an equivariant map $\varphi: M \to \kk$, with the property that $Z_{\varphi}$ is compact. \end{definition}
Let $\varphi$ be a taming map.

Another ingredient used in the definition of the index is a $K$-invariant, nonnegative smooth function $f \in C^{\infty}(M)^K$, that grows fast enough. More precisely, $f$ is required to satisfy
\[
\lim_{m \to \infty} \frac{f(m) \|\vf^{\varphi}_m\|^2}{\|d_m f\| \|\vf^{\varphi}_m\| + f(m)\zeta(m) + 1} = \infty,
\]
where $\zeta$ is a function on $M$, defined in (2.4) in \cite{Braverman02}. By Lemma 2.7 in \cite{Braverman02},  a function $f$ with these properties always exists. Fix such a function $f$. 

Let $\ker^{L^2}(D^{\pm}_{f\varphi})$ be the kernel of the deformed Dirac operator $D^{\pm}_{f\varphi}$, intersected with the space of $L^2$-sections of $\cE$, with respect to the Riemannian density on $M$. The definition of Braverman's index is based on Theorem 2.9 in \cite{Braverman02}, which is the following statement.
\begin{theorem} \label{thm Br index}
Any irreducible representation $\pi$ of $K$  has finite multiplicity $m^{\pm}_{\pi}$ in $\ker^{L^2}(D^{\pm}_{f\varphi})$. The integers $m^{+}_{\pi} - m^-_{\pi}$ do not depend on the choice of the function $f$ and the connection $\nabla^{\cE}$. 
\end{theorem}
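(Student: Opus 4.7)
The plan is to follow Braverman's strategy from \cite{Braverman02}. First I would expand the square of the deformed Dirac operator via a Bochner--Weitzenb\"ock-type identity of the form
\[
D_{f\varphi}^2 = D^2 + f^2 \|\vf^{\varphi}\|^2 + A,
\]
where $A$ collects all mixed and lower-order terms: the anticommutator $-\ii\bigl(D\, c(f\vf^{\varphi}) + c(f\vf^{\varphi})\, D\bigr)$ expands via the Clifford relation and \eqref{eq Dirac local} into contributions of order $\|df\|\,\|\vf^{\varphi}\|$ together with a zero-order piece controlled by $\cL_{\varphi} - \nabla^{\cE}_{\vf^{\varphi}}$ and the covariant derivatives of $\vf^{\varphi}$; this zero-order piece is essentially $f\zeta$ in Braverman's notation. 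The growth condition on $f$ is engineered precisely so that $f^2\|\vf^{\varphi}\|^2$ dominates $|A|$ pointwise outside a sufficiently large $K$-invariant compact neighborhood of $Z_{\varphi}$.

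For the finite-multiplicity statement, I would fix an irreducible $\pi \in \hat K$ and restrict to the $\pi$-isotypic subspace of $L^2(\cE)$, on which the $K$-Casimir acts as a positive scalar; after passing to this subspace, the portion of $A$ that is bounded by the Casimir is absorbed uniformly into $\|Ds\|^2$. Combined with the pointwise estimate above, this yields constants $R>0$ and $c_{\pi}>0$ such that
\[
\langle D_{f\varphi}^2 s, s\rangle \geq c_{\pi} \|s\|^2
\]
for every smooth $\pi$-isotypic $s$ with support outside the $R$-neighborhood of $Z_{\varphi}$. Because $Z_{\varphi}$ is compact and $D_{f\varphi}$ is elliptic, a standard cutoff-plus-Rellich argument then bounds the dimension of the $\pi$-isotypic part of $\ker^{L^2}D^{\pm}_{f\varphi}$.

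For independence, I would connect two admissible choices $(f_0,\nabla_0^{\cE})$ and $(f_1,\nabla_1^{\cE})$ by a smooth path $(f_t,\nabla_t^{\cE})$, $t\in[0,1]$, arranged so that $f_t$ satisfies Braverman's growth inequality uniformly in $t$ (if necessary after rescaling $f_0$ and $f_1$ upward by a common large factor so that the convex combination still qualifies). The lower bound above then holds uniformly in $t$ on each $\pi$-isotypic component, so the restriction of $D_{f_t\varphi,t}$ to that component is a norm-resolvent-continuous family of self-adjoint Fredholm operators with a uniform gap in the essential spectrum; the standard spectral-flow argument shows that the index $m_{\pi}^+ - m_{\pi}^-$ on that component is locally constant, hence constant.

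The main obstacle will be the quantitative lower bound underlying step two: one must dissect the Weitzenb\"ock remainder $A$ carefully enough to verify that every term is dominated either by $f^2\|\vf^{\varphi}\|^2$ via the defining condition on $f$ or by the Casimir eigenvalue of $\pi$ via the $\|Ds\|^2$ term. The definition of $\zeta$ in \cite[(2.4)]{Braverman02} and the growth inequality from \cite[Lem.~2.7]{Braverman02} are designed for exactly this purpose. Once the estimate is in place, both halves of the theorem follow by routine functional-analytic arguments.
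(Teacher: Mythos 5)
The paper does not prove this statement --- it is imported verbatim as Theorem~2.9 of \cite{Braverman02}, as the surrounding text makes explicit. There is therefore no in-paper argument to compare your sketch against; the expected move is simply to cite Braverman.

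Taken as a reconstruction of Braverman's own proof, your sketch is reasonable in outline, and you correctly single out the function $\zeta$ and Lemma~2.7 of \cite{Braverman02} as the load-bearing technical devices. Two imprecisions are worth flagging. First, the phrase ``the portion of $A$ that is bounded by the Casimir is absorbed uniformly into $\|Ds\|^2$'' conflates two different mechanisms: the pointwise Lie-derivative term (of the form $f\cL_{\varphi}$) is bounded on the $\pi$-isotypic component by a constant depending on $\pi$ times $f\|\varphi\|$, and since $\|\varphi\|$ is in general unbounded, it is \emph{not} absorbed into $\|Ds\|^2$ but is dominated by $f^2\|\vf^{\varphi}\|^2$ precisely because $\zeta$ contains a $\|\varphi\|$-type contribution; it is the genuine first-order piece of the form $f\nabla^{\cE}_{\vf^{\varphi}}$ that can be handled by Cauchy--Schwarz against $D^2$. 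Second, your independence argument via spectral flow along a linear path $(f_t, \nabla^{\cE}_t)$ is plausible but is stated without supporting estimates: the norm-resolvent continuity of the family, the uniform gap in the essential spectrum, and --- most delicately --- the admissibility of $f_t$ for the interpolated connection $\nabla^{\cE}_t$ (recall that $\zeta$ itself depends on $\nabla^{\cE}$) all require verification. The rescaling you propose does not address this last dependence and is in any case unnecessary if one shows directly that a convex combination of admissible functions is admissible with the same exceptional compact set; whether this route coincides with Braverman's actual argument for Theorem~2.9 is something one would have to check against \cite{Braverman02}.
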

This allows one to define an equivariant index for the pair $(\cE, \varphi)$.
\begin{definition} \label{def index Br}
The \myemph{equivariant $L^2$-index} of the pair $(\cE, \varphi)$ is
\[
\ind^{L^2}_K(\cE, \varphi) := \sum_{\pi \in \hat K} (m^+_{\pi} - m^-_{\pi}) \pi \quad \in \hat R(K).
\]
\end{definition}
In this definition, $\hat K$ is the unitary dual of $K$, the numbers $m^{\pm}_{\pi}$ are as in Theorem \ref{thm Br index}, and $\hat R(K)$ is the Grothendieck group of the semigroup of representations of $K$ in which all irreducible representations occur with finite multiplicities, which is  equal to $\Hom_{\Z}(R(K), \Z)$. 

The index of Definition \ref{def index Br} depends on $\varphi$ in general, but see 
Proposition \ref{prop excision}
below, and Lemma 3.16 in \cite{Braverman02}.

\subsection{An index theorem} \label{sec index thm}

Braverman's analytic index of Definition \ref{def index Br} equals a topological index used in the statement of Vergne's conjecture \cite{Vergne06}, and in \cite{Paradan01, Paradan11, Paradan14}. It also equals an analytic index used in \cite{Zhang14} in the context of geometric quantisation.

The topological index is defined as follows. Let $\cE$ and $\varphi$ be as in Subsection \ref{sec deformed Dirac}. Consider the symbol $\sigma_{\varphi}: TM \to \End(\cE)$ defined by
\begin{equation} \label{eq deformed Thom}
\sigma_{\varphi}(v) := \sqrt{-1}c(v - \vf^{\varphi}_m),
\end{equation}
for all $m \in M$ and $v \in T_mM$. This symbol defines a class
\[
[\sigma_{\varphi}] \in K^0(T_KM),
\]
where $T_KM$ is the space of tangent vectors to $M$ orthogonal to $K$-orbits. By embedding a $K$-invariant, relatively compact
neighbourhood of $Z_{\varphi}$ into a compact manifold, one can apply Atiyah's index of transversally elliptic symbols \cite{Atiyah74} to obtain
\begin{equation} \label{eq top index}
\ind_K^{\topp}[\sigma_{\varphi}] \in \hat R(K).
\end{equation}
Theorem 5.5 in \cite{Braverman02} states that
\begin{equation} \label{eq index thm}
\ind_K^{\topp}[\sigma_{\varphi}] = \indK(\cS, \varphi). 
\end{equation}

In Section 1.4 of \cite{Zhang14}, Ma and Zhang show that their APS-style definition of geometric quantisation equals the above two indices as well (in cases where $Z_{\varphi}$ is compact.)

Braverman's index theorem \eqref{eq index thm} implies that the analytic index of Definition \ref{def index Br} has all properties of the topological index \eqref{eq top index}. One of these is a multiplicativity property. Let $N$ be a compact, connected Riemannian manifold, with a $K$-equivariant Clifford module $\cE_N \to N$. Denote the Clifford module on $M$ by $\cE_M$ for clarity, and let 
 $\cE_{M\times N} = \cE_M \boxtimes \cE_N \to M \times N$ be the product Clifford module. In terms of the Clifford actions 
$c_M: TM \to \End(\cE_M)$ and $c_N: TN \to \End(\cE_N)$, the Clifford action $c_{M\times N}: T(M \times N) \to \End(\cE_{M\times N})$ is defined by
\[
c_{M\times N}(v, w) = c_M(v) \otimes 1_{\cE_N} + \gamma_M \otimes c_N(w),
\]
for $v \in TM$ and $w \in TN$, where $\gamma_M$ is the grading operator on $\cE_M$.  Let $\hat \varphi: M \times N \to \kk^*$ be the pullback of the taming map $\varphi$ along the projection map to $M$. 
 \begin{theorem} \label{thm mult index}
We have
\[
\indK(\cE_{M\times N}, \hat \varphi) = \indK(\cS_M, \varphi) \otimes \ind_K(\cE_N) \quad \in \hat R(K). 
\]
\end{theorem}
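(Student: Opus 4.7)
The plan is to pass to the topological side via Braverman's index theorem \eqref{eq index thm} and exploit the multiplicative structure of transversally elliptic symbols. First I would verify that $\hat\varphi$ is itself a taming map on $M\times N$. Since $\hat\varphi(m,n)=\varphi(m)$, the induced vector field is
\[
\vf^{\hat\varphi}_{(m,n)} = (\vf^{\varphi}_m,\ \vf^{\varphi(m)}_n),
\]
where $\vf^{\varphi(m)}_n$ is the fundamental vector field on $N$ generated by $\varphi(m)\in\kk$. Thus $(m,n)\in Z_{\hat\varphi}$ forces $m\in Z_{\varphi}$ and $\varphi(m)\in\kk_n$, which gives $Z_{\hat\varphi}\subset Z_{\varphi}\times N$, a compact set. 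Hence $\indK(\cE_{M\times N},\hat\varphi)$ and the symbol class $[\sigma_{\hat\varphi}]\in K^0(T_K(M\times N))$ are both well-defined.

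Next, applying \eqref{eq index thm} on $M\times N$ reduces the left-hand side to $\ind_K^{\topp}[\sigma_{\hat\varphi}]$; applying it to $M$ gives $\indK(\cE_M,\varphi)=\ind_K^{\topp}[\sigma_{\varphi}]$, and on the compact manifold $N$ the usual equivariant index equals $\ind_K[\sigma^N]$, where $\sigma^N(w)=\sqrt{-1}c_N(w)$ is the principal symbol of the Dirac operator on $N$. The formula for $c_{M\times N}$ gives
\[
\sigma_{\hat\varphi}(v,w) = \sqrt{-1}\bigl[c_M(v-\vf^{\varphi}_m)\otimes 1_{\cE_N} + \gamma_M\otimes c_N(w - \vf^{\varphi(m)}_n)\bigr],
\]
which I would identify, via a linear homotopy $t\mapsto \sigma_t$ that replaces $\vf^{\varphi(m)}_n$ by $t\,\vf^{\varphi(m)}_n$ for $t\in[0,1]$, with the Atiyah sharp product $\sigma_{\varphi}\sharp\sigma^N$ at $t=0$. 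A direct computation, using that $\gamma_M$ anticommutes with $c_M$ and $\gamma_M^2=1$, shows
\[
\sigma_t(v,w)^2 = \bigl(\|v-\vf^{\varphi}_m\|^2 + t^2\|\vf^{\varphi(m)}_n - w/t\|^2\cdot\text{(for }t>0\text{)}\bigr)\cdot\mathrm{id},
\]
so the characteristic set of $\sigma_t$ restricted to $T_K(M\times N)$ remains inside the compact set $Z_{\varphi}\times N$ throughout the homotopy. This is because $\vf^{\varphi}_m$ and $\vf^{\varphi(m)}_n$ are tangent to $K$-orbits, so transversality forces them to vanish on the characteristic set. Hence $[\sigma_{\hat\varphi}]=[\sigma_{\varphi}\sharp\sigma^N]$ in $K^0(T_K(M\times N))$.

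Finally, I would invoke the multiplicativity of the transversally elliptic topological index for sharp products with a compact elliptic factor (\cite{Atiyah74}), which yields
\[
\ind_K^{\topp}[\sigma_{\varphi}\sharp\sigma^N] = \ind_K^{\topp}[\sigma_{\varphi}]\otimes \ind_K[\sigma^N] = \indK(\cE_M,\varphi)\otimes \ind_K(\cE_N).
\]
Chaining these equalities gives the theorem. The step I expect to be most delicate is verifying that the homotopy $\sigma_t$ really does define a continuous family of transversally elliptic symbols with characteristic set contained in a single compact subset of $T_K(M\times N)$, so that the K-theoretic identification $[\sigma_{\hat\varphi}]=[\sigma_{\varphi}\sharp\sigma^N]$ is legitimate; the remaining ingredients are then direct consequences of Braverman's index theorem and Atiyah's multiplicativity for transversally elliptic operators.
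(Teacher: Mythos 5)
Your proof is correct and follows essentially the same route as the paper's first (topological) argument: pass to the topological index via Braverman's index theorem \eqref{eq index thm}, identify $[\sigma_{\hat\varphi}]$ with the Atiyah sharp product $[\sigma_{\varphi}\sharp\sigma^N]$ in $K^0(T_K(M\times N))$ (you spell out the homotopy $t\mapsto t\,\vf^{\varphi(m)}_n$ and the compactness of the characteristic set that the paper treats as immediate), and then invoke the multiplicativity theorem of \cite{Atiyah74}. The paper also records a second, purely analytic argument that you do not use: writing $D_{M\times N}=D_M\otimes 1_{\cE_N}+\gamma_M\otimes D_N$, one finds $\bigl(D_{M\times N}-\ii c(\hat f\vf^{\hat\varphi})\bigr)^2=\bigl(D_M-\ii c(f\vf^{\varphi})\bigr)^2\otimes 1_{\cE_N}+1_{\cE_M}\otimes D_N^2$, and since both squared operators are nonnegative the $L^2$-kernel factors as $\ker^{L^2}\bigl(D_M-\ii c(f\vf^{\varphi})\bigr)\otimes\ker(D_N)$, giving the result directly without passing through $K$-theory.
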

\begin{proof}
Consider  a function $f \in C^{\infty}(M)$, with pullback $\hat f$ to $M \times N$. Let
$\sigma_{f\varphi}$ be the Clifford action on $M$ deformed by $f \varphi$ as in \eqref{eq deformed Thom}, and let 
 $\sigma_{\hat f \hat \varphi}$ be the Clifford action  on $M \times N$ deformed by $\hat f \hat \varphi$. Then
 $\sigma_{\hat f \hat \varphi}$
  equals the product, in the sense of  Theorem 3.5 in \cite{Atiyah74}, of the  $\sigma_{f\varphi}$  and the Clifford action on $N$. Therefore, Theorem 3.5 in \cite{Atiyah74} implies the claim.

Alternatively, one can decompose the $\Spinc$-Dirac operator $D_{M \times N}$ on $M \times N$ into the Dirac operators $D_M$ on $M$ and $D_N$ on $N$ as
\[
D_{M\times N} = D_M \otimes 1_{\cE_N} + \gamma_M \otimes D_N,
\]
and deduce from this that
\[
\left( D_{M\times N} - \ii c(\hat f \vf^{\hat \varphi}) \right)^2 = \bigl(D_M - \sqrt{-1}c(f \varphi) \bigr)^2 \otimes 1_{\cE_N} + 
	1_{\cE_M} \otimes D_N^2.
\]
Since all squared operators are nonnegative, it follows that
 the $L^2$-kernel of $D_{M \times N} - \sqrt{-1}c(\hat f \vf^{\hat \varphi})$ equals
\[
\ker^{L^2}\left( D_{M\times N} - \ii c(\hat f \vf^{\hat \varphi}) \right) = 
\ker^{L^2}\bigl(D_M - \sqrt{-1}c(f \varphi) \bigr) \otimes \ker(D_N).
\]
This includes the appropriate gradings, so the claim follows. 
\end{proof}

We will define an equivariant index of $\Spinc$-Dirac operators in Section \ref{sec Spinc Dirac}, based on Braverman's index. In the last step of the proof of our main result, we will use facts from \cite{Paradan14} about the topological index defined in \eqref{eq top index}. There, we will tacitly use the index theorem \eqref{eq index thm}.

\subsection{Cobordism invariance}

To define a meaningful notion of cobordism for noncompact manifolds, one has to include more information than just the manifolds themselves. (Otherwise any manifold $M$ is cobordant to the empty set, through the cobordism $M \times [0, 1[$.) Braverman defined a notion of cobordism that includes taming maps.
 A fundamental, and very useful, property of his index is invariance under this version of cobordism. This will play a key role in our arguments in Subsection \ref{sec mult invar}.

Braverman's cobordism is defined in Definitions 3.2 and 3.5 in \cite{Braverman02}. Cobordism invariance of his index is Theorem 3.7 in \cite{Braverman02}.
We will only need a special case of this cobordism invariance, where the two manifolds and Clifford modules in question are equal. This involves the notion of a  homotopy of taming maps.  
\begin{definition} \label{def htp}
Let $\varphi_1, \varphi_2: M \to \kk$ be two taming maps. A \myemph{homotopy of taming maps} between $\varphi_1$ and $\varphi_2$ is a taming map $\varphi: M \times [0,1] \to \kk$, such that, for some $\varepsilon \specialin ]0, \frac{1}{2}[$,
\[
\varphi|_{[0, \varepsilon[} = \varphi_1 \otimes 1_{[0, \varepsilon[},
\]
and
\[
\varphi|_{]1-\varepsilon, 1]} = \varphi_2 \otimes 1_{]1-\varepsilon, 1]}.
\]
If such a homotopy exists, then $\varphi_1$ and $\varphi_2$ are called \myemph{homotopic}.
\end{definition}
\begin{theorem}[Homotopy invariance] \label{thm htp invar}
If two taming maps $\varphi_1$ and $\varphi_2$ are homotopic, then
\[
\indK(\cS, \varphi_1) = \indK(\cS, \varphi_2). 
\]
\end{theorem}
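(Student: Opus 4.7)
The plan is to realize the homotopy $\varphi$ as a Braverman cobordism between the triples $(M, \cS, \varphi_1)$ and $(M, \cS, \varphi_2)$, and then quote Braverman's cobordism invariance (Theorem~3.7 in \cite{Braverman02}) directly. This is the natural strategy, since a homotopy of taming maps is designed to fit exactly into Braverman's definitions, and the theorem we are proving is the special-case of cobordism invariance in which the underlying manifold and Clifford module do not change along the cobordism.

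Concretely, I would form the product $W := M \times [0,1]$ with the $K$-invariant product Riemannian metric $g_M \oplus dt^2$, with $K$ acting trivially on $[0,1]$. Since $TW = TM \oplus \R \partial_t$, one obtains a natural $K$-equivariant Clifford module on $W$ by pulling back $\cS \to M$ along the projection and extending the Clifford action so that $\partial_t$ acts by $\sqrt{-1}\gamma_M$, where $\gamma_M$ is the grading operator on $\cS$; the pullback Clifford connection turns this into a Clifford connection on $W$. The homotopy $\varphi : W \to \kk$ is then taken as the map used to deform the resulting Dirac operator on $W$. Because the $K$-action on the $[0,1]$-factor is trivial, the induced vector field $\vf^\varphi$ on $W$ lies entirely in $TM$ at each point $(m,t)$ and is $\vf^{\varphi(\,\cdot\,,t)}_m$; so $Z_\varphi$ is exactly the zero set on $W$ of the vector field induced by $\varphi$, hence compact by the assumption that $\varphi$ is a taming map. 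Near $t=0$ and $t=1$ the product form of Definition~\ref{def htp} guarantees the boundary compatibility required by Braverman's cobordism: $(W,\cS_W,\varphi)$ restricts to $(M,\cS,\varphi_1)$ on $M \times [0,\varepsilon[$ and to $(M,\cS,\varphi_2)$ on $M \times ]1-\varepsilon,1]$, up to the standard identifications used in \cite{Braverman02}.

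With the setup in place, the proof reduces to a single application of Theorem~3.7 in \cite{Braverman02}, which yields $\indK(\cS,\varphi_1) = \indK(\cS,\varphi_2)$. I expect the only real obstacle to be bookkeeping: matching Braverman's conventions for the induced Clifford structure on a boundary (in particular the sign from contracting $\partial_t$ into the Clifford action and the induced orientation on $M \times \{0\}$ versus $M \times \{1\}$), so that the two boundary components of $W$ are identified with $M$ and $-M$ in the correct way and the signs of the two boundary indices come out opposite, as is needed for Braverman's cobordism invariance to give an equality of indices rather than a vanishing statement.
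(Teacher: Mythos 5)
Your proposal is correct and takes essentially the same route as the paper: you realize the homotopy of taming maps as a Braverman cobordism on $M\times[0,1]$ and invoke Theorem 3.7 in \cite{Braverman02}. The paper's proof is a two-line version of exactly this; your write-up simply makes explicit the product Clifford structure and the boundary identifications that the paper leaves to the reader.
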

\begin{proof}
A homotopy of taming maps defines a cobordism in the sense of Definition 3.5 in \cite{Braverman02}. Therefore, the claim follows from Theorem 3.7 in \cite{Braverman02}.
\end{proof}

\begin{remark}
In Definition \ref{def htp}, it is essential that the map $\varphi$ is taming, i.e.\ $Z_{\varphi} = 0$. For example, the linear path between two taming maps need not be taming itself, so that certainly not all taming maps are homotopic. A large part of the work in Section \ref{sec mult} is to show that a certain path between two taming maps is in fact taming itself, so that it defines a homotopy of taming maps.
\end{remark}

While the only kind of cobordisms we will use directly are homotopies of taming maps, we will also need some consequences of cobordism invariance of Braverman's index in the general sense of Section 3 of \cite{Braverman02}. One of these
is 
%
%
the following vanishing result.
\begin{lemma} \label{lem vanishing}
If $Z_{\varphi} = \emptyset$, then
\[
\ind_{K}^{L^2}(\cE, \varphi) = 0.
\]
\end{lemma}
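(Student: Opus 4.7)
The plan is to realise $(M, \cE, \varphi)$ as a boundary of a tamed manifold in Braverman's sense, so that cobordism invariance (Theorem 3.7 in \cite{Braverman02}) immediately forces its index to vanish. This works specifically because $Z_{\varphi} = \emptyset$ lets us cross with a half-line while keeping the zero set of the induced vector field compact (indeed empty).

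Concretely, I would take $W := M \times [0, \infty)$ equipped with the product Riemannian metric, so that $W$ is complete (since $M$ is) and $\partial W = M \times \{0\}$ is canonically identified with $M$. Let $\pi_M \colon W \to M$ denote the projection. Equip $W$ with the pullback Clifford module $\widetilde \cE := \pi_M^* \cE$ and with the pulled-back Clifford connection, with the Clifford action extended over the extra dimension in the standard way so that it restricts to the original Clifford structure on $\partial W \cong M$. Then set $\widetilde \varphi := \pi_M^* \varphi \colon W \to \kk$. This map is $K$-equivariant, and because $K$ acts trivially on the $[0, \infty)$ factor, the induced vector field satisfies $\vf^{\widetilde\varphi}_{(m,t)} = (\vf^{\varphi}_m, 0)$; consequently
\[
Z_{\widetilde\varphi} \;=\; Z_{\varphi} \times [0, \infty) \;=\; \emptyset,
\]
which is trivially compact. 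Hence $\widetilde\varphi$ is a taming map on $W$, and the triple $(W, \widetilde \cE, \widetilde\varphi)$ is a tamed cobordism, in the sense of Definitions 3.2 and 3.5 of \cite{Braverman02}, between $(M, \cE, \varphi)$ and the empty manifold. Cobordism invariance then yields
\[
\indK(\cE, \varphi) \;=\; \indK(\emptyset) \;=\; 0.
\]

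The only step that really needs care is arranging the Clifford structure on $W$ so that it matches Braverman's boundary conventions under the identification $\partial W \cong M$; this is the standard mapping-cylinder / product-Clifford construction and is not a genuine obstacle. As an aside, the same vanishing can also be seen directly from the index theorem \eqref{eq index thm}, since if $Z_\varphi = \emptyset$ one may choose the relatively compact neighbourhood of $Z_\varphi$ in the definition of $\ind_K^{\topp}[\sigma_\varphi]$ to be empty, but the cobordism argument fits better with the logical flow of the paper.
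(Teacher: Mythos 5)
Your proof is correct and coincides with the argument underlying the paper's one-line proof, which simply cites Lemma 3.12 of \cite{Braverman02}; Braverman proves that lemma by exactly the cobordism $M \times [0,\infty)$ with pulled-back Clifford data and taming map that you describe, so you have faithfully reconstructed the cited argument.
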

\begin{proof}
See Lemma 3.12 in \cite{Braverman02}.
\end{proof}

\subsection{Non-complete manifolds} \label{sec index noncomplete}

In our localisation arguments, we will often consider an extension of the index of Definition \ref{def index Br} to non-complete manifolds. This can be defined using the following arguments, analogous to those in Section 4.2 of \cite{Braverman02}. 

First, let $(U, g)$ be a Riemannian manifold, equipped with an isometric action by $K$, a $K$-equivariant Clifford module $\cE \to U$, and a taming map $\varphi$. Suppose there is a $K$-invariant neighbourhood  $V$ of $Z_{\varphi}$, and a $K$-invariant, positive function $\chi \in C^{\infty}(U)^K$, such that $\chi|_V \equiv 1$, and $U$ is complete in the Riemannian metric
\[
g^{\chi} := {\chi^2} g.
\]
If $U$ is a $K$-invariant, relatively compact subset of a Riemannian manifold with an isometric action by $K$, and $\partial U$ is a smooth hypersurface, such a function can be constructed as in Section 4.2 of \cite{Braverman02}. 

Then  $\cE$ becomes a $K$-equivariant Clifford module over $U$, with respect to the Clifford action
\[
\chi c: TU\to \End(\cE).
\]
This allows us to define the index
\begin{equation} \label{eq index g chi}
\indK(\cE, \varphi, g^{\chi}) \quad \in \hat R(K),
\end{equation}
where the Riemannian metric was added to the notation to emphasise which one is used. 

Now let $M$, $\cE$ and $\varphi$ be as before. In particular, $M$ is complete. Using cobordism invariance of Braverman's index, one obtains the following additivity and excision properties of the index.
\begin{proposition}[Additivity] \label{prop additivity}
Let $\Sigma \subset M$ be a relatively compact, $K$-invariant, smooth hypersurface, on which $\vf^{\varphi}$ does not vanish. Suppose $U_1, U_2 \subset M$ are disjoint $K$-invariant open subsets such that
\[
M \setminus \Sigma = U_1 \cup U_2.
\]
Then
\[
\indK(\cE, \varphi) = \indK(\cE|_{U_1}, \varphi|_{U_1}, g|_{U_1}^{\chi_1}) + \indK(\cE|_{U_2}, \varphi|_{U_2}, g|_{U_2}^{\chi_2}), 
\]
where, for $j = 1, 2$, $\chi_j$ is a function as above, so that $\chi_j \equiv 1$ in a neighbourhood of ${U_j \cap Z_{\varphi}} $, and $U_j$ is complete in the metric $, g|_{U_j}^{\chi_j}$.
\end{proposition}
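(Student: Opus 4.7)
The plan is to reduce the additivity to Braverman's cobordism invariance (Theorem 3.7 of \cite{Braverman02}) by constructing an explicit cobordism between $(M, \cE, \varphi)$ with the original metric $g$, and the disjoint union of $(U_1, \cE|_{U_1}, \varphi|_{U_1})$ with metric $g|_{U_1}^{\chi_1}$ and $(U_2, \cE|_{U_2}, \varphi|_{U_2})$ with metric $g|_{U_2}^{\chi_2}$.

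First I would show that $Z_{\varphi}$ lies away from $\Sigma$. Since $\Sigma$ is relatively compact and $\vf^{\varphi}$ is nowhere zero on $\Sigma$, a compactness argument produces a $K$-invariant open neighbourhood $W$ of $\Sigma$ disjoint from $Z_{\varphi}$. Hence $Z_{\varphi} = (Z_{\varphi} \cap U_1) \sqcup (Z_{\varphi} \cap U_2)$, each piece compact, so $\varphi|_{U_j}$ is a taming map on $U_j$ with $\chi_j$ equal to $1$ near $Z_{\varphi} \cap U_j$, as in Subsection \ref{sec index noncomplete}.

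Next I would build the cobordism. Consider the cylinder $M \times [0,1]$, pull back $\cE$ and $\varphi$ from $M$, and equip it with a $K$-invariant metric that agrees with $g + dt^2$ near $t=0$ and, near $t=1$, stretches the $M$-direction by a conformal factor $\chi(m,t)$ that blows up on $\Sigma$ as $t \to 1$ and equals $\chi_j(m)$ on $U_j$ near $t=1$. At $t=1$ the manifold falls apart into two complete pieces isometric to $(U_j, g|_{U_j}^{\chi_j})$. Because $\vf^{\varphi}$ is nowhere zero in a neighbourhood of $\Sigma$ (and is defined pointwise by the group action, independently of the metric), the zero set of the induced vector field on the cobordism is a compact subset of the interior, so this satisfies Braverman's cobordism axioms from Definitions 3.2 and 3.5 of \cite{Braverman02}. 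Theorem 3.7 of \cite{Braverman02} then delivers the additivity identity. Alternatively, the same construction may be phrased as a homotopy of taming maps on the stretched $M$, bringing Theorem \ref{thm htp invar} into play.

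The main obstacle is the metric stretching near $\Sigma$: the conformal factor must blow up fast enough to push $\Sigma$ to infinity in both $U_1$ and $U_2$ (producing completeness there), yet smoothly interpolate with the unstretched metric near $t=0$ so that the cobordism is a bona fide Riemannian manifold with boundary in Braverman's sense. Once the factor $\chi$ is arranged to grow near $\Sigma$ in a way compatible with the prescribed $\chi_j$ on $U_j$, the rest of the verification (equivariance, Clifford module extension, compactness of zeros on the cobordism) follows from the localization of $Z_{\varphi}$ established in the first step.
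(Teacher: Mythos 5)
Your approach matches the paper's, which simply cites Corollary 4.7 of \cite{Braverman02}; that corollary is proved by exactly the cobordism-to-Theorem-3.7 reduction you describe, and your preliminary step separating $Z_{\varphi}$ into compact pieces $Z_{\varphi}\cap U_1$ and $Z_{\varphi}\cap U_2$ is the correct way to see that each $\varphi|_{U_j}$ is taming. Two small corrections are worth recording. The cobordism cannot literally be $M \times [0,1]$ with a metric that blows up along $\Sigma \times \{1\}$, since that is no longer a smooth Riemannian manifold with boundary; the right object is $W := (M \times [0,1]) \setminus (\overline{\Sigma} \times \{1\})$, whose boundary is $M \times \{0\}$ together with $(U_1 \sqcup U_2) \times \{1\}$, and one then chooses a complete $K$-invariant metric on $W$ agreeing with $g + dt^2$ near $t=0$ and with $g^{\chi_j}|_{U_j} + dt^2$ near the $t=1$ end. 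Also, your parenthetical alternative via Theorem~\ref{thm htp invar} does not directly apply: that theorem concerns two taming maps on a single fixed Riemannian manifold, whereas here the underlying manifold and metric both change between the two ends of the cobordism, so the general cobordism invariance of Theorem 3.7 in \cite{Braverman02} is genuinely needed rather than mere homotopy of taming maps.
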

\begin{proof}
See Corollary 4.7 in \cite{Braverman02}.
\end{proof}
Proposition \ref{prop additivity} and Lemma \ref{lem vanishing} imply the following excision property of the index.
\begin{proposition}[Excision] \label{prop excision}
Let ${U'} \subset M$ be a relatively compact, $K$-invariant neighbourhood of $Z_{\varphi}$ such that $\partial {U'}$ is a smooth hypersurface in $M$. Then
\[
\indK(\cE, \varphi) = \indK(\cE|_{U'}, \varphi|_{U'}, g|_{{U'}}^{\chi}), 
\]
with $\chi$ as above.
\end{proposition}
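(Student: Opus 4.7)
The plan is to reduce the statement to additivity (Proposition \ref{prop additivity}) combined with the vanishing lemma (Lemma \ref{lem vanishing}), exactly as the paper hints.

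First, I would set $\Sigma := \partial U'$, $U_1 := U'$, and $U_2 := M \setminus \overline{U'}$, and verify that these satisfy the hypotheses of Proposition \ref{prop additivity}. The set $\Sigma$ is $K$-invariant and, by hypothesis, a smooth hypersurface; it is relatively compact because $\overline{U'}$ is compact and $\partial U' \subset \overline{U'}$. Since $U'$ is an open neighbourhood of $Z_{\varphi}$, we have $Z_{\varphi} \subset U'$, so $Z_{\varphi} \cap \partial U' = \emptyset$, i.e.\ $\vf^{\varphi}$ has no zero on $\Sigma$. The sets $U_1, U_2$ are open, $K$-invariant, disjoint, and $U_1 \cup U_2 = M \setminus \Sigma$.

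Next, I would apply Proposition \ref{prop additivity} to get
\[
\indK(\cE, \varphi) = \indK(\cE|_{U'}, \varphi|_{U'}, g|_{U'}^{\chi_1}) + \indK(\cE|_{U_2}, \varphi|_{U_2}, g|_{U_2}^{\chi_2}),
\]
where $\chi_j$ are cutoff functions as in Subsection \ref{sec index noncomplete}, chosen so that $\chi_j \equiv 1$ near $Z_{\varphi} \cap U_j$ and so that $U_j$ is complete in the metric $g|_{U_j}^{\chi_j}$. The function $\chi_1$ is a function of the type considered in Subsection \ref{sec index noncomplete} for the relatively compact open set $U'$ with smooth boundary, and $\chi_2$ can be taken identically equal to $1$ since $Z_{\varphi} \cap U_2 = \emptyset$ imposes no constraint near the zero set.

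The final step is to observe that the second summand vanishes. Indeed, the taming map $\varphi|_{U_2}: U_2 \to \kk$ has $Z_{\varphi|_{U_2}} = Z_{\varphi} \cap U_2 = \emptyset$, which is trivially compact, and $(U_2, g|_{U_2}^{\chi_2})$ is complete by construction. Hence Lemma \ref{lem vanishing}, applied on the complete manifold $(U_2, g|_{U_2}^{\chi_2})$, yields
\[
\indK(\cE|_{U_2}, \varphi|_{U_2}, g|_{U_2}^{\chi_2}) = 0.
\]
Combining this with the additivity identity above gives the desired equality. The proof is essentially formal; there is no substantive obstacle, only the bookkeeping of checking that the extension of Braverman's index to non-complete manifolds from Subsection \ref{sec index noncomplete} makes all the cited results applicable in this setting.
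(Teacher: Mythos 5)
Your argument is exactly the one the paper intends: apply Proposition \ref{prop additivity} to the decomposition $M \setminus \partial U' = U' \sqcup (M \setminus \overline{U'})$ and kill the second summand via Lemma \ref{lem vanishing}. One small slip: you cannot take $\chi_2 \equiv 1$ and simultaneously claim $(U_2, g|_{U_2}^{\chi_2})$ is complete, since $U_2 = M \setminus \overline{U'}$ has boundary $\partial U'$ and geodesics can exit through it in finite time; you need $\chi_2$ to blow up near $\partial U'$ (the constraint $\chi_2 \equiv 1$ near $Z_\varphi \cap U_2$ is vacuous, so this costs nothing), or alternatively observe that Definition \ref{def index noncomplete} with $Z_{\varphi|_{U_2}} = \emptyset$ already returns $0$ by choosing the empty relatively compact neighbourhood. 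With that repair the proof is complete and matches the paper's one-line justification.
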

One consequence of this excision property is that the index \eqref{eq index g chi} is independent of the function $\chi$. Another is that the following definition is a well-defined extension of Definition \ref{def index Br}.
\begin{definition}
\label{def index noncomplete}
Let $(U, g)$ be a (possibly non-complete) Riemannian manifold, equipped with an isometric action by $K$, a $K$-equivariant Clifford module $\cE \to U$, and a taming map $\varphi$.
Let $U'$ be a $K$-invariant, relatively compact open neighbourhood $Z_{\varphi}$, such that $\partial U'$ is a smooth hypersurface in $U$. Within $U'$, we can choose a function $\chi$ as above. Then the \emph{equivariant $L^2$-index} of the pair $(\cE, \varphi)$ is
\[
\ind_{K}^{L^2}(\cE, \varphi) := \indK(\cE|_{U'}, \varphi|_{U'}, g|_{U'}^{\chi}) \quad \in \hat R(K).
\]
\end{definition}

%
From now on, we will not assume manifolds to be complete, but apply Definition \ref{def index noncomplete} where necessary.


\section{$\Spinc$-Dirac operators and proper moment maps} \label{sec Spinc Dirac}

We now specialise to the case of $\Spinc$-Dirac operators. In that setting, we define a generalisation of Braverman's index, for a natural class of maps $\varphi$, without assuming  $Z_{\varphi}$ to be compact. This assumption is replaced by properness of the $\Spinc$-moment map defined in Subsection \ref{sec moment maps}. In Subsection \ref{sec [Q,R]=0}, we state the main result of this paper, that this index of $\Spinc$-Dirac operators decomposes into irreducible representations according to the {quantisation commutes with reduction} principle. 

As before, let $M$ be a 
Riemannian 
manifold, on which a compact, connected Lie group $K$ acts
 isometrically. 
From now on, we will suppose that $M$ is even-dimensional. 

\subsection{$\Spinc$-Dirac operators} \label{subsec Spinc Dirac}

Suppose $M$ has a $K$-equivariant $\Spinc$-structure. (Then we will call $M$  a $K$-equivariant $\Spinc$-manifold.) By definition, this means that there is a $\Z_2$-graded, $K$-equivariant complex vector bundle $\cS \to M$, called the \myemph{spinor bundle},
and a $K$-equivariant isomorphism
\[
c: \Cl(TM) \xrightarrow{\cong} \End(\cS)
\]
of graded algebra bundles, where $\Cl(TM)$ is the complex Clifford bundle of $TM$.
Then $\cS$ is a $K$-equivariant Clifford module over $M$.  The \myemph{determinant line bundle} associated to the spinor bundle $\cS$ is the line bundle
\[
L := \Hom_{\Cl(TM)}(\overline{\cS}, \cS) \to M,
\]
where $\overline{\cS} \to M$ is the vector bundle $\cS$, with the opposite complex structure. See e.g.\ Appendix D of \cite{Guillemin98} or Appendix D of \cite{Lawson89}  for more details on $\Spinc$-structures.

Locally, on small enough open subsets $U$ of $M$, one has
\begin{equation} \label{eq decomp S}
\cS|_U \cong \cS^U_0 \otimes L|_U^{1/2},
\end{equation}
where $\cS_0^U \to U$ is the spinor bundle of a local $\Spin$-structure. The Levi--Civita connection on $TU$ induces a connection $\nabla^{\cS^U_0}$ on $\cS^U_0$. Fix a $K$-invariant Hermitian connection $\nabla^L$ on $L$. 
Together with $\nabla^{\cS^U_0}$, this induces a connection $\nabla^{\cS|_U}$ on $\cS|_U$, via the decomposition \eqref{eq decomp S}:
\[
\nabla^{\cS|_U} := \nabla^{\cS^U_0} \otimes 1_{L|_U^{1/2}} + 1_{\cS^U_0} \otimes \nabla^{L|_U^{1/2}}.
\]
Here $\nabla^{L|_U^{1/2}}$ is the connection on $L|_U^{1/2}$ induced by $\nabla^L$.
These local connections combine to a globally well-defined connection $\nabla^{\cS}$ on $\cS$ (see e.g.\ Proposition D.11 in \cite{Lawson89}).

The \myemph{$\Spinc$-Dirac operator} associated to $\nabla^L$ is the operator $D$ on $\Gamma^{\infty}(\cS)$ defined as in \eqref{eq def Dirac}, with $\cE$ replaced by $\cS$, and $\nabla^{\cE}$ by $\nabla^{\cS}$.

\subsection{Moment maps} \label{sec moment maps}

An important role will be played by the $\Spinc$-moment map associated to the connection $\nabla^L$ on $L$ chosen in Subsection \ref{subsec Spinc Dirac}. This generalises the moment map in symplectic geometry.

In this subsection only, we consider a more general situation. Let $L \to M$ be any $K$-equivariant line bundle, and let $\nabla^L$ be any $K$-invariant connection on $L$.
For any element $X \in \kk$, we denote the induced vector field on $M$ by $X^M$, i.e.\ 
\begin{equation} \label{eq def XM}
X^M_m := \ddt \exp(-tX)\cdot m,
\end{equation}
for all $m \in M$. In addition, for any $K$-equivariant vector bundle $E \to M$, and any $X \in \kk$, we 
write  $\cL^E_X$ for the Lie derivative of smooth sections of $E$ with respect to $X$. 
\begin{definition} \label{def mom map}
The \myemph{moment map} associated to $\nabla^L$ is the map
\[
\mu: M \to \kk^*
\]
defined by
\[
2 \ii \mu_X = \cL^L_X - \nabla^L_{X^M} \quad \in \End(L) \cong C^{\infty}(M, \C),
\]
for all $X \in \kk$. Here $\mu_X \in C^{\infty}(M)$ is the pairing of $\mu$ and $X$.

If $L$ is the determinant line bundle of a $K$-equivariant $\Spinc$-structure, then $\mu$ is called a \emph{$\Spinc$-moment map}.
\end{definition}
One can compute that for all $X \in \kk$, 
\begin{equation} \label{eq moment curv}
2 \ii d\mu_X = R^{\nabla^L}(X^M, \relbar),
\end{equation}
where $R^{\nabla^L}$ is the curvature of $\nabla^L$ (see e.g.\ Lemma 2.2 in \cite{Mathai14}). This implies that $\mu$ is a moment map in the usual symplectic sense if the closed two-form $R^{\nabla^L}$ is nondegenerate.
A direct consequence of \eqref{eq moment curv} is the following important property of moment maps.
\begin{lemma} \label{lem mu loc const}
Let $H<K$ be a Lie subgroup, with Lie algebra $\kh$. Then the composition of a 
moment map $\mu$ with the restriction map from $\kk^*$ to $\kh^*$ is locally constant on the fixed point set $M^H$.
\end{lemma}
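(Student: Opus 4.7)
The plan is to derive the statement as a direct consequence of the curvature formula \eqref{eq moment curv} for the moment map. Since the map $\mu|_{M^H} : M^H \to \kk^*$ composed with the restriction $\kk^* \to \kh^*$ sends $m$ to the linear functional $X \mapsto \mu_X(m)$ on $\kh$, it suffices to show that for every fixed $X \in \kh$ the smooth function $\mu_X$ is locally constant on $M^H$.

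To establish this, I would first observe that at any point $m \in M^H$ and for any $X \in \kh$, the vector field $X^M$ defined in \eqref{eq def XM} vanishes at $m$. Indeed, since $H$ fixes $m$, one has $\exp(-tX)\cdot m = m$ for all $t$, so differentiating at $t=0$ gives $X^M_m = 0$. Plugging this into \eqref{eq moment curv} yields
\[
2\ii (d_m \mu_X)(v) = R^{\nabla^L}_m(X^M_m, v) = 0
\]
for every tangent vector $v \in T_m M$. In particular, $d_m \mu_X$ vanishes on the subspace $T_m M^H \subset T_m M$, i.e.\ the restriction $\mu_X|_{M^H}$ has vanishing differential at every point of $M^H$.

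Since a smooth function on a manifold with everywhere vanishing differential is locally constant on each connected component, it follows that $\mu_X|_{M^H}$ is locally constant for each $X \in \kh$, and hence so is the composition $M^H \xrightarrow{\mu} \kk^* \to \kh^*$. There is no real obstacle here: the entire argument rests on the elementary fact that $X^M$ vanishes along $M^H$ for $X \in \kh$, combined with the already established formula \eqref{eq moment curv}. The only small point to keep in mind is that $M^H$ need not be connected, but local constancy is a pointwise/connected-component statement, so this causes no difficulty.
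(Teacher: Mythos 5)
Your argument is correct and is exactly the "direct consequence of \eqref{eq moment curv}" that the paper alludes to: $X^M$ vanishes along $M^H$ for $X \in \kh$, hence $d\mu_X = 0$ there, hence $\mu_X$ is locally constant on $M^H$. The paper does not spell this out, but your reasoning fills in precisely the intended steps.
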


Fix an $\Ad(K)$-invariant inner product on $\kk$. From now on, we will use this to identify $\kk^* \cong \kk$, and in particular view $\mu$ as a map to $\kk$. Then we have the vector field $\vf^{\mu}$ on $M$, defined as in \eqref{eq def V phi}.
Suppose that $\mu$ is  \myemph{proper}. 
We will see in Proposition \ref{prop mu proper} that a taming moment map can always be replaced by a proper one, without changing the resulting index. Therefore, assuming $\mu$ to be proper is a weaker assumption than assuming $\mu$ to be taming.
Because $\mu$ is proper, the set $Z_{\mu} \subset M$ where $\vf^{\mu}$ vanishes can be decomposed in a way that allows us to define an suitable index of $\Spinc$-Dirac operators.

Let $T<K$ be a maximal torus, with Lie algebra $\kt$. Let $\kt^*_+ \subset \kt^*$ be a choice of closed positive Weyl chamber.
\begin{lemma} \label{lem decomp Z mu}
There is a subset $\Gamma \subset \kt^*_+$ such that
\[
Z_{\mu} = \bigcup_{\alpha \in \Gamma} K\cdot  \left( M^{\alpha} \cap \mu^{-1}(\alpha)\right),
\]
and for all $R>0$, there are finitely many $\alpha \in \Gamma$ with $\|\alpha\| \leq R$.
\end{lemma}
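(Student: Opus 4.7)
The plan is to take $\Gamma := \mu(Z_\mu) \cap \kt^*_+$ and verify both claims separately.

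The decomposition $Z_\mu = \bigcup_{\alpha \in \Gamma} K\cdot(M^\alpha \cap \mu^{-1}(\alpha))$ is essentially tautological. For $\supseteq$: if $\alpha \in \Gamma$ and $m \in M^\alpha \cap \mu^{-1}(\alpha)$, then $m$ is fixed by $\exp(t\alpha)$ for all $t$ and $\mu(m) = \alpha$, so $\vf^\mu_m = 0$; $K$-invariance of $Z_\mu$ (from equivariance of $\mu$) then gives $K \cdot m \subseteq Z_\mu$. For $\subseteq$: given $m \in Z_\mu$, the condition $\vf^\mu_m = 0$ says exactly that $\mu(m) \in \kk_m$; choosing $k \in K$ with $\alpha := \Ad(k)\mu(m) \in \kt^*_+$, equivariance yields $\mu(km) = \alpha$, while the identity $\exp(t\alpha)km = k\exp(t\mu(m))m = km$ shows $km \in M^\alpha$, so $m \in K\cdot(M^\alpha \cap \mu^{-1}(\alpha))$ with $\alpha \in \Gamma$.

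For the finiteness claim, fix $R > 0$. Properness of $\mu$ makes $\mu^{-1}(\overline{B_R(0)})$ compact and $K$-invariant, so by standard compact transformation group theory only finitely many conjugacy classes of stabilizers $(H_1), \ldots, (H_s)$ appear on it. For each representative $H_i$, the closed submanifold $M^{H_i}$ meets $\mu^{-1}(\overline{B_R(0)})$ in a compact set, hence in finitely many connected components $C_{i,1}, \ldots, C_{i,n_i}$; by Lemma \ref{lem mu loc const}, the $\kh_i^*$-component of $\mu$ is constant on each $C_{i,j}$, equal to some $\alpha_{i,j} \in \kh_i^*$. Any $\alpha \in \Gamma$ with $\|\alpha\| \leq R$ comes from some $m \in Z_\mu$; after conjugation so that $K_m^0$ coincides with some $H_i$, the condition $\mu(m) \in \kk_m = \kh_i$ forces $\mu(m)$ to equal its own $\kh_i^*$-component, hence to coincide with some $\alpha_{i,j}$. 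Translating the finitely many resulting $K$-orbits of values back into $\kt^*_+$ via the Weyl group produces at most $|W|\sum_i n_i$ candidates, so $\Gamma \cap \overline{B_R(0)}$ is finite.

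The technical work lies entirely in the finiteness step, where several ingredients must be coordinated: properness (to secure compactness), the slice theorem (for finiteness of orbit types on the compact set), standard differential topology (for finiteness of components of the $M^{H_i}$ in any compact region), and Lemma \ref{lem mu loc const} applied in precisely the form where the assumption $\mu(m) \in \kk_m$ causes $\mu(m)$ to collapse onto its $\kh_i^*$-component. I expect the one genuinely delicate point to be reconciling the conjugation that normalizes $K_m^0$ to a fixed $H_i$ with the conjugation that places $\mu(m)$ in $\kt^*_+$; this requires a careful Weyl-group bookkeeping, but no further input beyond what is already on the table.
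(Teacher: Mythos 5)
Your proof is correct, but it takes a genuinely different route from the paper's. The paper first slices to $Y := \mu^{-1}(\kt^*_+)$, observes $Z_\mu = K\cdot(Z_\mu \cap Y)$, and then analyses the $T$-action on $Y$; the finiteness of $\Gamma_R$ is deduced from the fact that each constant value $\alpha_F$ of $\mu$ on a relevant component is a weight of the $H_j$-action on $L|_F$, hence lies in a bounded region of an integral lattice. You instead work directly with the $K$-action on $M$, invoke finiteness of $K$-orbit types on the compact set $\mu^{-1}(\overline{B_R})$, and get finiteness of the constant values topologically: a compact subset of $M$ meets only finitely many connected components of the closed submanifold $M^{H_i}$, since the components form an open cover of the compact intersection. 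This avoids the lattice observation and uses Lemma~\ref{lem mu loc const} in essentially the same way. Your version is arguably cleaner in that it never needs the line bundle, only the moment-map identity; the paper's lattice argument, on the other hand, gives finiteness even if one does not invoke the compactness/openness bookkeeping for components. Two small wording issues you should tidy up: first, ``meets $\mu^{-1}(\overline{B_R(0)})$ in a compact set, hence in finitely many connected components'' is slightly imprecise --- a compact set can have infinitely many components; what is true, and what you use, is that the compact set is \emph{contained in} finitely many components of $M^{H_i}$, because those components are open in $M^{H_i}$. Second, the ``$|W|\sum_i n_i$'' overcounts: each $K$-orbit meets $\kt^*_+$ in exactly one point, so the number of candidates in $\kt^*_+$ is at most $\sum_i n_i$; the Weyl group factor is harmless but unnecessary. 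Also, where you write ``$K_m^0$ coincides with some $H_i$'', you really mean $K_m$ is conjugated to the representative $H_i$ of its full-stabiliser conjugacy class; since $\mathfrak{k}_m$ depends only on the identity component this does not affect the argument.
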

Note that the sets $K\cdot  \left( M^{\alpha} \cap \mu^{-1}(\alpha)\right)$ are compact by properness of $\mu$.
\begin{proof}
Consider the subset
$
Y := \mu^{-1}(\kt^*_+) \subset M.
$
Then $K\cdot Y = M$. Since $Z_{\mu}$ is $K$-invariant, we have
$
Z_{\mu} = K\cdot (Z_{\mu} \cap Y).
$
And because $\mu(Y) \subset \kt^*$, 
\[
Z_{\mu} \cap Y = \bigcup_H Y^H \cap \mu^{-1}(\kh),
\]
where $H$ runs over the stabilisers of the action by $T$ on $Y$.

Fix $R > 0$. By properness of $\mu$, the set
\[
Y_R := \{m \in Y; \|\mu(m)\| < R\}
\]
is relatively compact. Hence the action by $T$ on $Y_R$ has finitely many stabilisers $H_1, \ldots, H_{k_R}$. Now
\[
Z_{\mu} \cap Y_R = \bigcup_{j=1}^{k_R} Y^{H_j}_R \cap \mu^{-1}(\kh_j).
\]
Because of Lemma \ref{lem mu loc const}, the map $\mu$ is locally constant on the sets $M^{H_j} \cap \mu^{-1}(\kh_j)$, hence also on their subsets
$Y^{H_j}_R \cap \mu^{-1}(\kh_j)$. For a connected component $F$ of $Y^{H_j}_R \cap \mu^{-1}(\kh_j)$, let $\alpha_F \in \kh_j \cap \kt^*_+$ be single value of $\mu$ on $F$. Such an element $\alpha_F$ is a weight of the action by $H_j$ on $L|_F$, and hence lies on an integral lattice. Therefore, for fixed $j$, the set
\[
\Gamma_{R, j} := \{\alpha_F; \text{$F$ a connected component of $Y^{H_j}_R \cap \mu^{-1}(\kh_j)$}\}
\]
is finite. So the set
\[
\Gamma_R := \bigcup_{j=1}^{k_R} \Gamma_{R, j}
\]
is finite as well. The claim follows, with
\[
\Gamma := \bigcup_{R>0} \Gamma_{R}.
\]
\end{proof}
The equality (2.16) in \cite{Paradan11} is a symplectic version of this lemma.
In the compact case, 
see Lemma 3.15 in \cite{Kirwan-book} for symplectic manifolds, and Lemma 2.9 in \cite{Paradan14} for $\Spinc$-manifolds.



\subsection{The index for proper moment maps} \label{sec index proper}


We now return to the situation of Subsection \ref{subsec Spinc Dirac}, where $L \to M$ is the determinant line bundle of a $K$-equivariant $\Spinc$-structure. Let $\mu$ be the $\Spinc$-moment map associated to the chosen connection $\nabla^L$ on $L$. Suppose $\mu$ is proper.

We will use Lemma \ref{lem decomp Z mu} to define an equivariant index of $\Spinc$-Dirac operators, for proper moment maps. As in Lemma \ref{lem decomp Z mu}, write
\[
Z_{\mu} = \bigcup_{\alpha \in \Gamma} K\cdot  \left( M^{\alpha} \cap \mu^{-1}(\alpha)\right).
\]
For every $\alpha \in \Gamma$, let $U_{\alpha}$ be a $K$-invariant neighbourhood of $K\cdot  \left( M^{\alpha} \cap \mu^{-1}(\alpha)\right)$. 
We choose these neighbourhoods so small that $U_{\alpha} \cap U_{\beta} = \emptyset$ 
if $\alpha \not= \beta$, and
\begin{equation} \label{eq mu U alpha}
\| \mu(U_{\alpha}) \| \subset \bigl]\|\alpha\|-1, \|\alpha\| + 1 \bigr[.
\end{equation}

The definition of our index is based on the following vanishing result. For $V \in \hat R(K)$ and $\pi \in \hat K$, we will denote the multiplicity of $\pi$ in $V$ by $[V:\pi]$. 
\begin{theorem}[Vanishing]\label{thm vanishing}
For every irreducible representation $\pi \in \hat K$, there is a constant $C_{\pi} > 0$ such that for every even-dimensional $K$-equivariant $\Spinc$-manifold $U$, with spinor bundle $\cS \to U$ and taming moment map $\mu$, one has
\[
\bigl[\indK(\cS, \mu):\pi \bigr] = 0
\]
if $\|\mu(m)\| > C_{\pi}$ for all $m \in U$.
\end{theorem}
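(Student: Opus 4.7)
The plan is to adapt Tian and Zhang's analytic vanishing argument from \cite{Zhang98}, originally formulated for compact symplectic manifolds, to the $\Spinc$ setting on the possibly noncompact manifold $U$. Fix an irreducible representation $\pi\in\hat K$. It suffices to show that whenever $\|\mu\|>C_\pi$ on $U$, there exists an admissible rescaling $f$ (i.e.\ one satisfying Braverman's growth condition) for which the restriction of $D_{f\mu}^2$ to the $\pi$-isotypic subspace of $\Gamma^\infty_c(\cS)$ is bounded below by a positive multiple of the $L^2$-norm. By Theorem \ref{thm Br index}, the $L^2$-kernel then contains no $\pi$-isotypic vector, so $[\indK(\cS,\mu):\pi]=0$.

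The central computation is a Bochner/Lichnerowicz-type expansion of $D_{f\mu}^2$. Using $[\nabla^{\cS}_w,c(v)]=c(\nabla^{TM}_w v)$ together with $c(v)^2=-\|v\|^2$, one obtains, schematically,
\begin{equation*}
D_{f\mu}^2 \;=\; D^2 + f^2\|\vf^{\mu}\|^2 + 2\ii f\,\nabla^{\cS}_{\vf^{\mu}} - \ii f \textstyle\sum_j c(e_j)c(\nabla_{e_j}\vf^{\mu}) - \ii c(df)c(\vf^{\mu}).
\end{equation*}
Applying Kostant's formula to rewrite $\nabla^{\cS}_{\vf^{\mu}}$ in terms of $\cL^{\cS}_{\mu}$, and using the identification $\kk\cong\kk^{*}$ so that $\mu_{\mu(m)}(m)=\|\mu(m)\|^2$, turns the cross term into a positive potential of order $f\|\mu\|^2$ plus an indefinite Lie-derivative contribution:
\begin{equation*}
D_{f\mu}^2 \;=\; D^2 + f^2\|\vf^{\mu}\|^2 + 2f\|\mu\|^2 + 2\ii f\,\cL^{\cS}_{\mu} + R_f,
\end{equation*}
with $R_f$ a sum of zeroth-order Clifford contractions built from $\nabla \vf^{\mu}$, $df$ and $\vf^{\mu}$.

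Next, restrict to a $\pi$-isotypic section $s$. On this subspace, $\cL^{\cS}_X$ acts through the derived representation of $\pi$, so $\|\cL^{\cS}_\mu s\|\leq b_\pi\|\mu\|\|s\|$ for a constant $b_\pi$ depending only on the weights of $\pi$. Consequently,
\begin{equation*}
\langle D_{f\mu}^2 s,s\rangle \;\geq\; 2f\bigl(\|\mu\|-b_\pi\bigr)\|\mu\|\cdot\|s\|^2 - \langle R_f s,s\rangle,
\end{equation*}
so once $\|\mu\|>C_\pi:=2 b_\pi$, the positive potential $f\|\mu\|^2$ strictly dominates the Lie-derivative term. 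The leading term $f^2\|\vf^{\mu}\|^2$ together with $f\|\mu\|^2$ then absorbs $R_f$ away from $Z_{\mu}$ for $f$ chosen admissibly and sufficiently large, while on the compact set $Z_{\mu}$ the uniform bound $\|\mu\|>C_\pi$ still yields positivity after further scaling. This gives a strict lower bound $\langle D_{f\mu}^2 s,s\rangle\geq \varepsilon\|s\|^2$ on the $\pi$-isotypic subspace, as required.

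The main obstacle will be the control of the remainder $R_f$: because $U$ is noncompact and $\mu$ may be unbounded, $\nabla \vf^{\mu}$ grows at least linearly in $\|\mu\|$ and $\|d\mu\|$, so a naive choice of $f$ will not defeat it. I expect to address this by a preliminary $K$-invariant geometric rescaling, replacing $\mu$ by $h(\|\mu\|^2)\mu$ for a cutoff $h\geq 0$ that equals $1$ near $Z_{\mu}$ (so that the index is unchanged by homotopy invariance, Theorem \ref{thm htp invar}), and by choosing $f$ so that $|df|/f$ is bounded. The regulatory function controlling the rescaling is precisely the ``function $d$'' alluded to in the introduction, which here arises analytically from the Bochner argument rather than geometrically as in \cite{Paradan14}.
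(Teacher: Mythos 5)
Your overall strategy is the same as the paper's: adapt Tian--Zhang's positivity argument, expand $D_{T\mu}^2$ via a Bochner formula, use Kostant's identity to expose a potential of size $\sim T\|\mu\|^2$, bound the Lie-derivative contribution on $\pi$-isotypic sections, and conclude vanishing of the $L^2$-kernel. But there is a genuine gap in how you propose to handle the terms near $Z_{\mu}$, and as a consequence the constant $C_{\pi}$ you arrive at is wrong.

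The difficulty is not merely in the ``far'' region. After applying Kostant's formula, the zeroth-order operator $-\ii T\sum_j c(e_j)c(\nabla_{e_j}\vf^{\mu})$ is $\cO(T)$ even \emph{on} $Z_{\mu}$, where the quadratic potential $T^2\|\vf^{\mu}\|^2$ vanishes, and it competes at the same order with $2T\|\mu\|^2$. A cutoff $h(\|\mu\|^2)\mu$ equal to $1$ near $Z_{\mu}$ does not touch this term at all, and choosing $f$ with $|df|/f$ bounded only controls the $c(df)c(\vf^{\mu})$ remainder, not the Clifford trace. What the paper actually does is: (i) replace $\mu$ locally near each compact component of $Z_{\mu}$ by a deformation $\mu^t=\mu_{\kt^{\alpha}}+t\mu_{\kh}$ adapted to the centraliser $\kk_{\alpha}$, so that for small $t$ the operator $\sum c(e_j)c(\nabla_{e_j}\vf^{\mu^t})$ converges to the corresponding trace for $\alpha^M$ restricted to a slice $Y$; (ii) invoke a Bismut--Lebeau / Tian--Zhang harmonic-oscillator estimate $D^2 - T\tr|\cL^{T_mM}_{\vf^{\mu^t}}| + T^2\|\vf^{\mu^t}\|^2 \geq -C - T\varepsilon$ near $m\in Z_{\mu}$; and (iii) assemble these into a lower bound of the form $T\bigl(2d(m)-\varepsilon\bigr)-C$ involving Paradan's function $d(m)=\|\mu(m)\|^2 + \tfrac14\tr|\cL^{T_mM}_{\mu(m)}|-\tfrac12\tr|\ad(\mu(m))|$. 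The estimate $d\geq\|\mu\|^2-2\|\rho\|\,\|\mu\|$ is exactly what absorbs the trace contribution, and it forces $C_{\pi}=B_{\pi}+2\|\rho\|$; the $2\|\rho\|$ does not come from the Lie-derivative bound, so your proposed $C_{\pi}=2b_{\pi}$ is incorrect --- for $\pi$ trivial it would give $C_{\pi}=0$, whereas the paper needs $C_{\pi}=2\|\rho\|$.

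Two further points. Your claim that $\|\cL^{\cS}_{\mu}s\|\leq b_{\pi}\|\mu\|\,\|s\|$ on the $\pi$-isotypic subspace is stated globally, but $\cL^{\cS}_{\mu}$ is not a representation operator (the Lie-algebra element varies with the base point), so this requires justification; the paper proves a local version (Lemma~\ref{lem bound L mu t}) via the slice model $\bigl(L^2(K)\otimes L^2(\cS|_Y)\bigr)^{K_{\alpha}}$, where the constant $B_{\pi}(\|\alpha\|+\varepsilon)$ emerges. And the component-by-component localisation is not cosmetic: it is what lets one fix a single $\alpha$, apply the $\mu^t$-deformation there, and patch with an IMS-type partition-of-unity argument. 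Without this structure the ``$R_f$ is absorbed'' step does not go through.
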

In this theorem, $\indK(\cS, \mu)$ is defined as in Definition \ref{def index noncomplete}, since $U$ may not be complete. This result will be proved in Section \ref{sec vanishing}.

Since $\mu$ is proper, the set $Z_{\mu} \cap U_{\alpha}$ is compact for all $\alpha$. Hence the index
\[
\indK(\cS|_{U_{\alpha}}, \mu|_{U_{\alpha}}) \quad \in \hat R(K)
\]
is well-defined as in Definition \ref{def index noncomplete}.
Applying Theorem \ref{thm vanishing} to the open sets $U_{\alpha} \subset M$, we find that only finitely many of these indices contribute to the multiplicity of any given irreducible representation.
\begin{corollary} \label{cor fin mult}
Let $\pi \in \hat K$ be any irreducible representation. Then for all but finitely many $\alpha \in \Gamma$, we have
\[
\bigl[\indK(\cS|_{U_{\alpha}}, \mu|_{U_{\alpha}}):\pi \bigr] = 0.
\]
\end{corollary}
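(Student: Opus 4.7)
The plan is a direct application of the vanishing theorem (Theorem \ref{thm vanishing}) combined with the norm-localisation of the neighbourhoods $U_{\alpha}$ and the locally finite nature of $\Gamma$ provided by Lemma \ref{lem decomp Z mu}.

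Fix $\pi \in \hat K$, and let $C_{\pi} > 0$ be the constant supplied by Theorem \ref{thm vanishing}. First I would check that $\mu|_{U_{\alpha}}$ is a taming moment map on $U_{\alpha}$, so that Theorem \ref{thm vanishing} is applicable: indeed $Z_{\mu|_{U_{\alpha}}} = Z_{\mu} \cap U_{\alpha}$, and since the $U_{\alpha}$ were chosen pairwise disjoint and small enough, this intersection equals the single compact piece $K\cdot(M^{\alpha}\cap \mu^{-1}(\alpha))$ of the decomposition in Lemma \ref{lem decomp Z mu}.

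Next I would exploit the bound \eqref{eq mu U alpha}. For every $\alpha \in \Gamma$ with $\|\alpha\| > C_{\pi} + 1$, and every $m \in U_{\alpha}$, we have
\[
\|\mu(m)\| > \|\alpha\| - 1 > C_{\pi}.
\]
Hence Theorem \ref{thm vanishing}, applied to $U = U_{\alpha}$ with the spinor bundle $\cS|_{U_{\alpha}}$ and the taming moment map $\mu|_{U_{\alpha}}$, yields
\[
\bigl[\indK(\cS|_{U_{\alpha}}, \mu|_{U_{\alpha}}):\pi \bigr] = 0.
\]

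Finally, the remaining $\alpha \in \Gamma$ are those with $\|\alpha\| \leq C_{\pi} + 1$, and Lemma \ref{lem decomp Z mu} guarantees this is a finite set. This completes the argument; the only mildly subtle point, which I would make sure to verify, is that $Z_{\mu|_{U_{\alpha}}}$ is in fact compact so that Theorem \ref{thm vanishing} applies as stated, but this is immediate from the disjointness of the $U_{\alpha}$ and the decomposition of $Z_{\mu}$.
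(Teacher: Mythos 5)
Your proof is correct and follows the paper's argument exactly: apply the vanishing theorem with the constant $C_{\pi}$, note via \eqref{eq mu U alpha} that $\|\mu\| > C_{\pi}$ on $U_{\alpha}$ whenever $\|\alpha\| > C_{\pi}+1$, and invoke Lemma \ref{lem decomp Z mu} to see only finitely many $\alpha$ fail that bound. The extra check that $Z_{\mu}\cap U_{\alpha}$ is compact (already noted in the text just above the corollary) is a sensible thing to spell out but does not change the argument.
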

\begin{proof}
For $\pi \in \hat K$, let $C_{\pi}>0$ be as in Theorem \ref{thm vanishing}. By Lemma \ref{lem decomp Z mu}, there are only finitely many $\alpha \in \Gamma$ with $\|\alpha\| \leq C_{\pi} + 1$. For all other $\alpha \in \Gamma$, one has $\|\mu\| > C_{\pi}$ on $U_{\alpha}$ by \eqref{eq mu U alpha}, so 
\[
\bigl[\indK(\cS|_{U_{\alpha}}, \mu|_{U_{\alpha}}):\pi \bigr] = 0.
\]
\end{proof}

Corollary \ref{cor fin mult} allows us to generalise Braverman's index of Definitions \ref{def index Br} and \ref{def index noncomplete} in the following way, for $\Spinc$-Dirac operators and proper moment maps.
\begin{definition} \label{def index proper}
The \myemph{equivariant $L^2$-index} of the pair $(\cS, \mu)$ is
\[
\indK(\cS, \mu) := \sum_{\alpha \in \Gamma} \indK(\cS|_{U_{\alpha}}, \mu|_{U_{\alpha}}).
 \quad \in \hat R(K).
\]
\end{definition}
Proposition \ref{prop excision} implies that this definition is independent of the choice of the sets $U_{\alpha}$.
In addition, that proposition shows that Definition \ref{def index proper} reduces to 
Definition \ref{def index noncomplete} if $\mu$ is taming, and hence to Definition \ref{def index Br} if $M$ is also complete.

\begin{remark} \label{rem deform conn}
Because we are dealing with $\Spinc$-Dirac operators here, the deformed Dirac operator $D_{\mu}$ used to define the index in Definition \ref{def index proper} can be obtained as an undeformed Dirac operator for a different choice of connection on $L$. Indeed, Let $\widetilde{\nabla}^L$ be another $K$-invariant, Hermitian connection on $L$. Write
\[
\widetilde{\nabla}^L = \nabla^L - \ii \alpha,
\]
for a $K$-invariant one-form $\alpha \in \Omega^1(M)^K$. The resulting $\Spinc$-Dirac operator $\widetilde{D}$ then equals
\[
\widetilde{D} = D - \ii c(\alpha).
\]
Taking $\alpha$ to be the one-form dual to the vector field $\vf^{\mu}$, we get $\widetilde{D} = D_{\mu}$. This will indirectly play a role in Subsection \ref{sec mu proper}. 
\end{remark}

\begin{remark}
In the symplectic setting, the ways geometric quantisation for proper moment maps was defined in \cite{Zhang14, Paradan11}, are related to
the way we generalised Definition \ref{def index Br} to Definition \ref{def index proper} in the $\Spinc$-case. In \cite{Zhang14, Paradan11}, the symplectic manifold to be quantised was broken up into relevant pieces, on which an equivariant index could be applied. In \cite{Zhang14}, an APS-type index was used, whereas in \cite{Paradan11}, an index of transversally elliptic symbols was used. The comments in Subsection \ref{sec index thm} imply that Definition \ref{def index proper} reduces  to the definitions of quantisation in \cite{Zhang14, Paradan11} in the symplectic case, modulo a shift in the line bundle used.
\end{remark}

\subsection{Quantisation commutes with reduction: the main result} \label{sec [Q,R]=0}

The main result in this paper is Theorem \ref{thm [Q,R]=0}, which states that the index in Definition \ref{def index proper} satisfies the quantisation commutes with reduction principle. This allows one to determine its decomposition into irreducible representations in a geometric way. Therefore, we will regard the index as the $\Spinc$-quantisation of $(M, \mu)$, as in \cite{Mathai14, Paradan14}, and write
\[
\QSpinc_K(M, \mu) := \indK(\cS, \mu).
\]
This is a slight abuse of notation, because this index depends on the $\Spinc$-structure on $M$ (and possibly not just on the determinant line bundle used to define $\mu$). But in what follows, it will usually be clear which $\Spinc$-structure is used.

We start with a vanishing result.
For any subalgebra $\kh \subset \kk$, let $(\kh)$ be its conjugacy class. Set
\[
\cH_{\kk} := \{ (\kk_{\xi}); \xi \in \kk\}.
\]
Let $(\kk^M)$ be the conjugacy class of the generic (i.e.\ minimal) infinitesimal stabiliser of the action by $K$ on $M$. In the symplectic case, there is always a conjugacy class $(\kh) \in \cH_{\kk}$ such that $([\kk^M, \kk^M]) = ([\kh, \kh])$ (see Remark 3.12 in \cite{Lerman98}). In the $\Spinc$-case, existence of such a class is a necessary condition for  $\QSpinc_K(M, \mu)$ to be nonzero.
\begin{theorem}\label{thm kM h}
If there is no $(\kh) \in \cH_{\kk}$ such that $([\kk^M, \kk^M]) = ([\kh, \kh])$, then $\QSpinc_K(M, \mu)=0$.
\end{theorem}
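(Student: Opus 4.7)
The plan is to reduce the statement to Paradan and Vergne's compact $\Spinc$-vanishing theorem in \cite{Paradan14}. Definition \ref{def index proper} gives
\[
\QSpinc_K(M, \mu) = \sum_{\alpha \in \Gamma} \indK(\cS|_{U_\alpha}, \mu|_{U_\alpha}),
\]
so it suffices to show that each local term on the right vanishes. Each of these indices is computed on a relatively compact $K$-invariant neighbourhood $U_\alpha$ of the \emph{compact} piece $K\cdot(M^\alpha \cap \mu^{-1}(\alpha))$ of the zero set $Z_\mu$, so in each local problem the deformed vector field $\vf^\mu$ already has a compact set of zeroes.

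Next, I would use the index theorem \eqref{eq index thm} to rewrite $\indK(\cS|_{U_\alpha}, \mu|_{U_\alpha})$ as the topological index of the transversally elliptic symbol $\sigma_{\mu|_{U_\alpha}}$ on $T_K U_\alpha$. Because this class is supported on the compact set $Z_\mu \cap U_\alpha$, one can embed a relatively compact $K$-invariant neighbourhood of $Z_\mu \cap U_\alpha$ into a \emph{compact} $K$-equivariant $\Spinc$-manifold $\tilde M_\alpha$, extend the spinor bundle and determinant line bundle, and extend $\mu|_{U_\alpha}$ to a $\Spinc$-moment map $\tilde \mu_\alpha$ on $\tilde M_\alpha$ in such a way that the only zeroes of $\vf^{\tilde \mu_\alpha}$ lie inside $U_\alpha$. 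By Proposition \ref{prop excision} together with cobordism invariance of Braverman's index, one then obtains
\[
\indK(\cS|_{U_\alpha}, \mu|_{U_\alpha}) = \QSpinc_K(\tilde M_\alpha, \tilde \mu_\alpha).
\]

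The crucial point is that $\tilde M_\alpha$ can be chosen with the same generic infinitesimal stabiliser as $M$. Since $M$ is connected, the set where the stabiliser algebra equals $\kk^M$ is open and dense in $M$, and hence meets every non-empty open subset, in particular $U_\alpha$. By constructing $\tilde M_\alpha$ out of a tubular neighbourhood of $K \cdot (M^\alpha \cap \mu^{-1}(\alpha))$ (together with an equivariant capping that does not introduce points of smaller stabiliser type), one arranges that $\kk^{\tilde M_\alpha}$ is conjugate to $\kk^M$. Under the hypothesis of the theorem, no $(\kh) \in \cH_\kk$ satisfies $([\kh,\kh]) = ([\kk^{\tilde M_\alpha}, \kk^{\tilde M_\alpha}])$ either, so the compact $\Spinc$-vanishing theorem of Paradan and Vergne (Theorem 1.1 of \cite{Paradan14}) gives $\QSpinc_K(\tilde M_\alpha, \tilde \mu_\alpha) = 0$. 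Summing over $\alpha$ yields $\QSpinc_K(M, \mu) = 0$.

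The main obstacle is the compactification step: one must construct $\tilde M_\alpha$ so that (i) the generic stabiliser is preserved and (ii) no new zeroes of the deformed Kirwan vector field are introduced, so that the local index genuinely equals $\QSpinc_K(\tilde M_\alpha, \tilde \mu_\alpha)$. Both conditions are local near the compact set $K\cdot(M^\alpha \cap \mu^{-1}(\alpha))$, so in practice the construction reduces to analysing the equivariant normal bundle of this set and capping off its sphere bundle equivariantly. Once this geometric input is in place, the vanishing is a direct invocation of \cite{Paradan14}.
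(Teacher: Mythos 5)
Your proposal takes a completely different route from the paper's. The paper does \emph{not} prove Theorem \ref{thm kM h} by compactifying the local pieces $U_\alpha$ and invoking the compact vanishing theorem. Instead, it proves the contrapositive: if $\QSpinc_K(M,\mu)\ne 0$, then some multiplicity $m_\cO$ is nonzero; by the shifting trick \eqref{eq m O 1} (a consequence of the multiplicativity Theorem \ref{thm mult}, which is the heart of the paper), this means $\QSpinc_K\bigl(M\times(-\cO),\mu_{M\times(-\cO)}\bigr)^K\ne 0$; then the analytic vanishing result Proposition \ref{prop vanishing d} forces the Paradan function $d_\cO$ not to be strictly positive; and finally the second part of Theorem 4.20 of \cite{Paradan14} (a structural statement about $d_\cO$ and stabilisers) yields the existence of the desired $(\kh)$. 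So the paper reduces the claim to facts about the function $d$, not to a capping-off construction.

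Your approach has a genuine gap at the compactification step. You assert the existence of a compact $K$-equivariant $\Spinc$-manifold $\tilde M_\alpha$ such that (i) the $\Spinc$-structure and determinant line bundle extend, (ii) $\mu|_{U_\alpha}$ extends to a $\Spinc$-moment map $\tilde\mu_\alpha$ for some connection on the extended determinant bundle whose Kirwan vector field $\vf^{\tilde\mu_\alpha}$ has no zeroes outside $U_\alpha$, and (iii) the generic infinitesimal stabiliser of $\tilde M_\alpha$ matches that of $M$. The identity $\indK(\cS|_{U_\alpha},\mu|_{U_\alpha})=\QSpinc_K(\tilde M_\alpha,\tilde\mu_\alpha)$ via Proposition \ref{prop excision} and cobordism invariance hinges precisely on (ii), which is a non-trivial constraint: the Kirwan vector field on a compact $K$-manifold has zeroes at every $K$-fixed point and on $\mu^{-1}(0)$, and controlling where they land requires constructing the connection on the double (or whatever cap is used) with some care. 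You note this is ``the main obstacle'' but then assert that ``in practice the construction reduces to analysing the equivariant normal bundle and capping off its sphere bundle equivariantly''; this is exactly the step that would require a proof, and it is not obvious. (Note also that $K\cdot(M^\alpha\cap\mu^{-1}(\alpha))$ is generally not a smooth submanifold, so the tubular-neighbourhood picture needs more care.) The point about the generic stabiliser is fine for a doubling construction, since the double of an open subset of $M$ has the same stabiliser types, but that is the easiest of the three conditions. Without establishing (ii), your proof does not close; you would essentially need to redevelop a machinery parallel to Proposition \ref{prop mu proper}, whereas the paper's route uses ingredients it has already proved.
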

This result will be proved in Subsection \ref{sec loc mult}. 
From now on, suppose $(\kh) \in \cH_{\kk}$ is given such that $([\kk^M, \kk^M]) = ([\kh, \kh])$.

To state Theorem \ref{thm [Q,R]=0}, recall that we chose a maximal torus $T<K$, with Lie algebra $\kt \subset \kk$, and a (closed) positive Weyl chamber  $\kt^*_+ \subset \kt^*$. Let $R$ be the set of roots of $(\kk_{\C}, \kt_{\C})$, and let $R^+$ be the set of positive roots with respect to $\kt^*_+$. Set
\[
\rhoK := \frac{1}{2}\sum_{\alpha \in R^+} \alpha.
\]

 Let $\cF$ be the set of relative interiors of faces of $\kt^*_+$. Then
\[
\kt^*_+ = \bigcup_{\sigma \in \cF} \sigma,
\]
a disjoint union. For $\sigma \in \cF$, let $\kk_{\sigma}$ be the infinitesimal stabiliser of a point in $\sigma$.
 Write
\begin{equation} \label{eq def Fh}
\cF(\kh) := \{\sigma \in \cF; (\kk_{\sigma}) = (\kh)\}.
\end{equation}
For such a $\sigma$, let $R_{\sigma}$ be the set of roots of $\bigl( (\kk_{\sigma})_{\C}, \kt_{\C}\bigr)$, and let $R_{\sigma}^+ := R_{\sigma} \cap R^+$. Set
\[
\rho_{\sigma} := \frac{1}{2}\sum_{\alpha \in R^+_{\sigma}} \alpha.
\]
Note that if $\sigma$ is the interior of $\kt^*_+$, then $\rho_{\sigma} = 0$.

Let $\Lambda_+ \subset i\kt^*$ be the set of dominant integral weights, and set $\Lambda_+^{\reg} := \Lambda_+ + \rhoK$. 
In the $\Spinc$-setting, it is natural to parametrise the irreducible representations by their infinitesimal characters, rather than by their highest weights. For $\lambda \in \Lambda_+^{\reg}$, let $\pi_{\lambda}$ be the irreducible representation
of $K$ with infinitesimal character $\lambda$, i.e.\ with highest weight $\lambda - \rhoK$. Then one has, for such $\lambda$,
\[
Q^{\Spinc}_K(K\cdot \lambda) = \pi_{\lambda},
\]
see Lemma 4.1 in \cite{Paradan15}.

For $\xi \in i\kk^*$, we write $M_{\xi} := M_{\xi/i}$. The $\Spinc$-quantisation $\QSpinc(M_{\xi})$ of such a reduced space, for the values $\xi$ of $\mu$ we will need, is defined in Section 5.3 of \cite{Paradan14}. This definition also applies to singular values of $\mu$. It involves realising $M_{\xi}$ as a reduced space $Y_{\eta}$ for the action by an Abelian group $A$ on a submanifold $Y$ of $M$. Then one uses the fact that, since $A$ is Abelian, the reduced space $Y_{\eta + \varepsilon}$ is a $\Spinc$-orbifold for generic $\varepsilon \in (\ka^Y)^{\perp}$. Hence its quantisation $\QSpinc(Y_{\eta + \varepsilon})$ is well-defined as the index of a $\Spinc$-Dirac operator, and turns out to be independent of small enough $\varepsilon$ (see Theorem 5.4 in \cite{Paradan14}). One sets $\QSpinc(M_{\xi}) := \QSpinc(Y_{\eta + \varepsilon})$, for generic and small enough $\varepsilon$.

Our main result is the following.
\begin{theorem}[Quantisation commutes with reduction] \label{thm [Q,R]=0}
Let $K$ be a compact, connected Lie group, and let $M$ be an even-dimensional, connected, 
$K$-equivariant $\Spinc$-manifold. Let $\mu$ be a $\Spinc$-moment map, and suppose it is proper. 
Suppose that there is an element $\xi \in \kk$ such that $\kh := \kk_{\xi}$ satisfies $([\kh, \kh]) = ([\kk^M, \kk^M])$.
 Then
\[
 Q^{\Spin^c}_K(M, \mu) = \bigoplus_{\lambda \in \Lambda_+^{\reg}} m_{\lambda} \pi_{\lambda},
\]
with $m_{\lambda} \in \Z$ given by
\begin{equation} \label{eq [Q,R]=0}
m_{\lambda} = \sum_{  \begin{array}{c} \vspace{-1.5mm} \scriptstyle{\sigma \in \cF(\kh) \text{ s.t.}}\\  \scriptstyle{\lambda - \rho_{\sigma} \in \sigma} \end{array}} Q^{\Spin^c}(M_{\lambda - \rho_{\sigma}}).
\end{equation}
Here  $\cF(\kh)$ is as in \eqref{eq def Fh}.
\end{theorem}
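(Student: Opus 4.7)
The plan is to reduce Theorem \ref{thm [Q,R]=0} to the compact case of Paradan and Vergne via the multiplicativity property of Theorem \ref{thm mult intro} together with the decomposition of the index in Definition \ref{def index proper}.

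First I would use Definition \ref{def index proper} to write
\[
\QSpinc_K(M,\mu) = \sum_{\alpha \in \Gamma} \indK(\cS|_{U_\alpha}, \mu|_{U_\alpha}),
\]
so that for any fixed $\lambda \in \Lambda_+^{\reg}$, Corollary \ref{cor fin mult} gives
\[
m_\lambda = \sum_{\alpha \in \Gamma}\bigl[ \indK(\cS|_{U_\alpha}, \mu|_{U_\alpha}) : \pi_\lambda \bigr]
\]
as a finite sum. It then suffices to match each local term with the contribution from the corresponding piece of $Z_\mu$ in \eqref{eq [Q,R]=0}.

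Next I would apply the shifting trick. Let $K\cdot\lambda$ denote the coadjoint orbit through $\lambda$, equipped with the $\Spinc$-structure whose equivariant index is $\pi_\lambda^*$ (the opposite-orientation orbit quantisation). Applying Theorem \ref{thm mult intro} with $N = K\cdot\lambda$ yields
\[
\indK(\cS_{M\times K\cdot\lambda}, \mu_{M\times K\cdot\lambda}) = \QSpinc_K(M,\mu) \otimes \pi_\lambda^*,
\]
so $m_\lambda$ is exactly the multiplicity of the trivial representation on the left. The $\Spinc$-moment map on $M \times K\cdot\lambda$ is $\mu - p$, where $p: K\cdot\lambda \hookrightarrow \kk^*$ is the inclusion; its zero set meets each $U_\alpha \times K\cdot\lambda$ in a compact set. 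Proposition \ref{prop excision} then reduces the computation of the trivial-representation multiplicity to a finite sum of indices on compact neighbourhoods.

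At this stage the computation is exactly the one carried out in \cite{Paradan14, Paradan14CRAS}: the Paradan--Vergne localisation of the compact index around the infinitesimal-stabiliser strata produces precisely the sum
\[
\sum_{\sigma \in \cF(\kh),\ \lambda - \rho_\sigma \in \sigma} Q^{\Spinc}(M_{\lambda-\rho_\sigma}),
\]
with the $\rho_\sigma$-shifts coming from the $\Spinc$-structures induced transverse to the strata. The vanishing statement of Theorem \ref{thm kM h}, when no $\kh$ satisfies $([\kh,\kh])=([\kk^M,\kk^M])$, follows from the same localisation: $\cF(\kh)$ is empty in every relevant decomposition and Paradan and Vergne's $\Spinc$-vanishing theorem forces each local contribution to vanish.

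The principal obstacle is Theorem \ref{thm mult intro}: in the noncompact setting the product moment map need not be taming, so multiplicativity is not a direct consequence of Theorem \ref{thm mult index} or of Braverman's cobordism invariance. The argument in Section \ref{sec mult} circumvents this by fixing an irreducible representation, localising to a neighbourhood on which the moment map can be replaced by a taming one that is \emph{proper} on that piece (without changing the index), and then constructing an explicit homotopy of taming maps in the sense of Definition \ref{def htp} so that Theorem \ref{thm htp invar} applies. Once this multiplicativity is in hand, the remaining reduction to the Paradan--Vergne compact-case formula is essentially bookkeeping.
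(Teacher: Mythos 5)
Your overall strategy is the right one and matches the paper's: use Theorem \ref{thm mult} as a generalised shifting trick to write $m_\cO$ as the $K$-invariant part of $\QSpinc_K(M\times(-\cO), \mu_{M\times(-\cO)})$, and then reduce to the compact-case arguments of Paradan--Vergne. You also correctly identify Theorem \ref{thm mult} as the essential new ingredient and give an accurate summary of how Section \ref{sec mult} proves it. However, there is a genuine gap in the step where you pass from the noncompact product $M\times(-\cO)$ to a situation where the Paradan--Vergne computation applies.

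First, your opening move is slightly misapplied: you decompose $\QSpinc_K(M,\mu) = \sum_\alpha \indK(\cS|_{U_\alpha},\mu|_{U_\alpha})$ using the sets $U_\alpha$ from Definition \ref{def index proper}, which are neighbourhoods of pieces of $Z_\mu \subset M$. But after the shifting trick you are working on $M\times(-\cO)$, where the relevant zero set is $Z_{\mu_{M\times(-\cO)}}$, and this set is \emph{not} contained in $(\bigcup_\alpha U_\alpha)\times(-\cO)$ — the vector field $\vf^{\mu_{M\times(-\cO)}}$ at $(m,\xi)$ vanishes when $\mu(m)-\xi$ fixes both $m$ and $\xi$, which has nothing to do with $\vf^\mu_m=0$. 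So Proposition \ref{prop excision}, applied with the sets $U_\alpha\times(-\cO)$, does not reduce the product index to compactly-supported pieces as you claim.

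Second, and more importantly, the paper's crucial intermediate step is the localisation via the function $d_\cO$ on $Z_{\mu_{M\times(-\cO)}}$ defined in \eqref{def func d}. Proposition \ref{prop vanishing d} (derived from the analytic estimates of Section \ref{sec vanishing}) kills the contribution of the part of $Z_{\mu_{M\times(-\cO)}}$ where $d_\cO>0$, and Proposition \ref{prop dO nonneg} says $d_\cO\geq 0$, so the only contribution comes from a relatively compact neighbourhood $U^{=0}$ of the compact set $d_\cO^{-1}(0)$. That is exactly Corollary \ref{cor mO loc}, and it is only at that point that the situation becomes identical to the localised expression at the start of Section 4.5.3 of \cite{Paradan14}, so that their arguments apply verbatim. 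Your proposal never introduces $d_\cO$; you instead assert that after passing to compact neighbourhoods the computation ``is exactly the one carried out in \cite{Paradan14, Paradan14CRAS}.'' But Paradan and Vergne work on a compact manifold, and without the $d_\cO$-localisation you have no compact set on which to run their argument, and you have not shown that the noncompact remainder does not contribute. The same omission affects your sketch of Theorem \ref{thm kM h}: its proof in the paper rests on \eqref{eq m O 1}, Proposition \ref{prop vanishing d} and Proposition \ref{prop dO nonneg}, not on an emptiness claim about $\cF(\kh)$.
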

%
Theorem \ref{thm [Q,R]=0} will be proved in Section \ref{sec pf [Q,R]=0}.

If the generic stabiliser $(\kk^M)$ is \myemph{Abelian}, Theorem \ref{thm [Q,R]=0} simplifies considerably. 
This occurs in particular if $\mu$ has a regular value, since then $\kk^M = \{0\}$ (see Lemma 2.4 in \cite{Mathai14}).
\begin{corollary} \label{cor cpt Ab stab}
In the setting of Theorem \ref{thm [Q,R]=0}, if $(\kk^M)$ is Abelian, then for all $\lambda \in \Lambda_+^{\reg}$,
\[
m_{\lambda} =  Q^{\Spin^c}(M_{\lambda} ).
\]
\end{corollary}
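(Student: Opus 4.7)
The plan is to directly specialise the sum in \eqref{eq [Q,R]=0} from Theorem \ref{thm [Q,R]=0}, by showing that under the Abelian hypothesis the index set $\cF(\kh)$ is a singleton and that every $\lambda \in \Lambda_+^{\reg}$ satisfies the indexing condition with the unique $\sigma$.

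First I would translate the hypothesis on $(\kk^M)$ into information about $\kh$. Since $\kk^M$ is Abelian, $[\kk^M, \kk^M] = 0$, and the defining relation $([\kh,\kh]) = ([\kk^M, \kk^M])$ forces $[\kh, \kh] = 0$, i.e.\ $\kh$ is Abelian. But $\kh$ is an infinitesimal stabiliser $\kk_\xi$ for some $\xi \in \kk$, and after conjugation I may take $\xi \in \kt$; then $\kk_\xi \supset \kt$, so $\kk_\xi$ is Abelian iff $\kk_\xi = \kt$, which happens iff $\xi$ is regular. Hence $(\kh) = (\kt)$.

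Next I would identify $\cF(\kh) = \cF(\kt)$. For $\sigma \in \cF$ the algebra $\kk_\sigma$ is the infinitesimal centraliser of any point in $\sigma$, and equals $\kt$ exactly when $\sigma$ is the relative interior $\sigma^\circ$ of $\kt^*_+$ itself (all the other faces lie on walls, whose stabilisers strictly contain $\kt$). Therefore $\cF(\kt) = \{\sigma^\circ\}$, and as recorded in the excerpt, $\rho_{\sigma^\circ} = 0$.

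Now I would verify the indexing condition for arbitrary $\lambda \in \Lambda_+^{\reg}$. Writing $\lambda = \nu + \rhoK$ with $\nu \in \Lambda_+$, the fact that $\rhoK$ lies in the strict interior of $\kt^*_+$ and $\nu \in \kt^*_+$ gives $\lambda \in \sigma^\circ$. So the condition ``$\lambda - \rho_\sigma \in \sigma$'' in \eqref{eq [Q,R]=0} is satisfied for the unique element $\sigma = \sigma^\circ$ of $\cF(\kh)$, and for that term $\lambda - \rho_\sigma = \lambda$. Consequently the sum in \eqref{eq [Q,R]=0} collapses to the single term $m_\lambda = Q^{\Spin^c}(M_\lambda)$.

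For the parenthetical claim, if $\mu$ has a regular value then by Lemma~2.4 of \cite{Mathai14} one has $\kk^M = \{0\}$, which is trivially Abelian, so the hypothesis of the corollary is automatically satisfied. There is no real obstacle here beyond being careful that $\sigma^\circ$ is indeed the unique face in $\cF(\kt)$ and that $\rhoK$ lies in its interior; both are standard facts about root systems, so no substantive difficulty arises.
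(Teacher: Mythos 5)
Your proof is correct and takes essentially the same route as the paper, which simply takes $\kh = \kt$ and observes that $\cF(\kt)$ is the singleton consisting of the open Weyl chamber, on which $\rho_\sigma = 0$. You fill in two small points the paper leaves implicit — why the Abelian hypothesis forces $(\kh) = (\kt)$, and why every $\lambda \in \Lambda_+^{\reg}$ automatically satisfies the indexing condition $\lambda - \rho_\sigma \in \sigma$ since elements of $\Lambda_+ + \rhoK$ are strictly dominant.
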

\begin{proof}
If one takes $\kh = \kt$ in Theorem \ref{thm [Q,R]=0}, then $\cF(\kh)$ only contains the interior of $\kt^*_+$. Hence $\rho_{\sigma} = 0$, for the single element $\sigma \in \cF(\kh)$.
\end{proof}
An even more special case occurs  if $\rhoK$ is a regular value of $\mu$, and one only considers the multiplicity of the trivial representation.
\begin{corollary}
If $\rhoK$ is a regular value of $\mu$, then
\[
 Q^{\Spin^c}_K(M, \mu)^K = Q^{\Spin^c}(M_{\rhoK}).
\]
\end{corollary}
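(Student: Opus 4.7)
The plan is to derive this directly from Corollary \ref{cor cpt Ab stab} by specialising to the trivial representation. The trivial representation of $K$ is $\pi_{\rho_K}$, since it has highest weight $0$ and hence infinitesimal character $0 + \rho_K = \rho_K$. Consequently, if we write
\[
Q^{\Spin^c}_K(M, \mu) = \bigoplus_{\lambda \in \Lambda_+^{\reg}} m_\lambda \pi_\lambda,
\]
then the $K$-invariant part $Q^{\Spin^c}_K(M, \mu)^K$ is precisely $m_{\rho_K}$ (viewed as an integer multiple of the one-dimensional trivial representation). So the task reduces to identifying $m_{\rho_K}$ with $Q^{\Spin^c}(M_{\rho_K})$.

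Next, I would invoke the hypothesis that $\rho_K$ is a regular value of $\mu$. By Lemma 2.4 in \cite{Mathai14} (cited just above the corollary), this forces the generic infinitesimal stabiliser $\kk^M$ to be trivial, hence in particular Abelian. This means that the hypothesis $([\kk^M,\kk^M]) = ([\kh,\kh])$ of Theorem \ref{thm [Q,R]=0} is satisfied with $\kh = \kt$, and Corollary \ref{cor cpt Ab stab} is applicable.

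Finally, applying Corollary \ref{cor cpt Ab stab} with $\lambda = \rho_K \in \Lambda_+^{\reg}$ gives
\[
m_{\rho_K} = Q^{\Spin^c}(M_{\rho_K}),
\]
which combined with the first paragraph yields the claim.

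There is essentially no obstacle in this argument: the whole content lies in Theorem \ref{thm [Q,R]=0} and Corollary \ref{cor cpt Ab stab}, together with the elementary observation that the trivial representation is indexed by $\lambda = \rho_K$ in the infinitesimal-character parametrisation. The only small subtlety is making sure one correctly identifies $\pi_{\rho_K}$ as the trivial representation (its highest weight is $\rho_K - \rho_K = 0$), but this is immediate from the conventions fixed in Subsection \ref{sec [Q,R]=0}.
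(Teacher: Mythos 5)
Your argument is correct and is exactly the paper's own proof, spelled out in a little more detail: regular value $\Rightarrow \kk^M = \{0\}$ (hence Abelian), $\pi_{\rhoK}$ is the trivial representation, and Corollary \ref{cor cpt Ab stab} with $\lambda = \rhoK$ finishes it.
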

\begin{proof}
If $\mu$ has a regular value, then $\kk^M = \{0\}$. Since $\pi_{\rhoK}$ is the trivial representation, the claim follows from Corollary \ref{cor cpt Ab stab}.
\end{proof}

Theorem \ref{thm [Q,R]=0} is an analogue  in the more general $\Spinc$-setting of  
 the main result in \cite{Zhang14} for symplectic manifolds, also proved in \cite{Paradan11}. At the same time, it generalises the result in \cite{Paradan14} from compact manifolds to proper moment maps. This development fits into a long tradition of quantisation commutes with reduction results, which started with
 Guillemin and Sternberg's seminal paper \cite{Guillemin82} for compact K\"ahler manifolds. Results for compact symplectic manifolds were proved in
 \cite{Meinrenken98, Meinrenken99, Paradan01, Zhang98}. In \cite{HochsSong15b}, we use arguments like the ones in this paper to give a short proof of the symplectic version of Theorem \ref{thm [Q,R]=0}.

\begin{example}
In Theorem 4.1 in \cite{Paradan03}, Paradan proved a version of Theorem \ref{thm [Q,R]=0}, where the $\Spinc$-structure is associated to an almost complex structure, $\mu$ is a taming moment map in the symplectic sense, the stabilisers of the action are Abelian, and an additional assumption holds, Assumption 3.6 in \cite{Paradan03}. (Compared to Theorem \ref{thm [Q,R]=0}, Theorem 4.1 in \cite{Paradan03} includes a sign to account for the possibly different orientations induced by the almost complex structure and the symplectic form.) This Assumption 3.6 allows Paradan to prove the shifting trick, as in \eqref{eq m O 1}, in his context, via a homotopy argument (see Proposition 3.7 in \cite{Paradan03}). In the present paper, the required generalisation of the shifting trick follows from a multiplicativity result for the index, Theorem \ref{thm mult} below. The proof of this result in Section \ref{sec mult} is based on a different kind of homotopy argument, without the need for the assumption in \cite{Paradan03}.

Paradan applied his result to the interesting case of coadjoint orbits parametrising discrete series representations of semisimple Lie groups. 
Suppose $G$ is a connected, semisimple Lie group with discrete series, let $K<G$ be maximal compact, and let $T<K$ be a maximal torus. Let $\pi^{G}_{\nu}$ be the discrete series representation of $G$ with infinitesimal character $\nu \in i\kt^*$. Consider the coadjoint orbit $G\cdot \nu$. The positive root system for $(\kg_{\C}, \kt_{\C})$ of roots with positive inner products with $\nu$ determines a $G$-invariant complex structure on $G\cdot \nu \cong G/T$. For noncompact $G$, this is \emph{not} compatible with the Kirillov--Kostant--Souriau symplectic form $\omega_{\nu}$ on $G\cdot \nu$, so that one cannot apply the results in \cite{Zhang14, Paradan11} in the symplectic case to the action by $K$ on $G\cdot \nu$. Instead, one can use a $G$-equivariant $\Spinc$-structure whose determinant line bundle has first Chern class $[2\omega_{\nu}]$.

In Section 5.3 of \cite{Paradan03}, Paradan shows that Assumption 3.6 in that paper is satisfied in this example. Therefore, Theorem 4.1 in \cite{Paradan03} applies. We can now also directly apply Theorem \ref{thm [Q,R]=0}, without the need to check Assumption 3.6 in \cite{Paradan03}. In any case, the result is that
\[
\pi^G_{\nu}|_K = \bigoplus_{\lambda \in \Lambda_+^{\reg}} Q^{\Spinc}\bigl( (G\cdot \nu)_{\lambda}\bigr ) \pi_{\lambda}^K.
\]
Here
for clarity, we write $\pi^K_{\lambda} := \pi_{\lambda}$. See Proposition 5.2 in \cite{Paradan03}. The realisation of $\pi^G_{\nu}|_K$ as the $\Spinc$-quantisation of $G\cdot \nu$ is Theorem 5.1 in \cite{Paradan03}.
\end{example}

\subsection{Multiplicativity of the index} \label{sec thm mult}

As in the symplectic case \cite{Zhang14, Paradan11}, the main difficulty in proving Theorem \ref{thm [Q,R]=0} is to establish a generalisation of the shifting trick. In this subsection, we state a multiplicativity property, Theorem \ref{thm mult}, of the index  of Definition \ref{def index proper}. That will imply the version of the shifting trick we need in the present context, the equality \eqref{eq m O 1}. 

Let $N$ be a \myemph{compact}, connected, even-dimensional, $K$-equivariant $\Spinc$-manifold, with spinor bundle $\cS_N \to N$ and moment map $\mu_N: N \to \kk^*$. For clarity, we denote the spinor bundle $\cS$ on $M$ by $\cS_M$, and the moment map $\mu$ on $M$ by $\mu_M$ in this setting. Let $\hat \mu_M$ and $\hat \mu_N$ be the pullbacks of $\mu_M$ and $\mu_N$ to $M \times N$, along the two projection maps. Then
\[
\mu_{M \times N} := \hat \mu_M + \hat \mu_N: M\times N \to \kk^*
\]
is a $\Spinc$-moment map for the diagonal action by $K$ on $M \times N$, for the spinor bundle $\cS_{M \times N} := \cS_M \boxtimes \cS_N$. It is proper, because $N$ is compact. Compactness of $N$ also implies that the equivariant index 
\[
\ind_K(\cS_N) \quad \in R(K)
\]
of the $\Spinc$-Dirac operator on $N$ is well-defined by \eqref{eq index cpt} in the usual way, and equals the index of Definition \ref{def index Br}, for any taming map. 
Since $\ind_K(\cS_N)$ is finite-dimensional, the tensor product
\[
\indK(\cS_M, \mu_M) \otimes \ind_K(\cS_N) \quad \in \hat R(K) 
\]
is well-defined.

The index of Definition \ref{def index proper} is multiplicative in the following sense.
\begin{theorem}[Multiplicativity] \label{thm mult}
We have
\begin{equation} \label{eq mult}
\indK(\cS_{M\times N}, \mu_{M\times N}) = \indK(\cS_M, \mu_M) \otimes \ind_K(\cS_N) \quad \in \hat R(K).
\end{equation}
\end{theorem}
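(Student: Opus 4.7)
The plan is to reduce the statement to a localised version of the known multiplicativity Theorem \ref{thm mult index} for taming maps, and then to bridge the gap between the moment map $\mu_{M\times N}=\hat\mu_M+\hat\mu_N$ and the pullback $\hat\mu_M$ by means of a homotopy of taming maps on an appropriately chosen relatively compact open set in $M$.

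I would begin by fixing an irreducible representation $\pi\in\hat K$; since both sides of \eqref{eq mult} lie in $\hat R(K)$, it suffices to match $\pi$-multiplicities. By the vanishing Theorem \ref{thm vanishing} and Corollary \ref{cor fin mult}, there is a constant $R>0$ such that the $\pi$-multiplicity of $\indK(\cS_M,\mu_M)$ is determined by pieces of $Z_{\mu_M}$ on which $\|\mu_M\|\leq R$. Since $\mu_N$ is bounded by $D:=\max_N\|\mu_N\|$ on the compact manifold $N$, only pieces of $Z_{\mu_{M\times N}}$ on which $\|\mu_M\|\leq R+D$ can contribute to the $\pi$-multiplicity of $\indK(\cS_{M\times N},\mu_{M\times N})$. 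Properness of $\mu_M$ then allows me to choose a $K$-invariant, relatively compact open set $U\subset M$ with smooth boundary and containing $\{m\in M:\|\mu_M(m)\|\leq R+D+1\}$, so that $U\times N$ contains every piece of $Z_{\mu_{M\times N}}$ relevant to $\pi$, while $\mu_M|_U$ is a taming map on $U$.

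Next, using Proposition \ref{prop mu proper} (the procedure for replacing a taming moment map on an open set by one that is proper on the same set without changing the associated index), I would replace $\mu_M|_U$ by a moment map $\tilde\mu_M$ that is proper on $U$, with the same index. Since $N$ is compact, the pullback $\hat{\tilde\mu}_M$ is then a taming map on $U\times N$: its zero set is the compact set $\{(m,n):m\in Z_{\tilde\mu_M},\ n\in N^{\tilde\mu_M(m)}\}$. The non-complete multiplicativity Theorem \ref{thm mult index} for taming maps then yields
\[
\indK(\cS_{M\times N}|_{U\times N},\hat{\tilde\mu}_M)=\indK(\cS_M|_U,\tilde\mu_M)\otimes\ind_K(\cS_N),
\]
and by additivity (Proposition \ref{prop additivity}) and the choice of $U$, the right-hand side contributes the full $\pi$-multiplicity of $\indK(\cS_M,\mu_M)\otimes\ind_K(\cS_N)$.

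It remains to connect $\hat{\tilde\mu}_M$ to the restriction of $\mu_{M\times N}=\hat\mu_M+\hat\mu_N$ to $U\times N$ (after matching via the replacement on $U$, to $\hat{\tilde\mu}_M+\hat\mu_N$). The main obstacle lies here: I would use the linear path $\mu_s:=\hat{\tilde\mu}_M+s\hat\mu_N$, $s\in[0,1]$, and invoke the homotopy invariance Theorem \ref{thm htp invar}. For this to apply, $\mu_s$ must be a homotopy of taming maps on $U\times N$; that is, the full zero set of $\vf^{\mu_s}$ in $U\times N\times[0,1]$ must be compact and disjoint from $\partial U\times N\times[0,1]$. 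This is precisely why properness, rather than mere tameness, of $\tilde\mu_M$ on $U$ is essential: at such a zero $(m,n,s)$, the element $\mu_s(m,n)$ fixes $m$ and therefore lies in a compact subset of $\kk$ (bounded independently of $s\in[0,1]$, using boundedness of $\hat\mu_N$ and of the stabiliser values by Lemma \ref{lem mu loc const}), forcing $\|\tilde\mu_M(m)\|$ to be uniformly bounded and hence $m$ to stay in a compact subset of $U$ by properness of $\tilde\mu_M$ on $U$. Verifying these boundary-avoidance and tameness estimates along the path $\mu_s$, thereby making the cobordism argument rigorous, is the technical heart of Section \ref{sec mult}, and its success yields Theorem \ref{thm mult}.
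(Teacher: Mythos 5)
Your proposal follows essentially the same plan as the paper's proof (localise to a finite piece via the vanishing theorem, replace the taming map by a proper one via Proposition \ref{prop mu proper}, run a homotopy from $\hat{\tilde\mu}_M$ to $\hat{\tilde\mu}_M + \hat\mu_N$ and invoke Theorems \ref{thm htp invar} and \ref{thm mult index}). However, there is a genuine gap in the crucial compactness argument for the homotopy. You argue that at a zero $(m,n,s)$ of $v^{\mu_s}$, the element $\mu_s(m,n)$ ``fixes $m$ and therefore lies in a compact subset of $\kk$,'' citing boundedness of $\hat\mu_N$ and Lemma \ref{lem mu loc const}. But the fact that $\mu_s(m,n) \in \kk_m$ does not by itself bound $\|\mu_s(m,n)\|$: the stabiliser subalgebras are unbounded subspaces of $\kk$, and Lemma \ref{lem mu loc const} (local constancy of $\mu$ on fixed point sets) is not what is needed here. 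In particular, properness of $\tilde\mu_M$ alone does not suffice: $\|v^{\tilde\mu_M}_m\| = \|(\tilde\mu_M(m))^M_m\|$ can stay bounded even as $\|\tilde\mu_M(m)\| \to \infty$, if $\tilde\mu_M(m)$ becomes nearly tangent to $\kk_m$. What is actually required is the fifth conclusion of Proposition \ref{prop mu proper}, that the \emph{function} $\|v^{\tilde\mu_M}\|$ is proper on $U$; then from $v^{\tilde\mu_M}_m + s\,(\mu_N(n))^M_m = 0$ one bounds $\|v^{\tilde\mu_M}_m\|$ by $\sup_{U\times N}\|(\mu_N(n))^M_m\| < \infty$ (finite by relative compactness of $U \times N$), and only then does $m$ stay in a compact subset of $U$. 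This is exactly the argument of Lemma \ref{lem Z phi cpt}, and it is the reason the fifth bullet in Proposition \ref{prop mu proper} is included; your proposal does not use it.

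A second, minor, slip: you choose $R$ so that the $\pi$-multiplicity of $\indK(\cS_M,\mu_M)$ is captured by $\{\|\mu_M\|\leq R\}$. To match the $\pi$-multiplicity of the tensor product, you in fact need to capture the $\pi'$-multiplicities of $\indK(\cS_M,\mu_M)$ for every $\pi'$ occurring in $\ind_K(\cS_N)^* \otimes \pi$; so $R$ must be taken to be $\max\{C_{\pi'} : \pi'=\pi \text{ or } \pi' \text{ occurs in } \ind_K(\cS_N)^*\otimes\pi\}$ rather than just $C_\pi$. This is the constant $C^N_\pi$ of Subsection \ref{sec mult invar}, and without it the intermediate equality $[\indK(\cS_M,\mu_M)\otimes\ind_K(\cS_N):\pi] = [\indK(\cS_M|_U,\tilde\mu_M)\otimes\ind_K(\cS_N):\pi]$ does not follow from the vanishing theorem.
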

This result will be proved in Section \ref{sec mult}.

\begin{remark}
While superficially similar, Theorem \ref{thm mult} is considerably harder to prove than the multiplicativity property of the index in Theorem \ref{thm mult index}. This due to the term $\hat \mu_N$ in  $\mu_{M\times N} = \hat \mu_M + \hat \mu_N$. Theorem \ref{thm mult index} will be used in the proof of Theorem \ref{thm mult}.
\end{remark}


\section{Vanishing multiplicities} \label{sec vanishing}

We will give an analytic proof of Theorem \ref{thm vanishing}, by showing that certain deformed Dirac operators are positive on relevant spaces of sections. Two ingredients of the proof are a deformation of the moment map $\mu$, discussed in Subsection \ref{sec deform mu}, and an estimate for harmonic oscillator-type operators in Subsection \ref{sec harm osc}. 

In Subsections \ref{sec DT}--\ref{sec harm osc}, we will consider a $K$-equivariant $\Spinc$-manifold $M$, with spinor bundle $\cS \to M$, and determinant line bundle $L \to M$. We also fix a $K$-invariant Hermitian connection $\nabla^L$ on $L$, which induces a $\Spinc$-Dirac operator $D$ on $\cS$ as in Subsection \ref{sec Spinc Dirac}, and a moment map $\mu: M \to \kk^*$ as in Definition \ref{def mom map}. 

\subsection{The square of a deformed Dirac operator} \label{sec DT}
We will use an auxiliary real parameter $T \in \R$, and consider the deformed Dirac operator
\[
D_{T\mu} = D - \ii T c(\vf^{\mu}).
\]
 Using \eqref{eq Dirac local}, one can compute that
\begin{equation} \label{eq Bochner}
D_{T\mu}^2 = D^2 - \ii T\sum_{j=1}^{\dim M} c(e_j) c(\nabLC_{e_j}\vf^{\mu}) + 2\ii T \nabla^{\cS}_{\vf^{\mu}} + T^2 \|\vf^{\mu}\|^2,
\end{equation}
in terms of  a local orthonormal frame $\{e_1, \ldots, e_{\dim M}\}$  of $TM$. 

Fix $m \in Z_{\mu}$, and suppose $\alpha := \mu(m) \in \kt^*_+$. Then $m \in M^{\alpha} \cap \mu^{-1}(\alpha)$.
In the remainder of this section, we will identify $\kk^*$ with $\kk$ via the $\Ad(K)$-invariant inner product chosen earlier. In this way, we consider $\alpha$ as an element of $\kk$, and $\mu$ as a map from $M$ to $\kk$.  
Let $\alpha^M$ be the vector field defined as in \eqref{eq def XM}. The difference between the vector fields $\alpha^M$ and $v^{\mu}$ is that $\alpha^M$ is induced by the fixed element $\alpha \in \kk $, while for $m'\in M$, the tangent vector $v^{\mu}_{m'}$ is induced by the element $\mu(m')\in \kk $, depending on $m'$. These vector fields are equal at the point $m$, but not at other points in general. 

We will need to consider more general maps from $M$ to $\kk$ than just $\mu$ in the first estimate. Let $\varphi: M \to \kk$ be any equivariant map. Then we will write 
 $\cL^{\cS}_{\varphi}$ for the operator on $\Gamma^{\infty}(\cS)$ defined by
\begin{equation} \label{eq Lie mu}
(\cL^{\cS}_{\varphi}s)(m') = (\cL^{\cS}_{\varphi(m')}s)(m'),
\end{equation}
for all $m' \in M$ and $s \in \Gamma^{\infty}(\cS)$. 
\begin{lemma} \label{lem L nabla}
Suppose $\varphi(m) = \alpha$. Then for every $\varepsilon > 0$, there is a neighbourhood $U_m$ of $m$ in $M$, such that we have the inequality
\[
\left\|
 \ii(\nabla^{\cS}_{\vf^{\varphi}} - \cL^{\cS}_{\varphi})    -\|\alpha\|^{2} - \frac{\ii}{4} \Sj c(e_{j}) c(\nabla^{TM}_{e_{j}}\alpha^{M})  \right\|\leq \varepsilon.
\]
when restricted to smooth sections of $\cS$ with compact supports inside $U_m$.
\end{lemma}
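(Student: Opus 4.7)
My plan is to reduce the operator inside the norm in the lemma to multiplication by a smooth section of $\End(\cS)$ that vanishes at $m$, and then conclude by continuity. The key input is Kostant's formula for $\Spinc$-structures: for any fixed $X\in\kk$, the difference $\cL^{\cS}_X - \nabla^{\cS}_{X^M}$ is a zeroth-order operator equal to
\[
\ii\mu_X - \tfrac{1}{4}\sum_j c(e_j)c(\nabla^{TM}_{e_j}X^M).
\]
This combines the moment-map formula $\cL^L_X - \nabla^L_{X^M} = 2\ii\mu_X$ from Definition \ref{def mom map} on the determinant line bundle with the standard spin-bundle formula on $\cS_0$, via the local splitting $\cS\cong\cS_0\otimes L^{1/2}$; the Clifford piece takes the stated form because $X^M$ is Killing, which implies the identity $\sum_j c(e_j)c(\nabla^{TM}_{e_j}X^M) = c(dX^{M,\flat})$.

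Applying this identity pointwise with $X=\varphi(m')$, and using $\vf^{\varphi}_{m'} = \varphi(m')^M_{m'}$ together with \eqref{eq Lie mu}, I obtain the pointwise identity
\[
(\cL^{\cS}_{\varphi} - \nabla^{\cS}_{\vf^{\varphi}})|_{m'} = \ii\mu_{\varphi(m')}(m') - \tfrac{1}{4}\sum_j c(e_j)c(\nabla^{TM}_{e_j}\varphi(m')^M)|_{m'}.
\]
Multiplying by $-\ii$, subtracting $\|\alpha\|^2$ and $\tfrac{\ii}{4}\sum_j c(e_j)c(\nabla^{TM}_{e_j}\alpha^M)|_{m'}$, and using linearity of $X\mapsto X^M$ and of $c$, the operator appearing inside the norm in the lemma acts as multiplication by the smooth section $A\in\Gamma^{\infty}(\End(\cS))$ given by
\[
A(m') = \bigl(\mu_{\varphi(m')}(m') - \|\alpha\|^2\bigr) + \tfrac{\ii}{4}\sum_j c(e_j)c\bigl(\nabla^{TM}_{e_j}(\varphi(m')-\alpha)^M\bigr)|_{m'}.
\]

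Both terms of $A$ vanish at $m$: the scalar term because $\mu_{\varphi(m)}(m) = \langle\mu(m),\alpha\rangle = \langle\alpha,\alpha\rangle = \|\alpha\|^2$, using $\mu(m)=\alpha$ together with $\varphi(m) = \alpha$ and the identification $\kk^*\cong\kk$ via the fixed $\Ad(K)$-invariant inner product; the Clifford term because the Lie algebra element $\varphi(m)-\alpha$ is zero, so its induced vector field vanishes identically. By smoothness of $\varphi$ and of $X\mapsto X^M$, the section $A$ is continuous on $M$, so for any $\varepsilon > 0$ there is a neighbourhood $U_m$ of $m$ on which the pointwise endomorphism norm of $A$ is bounded by $\varepsilon$. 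Since $A$ acts by pointwise multiplication, its operator norm on sections of $\cS$ supported in $U_m$ is then bounded by $\sup_{m'\in U_m}\|A(m')\| \leq \varepsilon$, as required.

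The only real technical point is careful sign-tracking in Kostant's formula in the $\Spinc$ case—in particular verifying that the sign convention \eqref{eq def XM} for $X^M$ is compatible with the signs in front of $\ii\mu_X$ and the Clifford term as they appear in the statement of the lemma. Once these conventions are pinned down, the remaining argument is a straightforward combination of pointwise Kostant, linearity in $X$, and continuity at $m$.
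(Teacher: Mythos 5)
Your proof is correct and follows essentially the same approach as the paper: both rest on the Kostant-type identity obtained from the local decomposition $\cS|_U \cong \cS_0^U \otimes L|_U^{1/2}$, followed by a continuity argument near $m$. The only organizational difference is that the paper evaluates the identity at the fixed element $\alpha$ and then separately estimates the continuity of $X \mapsto \nabla^{\cS}_{X^M} - \cL^{\cS}_X$ in $X \in \kk$, whereas you substitute $X = \varphi(m')$ directly and bound the resulting smooth endomorphism $A$ near $m$; the two are formally equivalent.
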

\begin{proof}
Because of the local decomposition \eqref{eq decomp S}, 
\[
\ii \bigl(\nabla^{\cS}_{\alpha^M} - \cL^{\cS}_{\alpha}  \bigr)  = \mu_{\alpha} + \frac{\ii}{4}  \sum c(e_{j}) c(\nabla^{TM}_{e_{j}}\alpha^M).
\]

Let $\varepsilon > 0$ be given. 
At $m$, we have
\[
\mu_{\alpha}(m) = \|\alpha\|^2.
\]
So in a small enough neighbourhood $U_m$ of $m$, we have
\[
\left| \mu_{\alpha} - \|\alpha\|^2 \right| \leq \varepsilon/2.
\]
Furthermore, the vector bundle endomorphism
$
\nabla^{\cS}_{X^M} - \cL^{\cS}_{X} 
$
depends continuously on $X \in \kk$. So by choosing $U_m$ small enough, we can ensure that
\[
\left\|
\ii ( \nabla^{\cS}_{\vf^{\varphi}} - \cL^{\cS}_{\varphi} ) - \ii ( \nabla^{\cS}_{\alpha^M} - \cL^{\cS}_{\alpha} ) \right\| \leq \varepsilon/2.
\]
on $U_m$. The claim follows.
\end{proof}


\subsection{Deforming moment maps} \label{sec deform mu}

In the expression \eqref{eq Bochner} for the squared operator $D_{T\mu}^2$, the operator 
\[
\sum_{j=1}^{\dim M} c(e_j) c(\nabLC_{e_j}\vf^{\mu})
\] 
occurs. To compare this operator to the operator
\[
\Sj c(e_{j}) c(\nabla^{TM}_{e_{j}}\alpha^{M})
\]
in Lemma \ref{lem L nabla},
 we use a local deformation of the moment map $\mu$. This is a new  addition to Tian and Zhang's analytic approach in the symplectic case \cite{Zhang98}. 

For the point $\alpha \in \kk $, one can choose a $K_{\alpha}$-invariant open subset $Z \subset \kk_{\alpha} $ containing $\alpha$, such that the map
\[
\begin{split}
K \times_{K_{\alpha}} Z &\to K\cdot Z\\
 [k, \xi] &\mapsto k\cdot \xi,
\end{split}
\]
for $k \in K$ and $\xi \in Z$,
is a diffeomorphism.

One can show that $T_m \mu(T_mM) + \kk_{\alpha} = \kk$, so that, for $Z$ small enough, $Y := \mu^{-1}(Z)$ is a smooth submanifold of $M$. Since $\mu$ is equivariant, $Y$ is $K_{\alpha}$-invariant. Furthermore, 
 we have an equivariant diffeomorphism
\begin{equation}
\label{equivariance}
 K \times_{K_{\alpha}} Y \to W := K\cdot Y
\end{equation}
onto an $K$-invariant open neighbourhood $W$ of $m$.

Let $T^{\alpha} < K_{\alpha}$ be the torus generated by $\alpha$, and let $\kt^{\alpha}$ be its Lie algebra. Let $\kh$ be the orthogonal complement to $\kt^{\alpha}$ in $\kk_{\alpha}$, and let $\mu_{\kt^{\alpha}}$ and $\mu_{\kh}$ be the projections of $\mu|_Y$ to $\kt^{\alpha} $ and $\kh$, respectively.
For $t \specialin ]0,1]$, define the map $\mu^t_Y: Y \to \kk_{\alpha}$ by
\[
\mu^t_Y := \mu_{\kt^{\alpha}} + t \mu_{\kh}.
\]
Because $\kt^{\alpha}$ is in the centre of $\kk_{\alpha}$, the decomposition $\kk_{\alpha} = \kt^{\alpha} \oplus \kh$ is $K_{\alpha}$-invariant. So both components $\mu_{\kt^{\alpha}}$ and $\mu_{\kh}$ of $\mu|_Y$ are $K_{\alpha}$-equivariant. Therefore, $\mu^t_Y$
extends $K$-equivariantly to a map 
\[
\mu^t: W \to \kk.
\] 
In particular, $\mu^1 = \mu|_W$. We denote by $\vf^{\mu^t}$ the vector field on $W$ induced 
by $\mu^t$. 

\begin{lemma} \label{lem Z mu t}
We can choose the set $Z$, and hence the sets $Y$ and $W$, such that the vector field $\vf^{\mu^t}$ has the same zeroes 
  for all $t \specialin ]0,1]$.
\end{lemma}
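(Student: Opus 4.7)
The plan is to identify the zero sets of the vector fields $\vf^{\mu^t}$ through the equivariant identification $W \cong K \times_{K_\alpha} Y$ of \eqref{equivariance}. Since each $\mu^t$ is $K$-equivariant and $\mu^t|_Y$ takes values in $\kk_\alpha$, the zero set of $\vf^{\mu^t}$ on $W$ equals $K\cdot Z^Y_t$, where
\[
Z^Y_t := \{p \in Y : \mu^t(p) \in (\kk_\alpha)_p\}
\]
and $(\kk_\alpha)_p := \kk_\alpha \cap \kk_p$. It therefore suffices to choose $Z$ so that $Z^Y_t$ is independent of $t \specialin ]0,1]$.

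The key observation is that $\mu^t(p) = \mu_{\kt^\alpha}(p) + t\mu_\kh(p)$ is affine in $t$, while $(\kk_\alpha)_p$ is a linear subspace of $\kk_\alpha$. Hence if $\mu^{t_0}(p) \in (\kk_\alpha)_p$ for some $t_0 \specialin ]0,1]$ \emph{and} $\mu_{\kt^\alpha}(p) \in (\kk_\alpha)_p$, then by subtraction $t_0\mu_\kh(p) \in (\kk_\alpha)_p$, whence $\mu_\kh(p) \in (\kk_\alpha)_p$, and consequently $\mu^t(p) \in (\kk_\alpha)_p$ for every $t$. The whole problem therefore reduces to showing that, after shrinking $Z$, every $p \in Y$ at which $\mu^{t_0}(p) \in (\kk_\alpha)_p$ for some $t_0 \specialin ]0,1]$ automatically satisfies $\mu_{\kt^\alpha}(p) \in (\kk_\alpha)_p$.

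My strategy for this reduction is to shrink $Z$ until every such $p$ is forced to lie in $M^{T^\alpha}$. Once this is achieved, $T^\alpha \subset K_p$ and hence $\kt^\alpha \subset (\kk_\alpha)_p$, so in particular $\mu_{\kt^\alpha}(p) \in \kt^\alpha \subset (\kk_\alpha)_p$, as needed. Note also that Lemma \ref{lem mu loc const} applied to $H = T^\alpha$ forces $\mu_{\kt^\alpha}$ to be locally constant on $M^{T^\alpha}$, with value $\alpha$ on the component through $m$, which is consistent with and reinforces this picture.

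The main obstacle is therefore the remaining geometric step: verifying that, after shrinking $Z$ sufficiently, the zero set of each $\vf^{\mu^{t_0}}$ in $Y$ is contained in $M^{T^\alpha}$. I would attack this through a $K$-equivariant local normal form for $(M,\mu)$ near $K\cdot m$---the Marle-Guillemin-Sternberg model, or, in the $\Spinc$-setting, the analogue developed by Paradan and Vergne---which identifies a neighbourhood of $K \cdot m$ with $K \times_{K_m}(V_1 \oplus V_2)$ for suitable $K_m$-modules, and writes $\mu$ in an explicit polynomial form. In that model, points of $Y$ with a strictly smaller $T^\alpha$-stabiliser can be separated from $\mu^{-1}(\alpha)$ by a uniform gap in $\|\mu - \alpha\|$, since the offending coordinates of $\mu$ vanish precisely on the $T^\alpha$-fixed locus; shrinking $Z$ around $\alpha$ thus eliminates them from $Y$. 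Alternatively, upper semicontinuity of stabiliser dimension combined with compactness of $\mu^{-1}(\overline{Z})$ provides a softer route to the same conclusion. Either way, once this step is in hand, the paragraphs above assemble to yield the $t$-invariance of $Z_{\vf^{\mu^t}}$ on $W$.
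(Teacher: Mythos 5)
Your proposal follows the same broad decomposition as the paper: reduce to the slice $Y$, handle the $T^{\alpha}$-fixed locus by an affine/linear argument, and dispose of the complement by shrinking $Z$. The affine observation that $\mu^t(p) \in (\kk_{\alpha})_p$ for one $t_0$ together with $\mu_{\kt^{\alpha}}(p) \in (\kk_{\alpha})_p$ forces it for all $t$ is exactly the paper's observation that $\vf^{\mu^t}|_{Y \cap M^{\alpha}} = t\,\vf^{\mu_{\kh}}|_{Y\cap M^{\alpha}}$, so that part matches.

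Where you leave things underspecified is the ``main obstacle'', and there are two points worth flagging. First, shrinking $Z$ controls $\mu(Y)$, but you need to control $\mu^{t}(Y)$ \emph{uniformly in} $t$. The paper does this via the orthogonal decomposition $\kk_{\alpha}=\kt^{\alpha}\oplus\kh$, which gives
\[
\|\mu^{t}(m') - \alpha\|^{2} = \|\mu_{\kt^{\alpha}}(m')-\alpha\|^{2} + t^{2}\|\mu_{\kh}(m')\|^{2} \leq \|\mu(m')-\alpha\|^{2},
\]
so shrinking $Z$ to a $\delta$-ball around $\alpha$ automatically confines $\mu^{t}(Y)$ to that ball for every $t \specialin ]0,1]$. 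This Pythagoras-type inequality is needed and should be said explicitly; without it one cannot deduce anything about $\mu^{t_0}$ from smallness of $Z$. Second, the ingredient that makes the shrinking work is not quite ``upper semicontinuity of the stabiliser dimension'' but the finite orbit type theorem: since $Y$ can be taken relatively compact (properness of $\mu$), the $K_{\alpha}$-action on $Y$ has finitely many infinitesimal stabiliser types, each of which determines a closed conjugation-invariant subset of $\kk_{\alpha}$ not containing $\alpha$ (because $\alpha\notin (\kk_{\alpha})_p$ for $p\notin M^{T^\alpha}$), and the minimum of the positive distances to $\alpha$ gives the $\delta$. Upper semicontinuity of $\dim\kk_p$ alone does not give this uniform gap (a sequence $p_n \notin M^{T^\alpha}$ converging into $M^{T^\alpha}$ is not excluded); one really needs the finiteness of stabiliser types. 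Your MGS-model route would also work, but is heavier and, since it is local near a single orbit $K\cdot m$, would still need to be combined with the same compactness argument to cover all of $Y$. With those two points filled in, your argument reduces to the paper's.
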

\begin{proof}
By $K$-invariance of $\vf^{\mu^t}$, it is enough to show that the set of zeroes of $\vf^{\mu^t}|_Y$ is independent of $t \specialin ]0,1]$, for $Y$ small enough.

To see that this is true, first note that for all $m' \in Y \cap M^{\alpha}$, we have
\[
\vf^{\mu_{\kt^{\alpha}}}_{m'} = 0,
\]
so 
\[
\vf^{\mu^t}|_{Y \cap M^{\alpha}} = t \vf^{\mu_{\kh}}|_{Y \cap M^{\alpha}},
\]
which has the same zeroes for all nonzero $t$. 

Next, consider the set $Y \cap (M \setminus M^{\alpha})$. By choosing $Z$, and hence $Y$ by properness of $\mu$, to be relatively compact, we can ensure that the action by $K_{\alpha}$ on $Y$ only has finitely many infinitesimal stabiliser types. Each of these stabiliser types defines a closed subset of $\kk_{\alpha}$, which does not contain $\alpha$. Therefore, there is a $\delta > 0$ such that for all $X \in \kk_{\alpha}$ with $\|X-\alpha\| \leq \delta$ and all $m' \in Y \cap (M \setminus M^{\alpha})$, we have $X^M_{m'} \not= 0$. Choose $Z$ so that all elements of $Z$ lie within a distance $\delta$ of $\alpha$. Then for all $m' \in Y \cap (M \setminus M^{\alpha})$ and any $t \specialin ]0,1]$,
\begin{multline*}
\| \mu^t(m') - \alpha  \|^2 = \|\mu_{\kt^{\alpha}}(m') - \alpha\|^2 + t^2 \|\mu_{\kh}(m')\|^2\\
 \leq  \|\mu_{\kt^{\alpha}}(m') - \alpha\|^2 + \|\mu_{\kh}(m')\|^2 = \| \mu(m') - \alpha  \|^2 \leq \delta^2. 
\end{multline*}
Hence $\vf^{\mu^t}_{m'} \not=0$ for any $t \specialin ]0,1]$. 
\end{proof}
Because the vector field $\vf^{\mu^t}$ has the same set of zeroes for all $t \specialin ]0,1]$, Theorem \ref{thm htp invar} implies that for all such $t$, if $W \cap Z_{\mu}$ is compact,
\begin{equation} \label{eq index mu t}
\indK(\cS|_W, \mu^t) = \indK(\cS|_W, \mu|_W).
\end{equation}

\subsection{An estimate for deformed moment maps}

The reason for introducing the deformation $\mu^t$ of $\mu|_W$ 
in Subsection \ref{sec deform mu}
is the following estimate.
\begin{lemma} \label{lem est mu alpha}
For any $\varepsilon > 0$, there is a $\delta \specialin ]0,1]$ such that 
for all $t \specialin ]0, \delta]$ 
and any orthonormal basis $\{f_1, \ldots, f_{\dim Y}\}$ of $T_mY$, one has at $m$,
\begin{equation} \label{eq est V mu alpha}
\left\| \sum_{j=1}^{\dim M} c(e_j) c(\nabLC_{e_j}\vf^{\mu^t}) -  \sum_{j=1}^{\dim Y} c(f_j) c(\nabLC_{f_j}\alpha^M)  \right\| \leq \varepsilon.
\end{equation}
\end{lemma}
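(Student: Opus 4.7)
The plan is to decompose the first sum according to the slice structure at $m$ and show that the contribution of basis vectors transverse to $T_mY$ vanishes, while the tangential contribution differs from $\sum_j c(f_j)c(\nabla^{TM}_{f_j}\alpha^M)$ by a term of order $t$. First I would extend $\{f_1,\dots,f_{\dim Y}\}$ to an orthonormal basis $\{e_1,\dots,e_{\dim M}\}$ of $T_mM$ by adjoining vectors $h_1,\dots,h_{\dim M-\dim Y}$ normal to $T_mY$. Via the slice diffeomorphism \eqref{equivariance}, together with the standard moment-map observation that $\mu(m)=\alpha$ forces $K_m\subseteq K_\alpha$, the normal directions are identified with $\kk_\alpha^\perp$ through the infinitesimal orbit map, so I may write $h_i = X_i^M|_m$ for suitable $X_i\in\kk_\alpha^\perp$.

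The computation would rest on the identity that for any smooth $K$-equivariant $\varphi:W\to\kk$ with $\varphi(m)=\alpha$ and any $v\in T_mM$,
\[
\nabla^{TM}_v \vf^{\varphi}\big|_m \;=\; \bigl(d_m\varphi(v)\bigr)^{M}\big|_m \;+\; \nabla^{TM}_v\alpha^M\big|_m,
\]
which I would derive by differentiating $s\mapsto \varphi(\gamma(s))^M|_{\gamma(s)}$ along a curve $\gamma$ tangent to $v$ at $m$, writing $\varphi = \alpha + (\varphi-\alpha)$, and using that $(\varphi-\alpha)(m)=0$ kills the connection contribution of the second summand.

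Applied to a transverse direction $h_i = X_i^M|_m$ with $X_i\in\kk_\alpha^\perp$, $K$-equivariance of $\mu^t$ combined with $\mu^t(m)=\alpha$ yields $\mu^t(\exp(-sX_i)\cdot m) = \Ad(\exp(-sX_i))\alpha$, so $d_m\mu^t(h_i)=[\alpha,X_i]$ and hence $(d_m\mu^t(h_i))^M|_m=[\alpha,X_i]^M|_m$. On the other hand, the sign convention of \eqref{eq def XM} makes $X\mapsto X^M$ a Lie algebra homomorphism, so $[\alpha^M,X_i^M]=[\alpha,X_i]^M$; evaluating at $m$ and using $\alpha^M|_m=0$ gives $\nabla^{TM}_{h_i}\alpha^M|_m = -[\alpha,X_i]^M|_m$. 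The two contributions will cancel, so the transverse basis vectors contribute nothing. For a tangential $f_j\in T_mY$, the splitting $\mu^t|_Y = \mu_{\kt^\alpha}+t\mu_\kh$ yields $d_m\mu^t(f_j) = d_m\mu_{\kt^\alpha}(f_j)+t\,d_m\mu_\kh(f_j)$; since $d_m\mu_{\kt^\alpha}(f_j)\in\kt^\alpha$ and $T^\alpha$ fixes $m$, one has $(d_m\mu_{\kt^\alpha}(f_j))^M|_m = 0$, so
\[
\nabla^{TM}_{f_j}\vf^{\mu^t}\big|_m \;=\; t\,\bigl(d_m\mu_\kh(f_j)\bigr)^M\big|_m \;+\; \nabla^{TM}_{f_j}\alpha^M\big|_m.
\]

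Summing the Clifford contractions and subtracting the second sum of the lemma will therefore reduce the difference to
\[
t\sum_{j=1}^{\dim Y} c(f_j)\,c\Bigl(\bigl(d_m\mu_\kh(f_j)\bigr)^M\big|_m\Bigr).
\]
This expression is invariant under orthonormal changes of basis of $T_mY$, so its operator norm is a constant $C\ge 0$ depending only on $\mu_\kh$ and the geometry at $m$. Choosing $\delta := \min\bigl(1,\,\varepsilon/(C+1)\bigr)$ then delivers the estimate for every $t\in\,]0,\delta]$ and every orthonormal basis $\{f_j\}$ of $T_mY$. The main obstacle is the sign check in the transverse cancellation: with the opposite convention for induced vector fields the two summands would add rather than cancel, so the homomorphism property of $X\mapsto X^M$ coming from the minus sign in \eqref{eq def XM} is essential.
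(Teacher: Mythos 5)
Your proof is correct, and it reaches the lemma via essentially the same decomposition $T_mM \cong T_mY \oplus \kk_\alpha^\perp$ that the paper uses, but the two treatments of the transverse directions differ in an interesting way, so a short comparison is worth recording. The paper kills the transverse contribution in one stroke: for any $X \in \kk$, torsion-freeness gives $\nabLC_{X^M}\vf^{\mu^t} = \nabLC_{\vf^{\mu^t}}X^M - \cL^{TM}_X\vf^{\mu^t}$, and both terms on the right vanish at $m$ because $\vf^{\mu^t}_m = 0$ and $\vf^{\mu^t}$ is $K$-invariant. You instead isolate the linear identity $\nabLC_v\vf^{\varphi}|_m = (d_m\varphi(v))^M|_m + \nabLC_v\alpha^M|_m$ (valid for any equivariant $\varphi$ with $\varphi(m)=\alpha$) and then exhibit an explicit cancellation $[\alpha,X_i]^M|_m - [\alpha,X_i]^M|_m$ coming from $\Ad$-equivariance of $\mu^t$ on the one side and the homomorphism property of $X \mapsto X^M$ on the other. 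Your version makes the role of the sign convention in \eqref{eq def XM} visible, which is a genuine pedagogical gain; the paper's torsion-freeness argument is shorter and also shows the transverse term vanishes for \emph{all} $X \in \kk$, not just $X \in \kk_\alpha^\perp$. For the tangential part, your use of the same linear identity together with the splitting $d_m\mu^t(f_j) = d_m\mu_{\kt^\alpha}(f_j) + t\,d_m\mu_\kh(f_j)$ and the vanishing $(d_m\mu_{\kt^\alpha}(f_j))^M|_m = 0$ (since $\kt^\alpha$ acts trivially at $m \in M^\alpha$) is precisely what the paper's Leibniz computation $\nabLC_{f_j}\vf^{\mu^t}|_m = \sum_k \bigl(\mu^t_k(m)(\nabLC_{f_j}X_k^Y)_m + f_j(\mu^t_k)(m)(X_k^Y)_m\bigr)$ amounts to after collecting $\sum_k \mu^t_k(m)X_k = \alpha$; your residual term $t\sum_j c(f_j)c\bigl((d_m\mu_\kh(f_j))^M|_m\bigr)$ and the paper's $\sum_{k>\dim\kt^\alpha} c(\grad_m\mu^t_k)c((X_k^Y)_m)$ are the same operator written in the two dual bases. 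Your observation that this residual is invariant under orthonormal change of basis of $T_mY$, so a single $\delta$ serves all frames, is also implicit in the paper's basis-free expression.

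One small point you should make explicit: your estimate produces $\nabLC_{f_j}\alpha^M|_m$, while the paper's computation on the slice produces $\nabLC_{f_j}\alpha^Y|_m$. These coincide because $\alpha^M$ is tangent to the $K_\alpha$-invariant slice $Y$, vanishes at $m$, and its linearisation at $m$ preserves $T_mY$, so the covariant derivative in the direction of $f_j \in T_mY$ is automatically tangential and agrees with the intrinsic one. This is not a gap in your proof—your computation works directly in $M$—but it is the reason the two formulations of the lemma agree.
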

\begin{proof}
We use the decomposition
\[
T_mM \cong T_mY \oplus \kk_{\alpha}^{\perp}. 
\]
Note that for all $X \in \kk$ and for all $t$, torsion-freeness of the Levi--Civita connection implies that
\[
\nabLC_{X^M}\vf^{\mu^t} = \nabLC_{\vf^{\mu^t}}X^M - \cL^{TM}_X \vf^{\mu^t}.
\]
Because $\vf^{\mu^t}$ is zero at $m$, and $K$-equivariant, the right hand side of this equality vanishes at $m$. So
\begin{equation} \label{eq nab V X mu t zero}
\bigl(\nabLC_{X^M}\vf^{\mu^t} \bigr)_m = 0.
\end{equation}
Let $\{f_1, \ldots, f_{\dim M}\}$ be an orthonormal basis of $T_mM$ such that $f_j \in T_mY$ if $j \leq \dim Y$, and $f_j \in \kk_{\alpha}^{\perp}$ if $j>\dim Y$. Then \eqref{eq nab V X mu t zero} implies that for all $j > \dim {Y}$
\[
\bigl(\nabLC_{f_j}\vf^{\mu^t} \bigr)_m = 0.
\]
Since the operator 
\[
\sum_{j=1}^{\dim M} c(e_j) c(\nabLC_{e_j}\vf^{\mu^t}) 
\]
is independent of the frame $\{e_1,\ldots, e_{\dim M}\}$, we find that at $m$, it equals
\begin{equation} \label{eq op Clifford}
\sum_{j=1}^{\dim Y} c(f_j) c\bigl(  (\nabLC_{f_j}\vf^{\mu^t})_m \bigr). 
\end{equation}

Next, let $\{X_1, \ldots, X_{\dim {\kk_{\alpha}}}\}$ be an orthonormal basis of $\kk_{\alpha}$, such that $X_k \in \kt^{\alpha}$ if $k \leq \dim {\kt^{\alpha}}$, and $X_k \in \kh$ if $k > \dim {\kt^{\alpha}}$. For every $k$, set
\[
\mu^t_k := \mu^t_{X_k}|_Y \quad \in C^{\infty}(Y). 
\]
Since $\mu^t(Y) \subset \kk_{\alpha}$, we have
\[
\mu^t|_Y = \sum_{k = 1}^{\dim {\kk_{\alpha}}} \mu^t_k X_k.
\]
Also note that $Y$ is $K_{\alpha}$-invariant, so $X_k^M|_Y = X_k^Y$ for all $k$. Therefore, for all $j$,
\[
\left(\nabLC_{f_j} \vf^{\mu^t} \right)_m = \sum_{k = 1}^{\dim {\kk_{\alpha}}} 
\left( \mu^t_k(m) (\nabLC_{f_j}X_k^Y)_m +  f_j(\mu^t_k)(m) (X_k^Y)_m \right).
\]
We find that \eqref{eq op Clifford} equals
\begin{equation} \label{eq op Clifford 2}
 \sum_{j=1}^{\dim Y}  c(f_j) c\bigl( ( \nabLC_{f_j}\alpha^Y)_m  \bigr) +
 \sum_{k = 1}^{\dim {\kk_{\alpha}}}  c(\grad_m \mu^t_k) c\bigl((X_k^Y)_m \bigr).
\end{equation}

To bound the second term in \eqref{eq op Clifford 2}, note that for $k \leq \dim {\kt_{\alpha}}$, we have $(X_k^Y)_{m} = 0$, and for $k > \dim {\kt^{\alpha}}$, 
\[
\mu^t_k(m) = t\mu_{X_k}(m). 
\]
Let $\varepsilon > 0$. If we choose $\delta \specialin ]0,1]$ such that
\[
\delta \sum_{k = \dim {\kt^{\alpha}}+1}^{\dim {\kk_{\alpha}}} \|\grad_m \mu_{X_k}\| \cdot  \|(X_k^Y)_m\| \leq \varepsilon,
\]
then we see that \eqref{eq est V mu alpha} holds for all $t \specialin ]0, \delta]$, at the point $m$. 
\end{proof}

\begin{remark}
If $K$ is a torus, then $K_{\alpha} = K$, so that $Y = W$ is an open neighbourhood of $m$ in $M$. This simplifies the arguments in this subsection.
\end{remark}


\subsection{Vector fields and Lie derivatives}

At several points, we will use a convenient local expression for vector fields around their zeroes.
\begin{lemma} \label{lem loc V}
Let $\vf \in \cX(M)$ be a vector field. Let $m \in M$ such that $\vf_m = 0$. Then there are $a_1, \ldots, a_{\dim_M} \geq 0$, there is an orthogonal automorphism $J$ of $TM$, defined near $m$, 
and there is a local orthonormal frame $\{ e_1, \ldots, e_{\dim M} \}$ of $TM$ near $m$, such that in the normal coordinates $y = (y_1, \ldots, y_{\dim M})$ associated to the corresponding basis of $T_mM$,
\begin{equation}
\label{expression}
\vf_y = \Sj a_j y_j J e_j + \cO(\|y\|^2).
\end{equation}
\end{lemma}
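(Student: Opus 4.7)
The plan is to reduce the statement to a linear algebra fact at the single point $m$, namely a singular value decomposition of the linearization of $\vf$ there, and then transport this data to a neighbourhood using normal coordinates and parallel transport of frames.

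First, since $\vf_m = 0$, the covariant derivative $A := (\nabla^{TM} \vf)_m : T_mM \to T_mM$ is a well-defined linear endomorphism, independent of the choice of connection: any other connection differs from $\nabla^{TM}$ by a tensor, so this tensor applied to $\vf_m = 0$ vanishes. Next I would apply the singular value decomposition to $A$ on the Euclidean space $(T_mM, g_m)$: there exist orthonormal bases $\{e_1(m), \ldots, e_{\dim M}(m)\}$ and $\{f_1(m), \ldots, f_{\dim M}(m)\}$ of $T_mM$, and nonnegative real numbers $a_1, \ldots, a_{\dim M}$, such that
\[
A e_j(m) = a_j f_j(m), \qquad j = 1, \ldots, \dim M.
\]
Define $J_m \in \OO(T_mM)$ by $J_m e_j(m) = f_j(m)$.

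Now introduce normal coordinates $y = (y_1, \ldots, y_{\dim M})$ centred at $m$ and adapted to the basis $\{e_j(m)\}$: a point with coordinates $y$ is $\exp_m(\sum_j y_j e_j(m))$. Extend $\{e_j(m)\}$ to a local orthonormal frame $\{e_j\}$ of $TM$ near $m$ by parallel transport along radial geodesics from $m$ (this is automatically orthonormal since $\nabla^{TM}$ preserves $g$). Define an orthogonal bundle automorphism $J$ near $m$ in the same way, by parallel-transporting $J_m$ along radial geodesics; equivalently, $J e_j = f_j$, where $\{f_j\}$ is the parallel transport of $\{f_j(m)\}$.

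Finally, Taylor-expand $\vf$ in these coordinates. Writing $\vf = \sum_j \vf^j e_j$ for smooth functions $\vf^j$, and noting that at $m$ the Christoffel symbols of $\nabla^{TM}$ in normal coordinates vanish, we have $\vf^j(0) = 0$ and
\[
\vf^j(y) = \sum_k y_k (\partial_k \vf^j)(0) + \cO(\|y\|^2) = \sum_k y_k \langle A e_k(m), e_j(m) \rangle + \cO(\|y\|^2).
\]
Using $A e_k(m) = a_k f_k(m) = a_k J_m e_k(m)$, this gives
\[
\vf_y = \sum_{j, k} y_k a_k \langle J_m e_k(m), e_j(m) \rangle e_j(m) + \cO(\|y\|^2) = \sum_k a_k y_k J e_k + \cO(\|y\|^2),
\]
which is the desired expression \eqref{expression}. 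The only mildly subtle step is the observation that $A$ is well-defined and that its SVD can be phrased as $A e_j = a_j J e_j$ for a single orthogonal operator $J$; everything else is a routine normal-coordinate computation.
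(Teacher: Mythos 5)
Your proof is correct and is essentially the same argument as the paper's: in both cases the content is a singular value decomposition of the linearisation of $\vf$ at $m$, read off against an adapted orthonormal frame in normal coordinates. The paper phrases the decomposition in matrix form (first a polar decomposition $B=\sqrt{B^TB}\,U$, then a spectral decomposition of $\sqrt{B^TB}$, which together amount to the SVD), starting from an arbitrary frame and then changing basis, whereas you work invariantly with the operator $A=(\nabla^{TM}\vf)_m$, apply SVD directly, and extend the resulting frame and $J$ by parallel transport along radial geodesics; the two computations are identical in substance.
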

\begin{proof}
Let $\{f_1, \ldots, f_{\dim M}\}$ be any local orthonormal frame of $TM$ near $m$. Let $x = (x_1, \ldots, x_{\dim M})^T$ (viewed as a column vector) be the normal coordinates associated to the basis $\{(f_1)_m, \ldots, (f_{\dim M})_m\}$ of $T_mM$. Since $\vf_m = 0$, there are $b_{jk} \in \R$ such that
\[
\vf_x = \sum_{j, k = 1}^{\dim M} b_{jk} x_j f_k + \cO(\|x\|^2).
\]
Let $B$ be the matrix with elements $(b_{jk})_{j, k = 1}^{\dim M}$. Write
\[
B = \sqrt{B^TB}U,
\]
for $U \in \OO(\dim M)$, and
\[
\sqrt{B^TB} = WD_a W^T,
\]
for $W \in \SO(\dim M)$, and for a diagonal matrix 
\[
D_a := \diag(a_1, \ldots, a_{\dim M}),
\]
with each of the entries $a_j$ nonnegative. For any vector $u \in \R^n$, and any set of vectors $w = (w_1, \ldots, w_n) \in \R^n$, we write
\[
uw := \sum_{j=1}^n u_j w_j  \quad \in \R^n.
\]
Then
\[
\begin{split}
\sum_{j, k = 1}^{\dim M} b_{jk} x_j f_k &= x^T B f \\
	&= x^T WD_aW^TUf \\
	&= (W^TU x)^T (W^T U W) D_a W^T U f.
\end{split}
\]
Set $e_j := W^TU f_j$. The normal coordinates associated to the basis 
\[
\{(e_1)_m, \ldots, (e_{\dim M})_m\}
\] 
of $T_mM$ are 
$y := WU^T x$.
Hence one obtains (\ref{expression}) by defining $J := W^T U W$.
\end{proof}

If $V$ is any real vector space, and $B \in \End(V)$, then we write
\[
|B| := \sqrt{B^TB}.
\]
\begin{lemma} \label{lem tr abs Lie}
In the setting of Lemma \ref{lem loc V}, the Lie derivative
\[
\cL_{\vf}^{T_mM} \in \End(T_mM)
\]
satisfies
\[
\tr|\cL_{\vf}^{T_mM}| = \sum_{j=1}^{\dim M} a_j.
\]
\end{lemma}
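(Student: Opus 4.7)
The plan is to realise $\cL_{\vf}^{T_mM}$ as the linear part of $\vf$ at $m$ in the normal coordinates provided by Lemma \ref{lem loc V}, and then diagonalise it via the singular value decomposition baked into the statement of that lemma.

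First, I would recall the standard fact that at a zero of a vector field $\vf$, the Lie derivative acting on tangent vectors coincides, up to sign, with the Jacobian of the coefficient functions of $\vf$. Concretely, if $X = \sum_k X^k \partial_{y_k}$ is any local extension of a tangent vector $X_m \in T_mM$, then
\[
(\cL_{\vf}X)_m = [\vf, X]_m = -\sum_{j,k} X^j(m)\,(\partial_{y_j} V^k)(m)\,\partial_{y_k},
\]
because the terms involving derivatives of $X^k$ carry the factor $V^j(m)=0$. Hence, in the basis $\{(e_1)_m, \ldots, (e_{\dim M})_m\}$ of $T_mM$, the endomorphism $\cL_{\vf}^{T_mM}$ is represented by $-\bigl(\partial_{y_j} V^k \bigr)(m)$, whose absolute value (a positive operator) is unaffected by the overall sign.

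Second, I would plug in the expansion from Lemma \ref{lem loc V}. Writing the orthogonal endomorphism $J$ in the frame $\{e_k\}$ as a matrix with entries $J^k_j$ determined by $Je_j = \sum_k J^k_j e_k$, the expansion \eqref{expression} reads
\[
V^k(y) = \sum_{j=1}^{\dim M} a_j J^k_j\, y_j + \cO(\|y\|^2),
\]
so that $(\partial_{y_j} V^k)(0) = a_j J^k_j$. In matrix terms, the Jacobian equals $J D_a$, with $D_a = \diag(a_1, \ldots, a_{\dim M})$.

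Finally, I would exploit orthogonality of $J$ and non-negativity of the $a_j$ to compute
\[
|\cL_{\vf}^{T_mM}| = \sqrt{(JD_a)^T (JD_a)} = \sqrt{D_a J^T J D_a} = \sqrt{D_a^2} = D_a,
\]
whose trace is $\sum_{j=1}^{\dim M} a_j$. There is no real obstacle here; once Lemma \ref{lem loc V} is in hand, the result is a short piece of linear algebra, and the only thing to be careful about is that the sign ambiguity in identifying $\cL_{\vf}^{T_mM}$ with the Jacobian is harmless after taking $|\cdot|$.
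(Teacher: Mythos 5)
Your proof is correct and follows essentially the same route as the paper's: you compute the Lie derivative at a zero via the bracket formula to get the matrix $-JD_a$ in the frame $\{e_j\}$, and then use orthogonality of $J$ and non-negativity of the $a_j$ to conclude $|\cL_{\vf}^{T_mM}| = D_a$. The paper states the expression for $\cL_{\vf}^{T_mM}$ without the intermediate Jacobian computation, but the argument is the same.
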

\begin{proof}
In the situation of Lemma \ref{lem loc V}, one can compute that the Lie derivative operator $\cL_{\vf}^{T_mM}$ on $T_mM$ equals
\[
\cL_{\vf}^{T_mM} = -\Sj a_j J (e_j)_m \otimes (e_j^*)_m, 
\]
where $e_j^*$ is the one-from dual to $e_j$.
So with respect to the basis $\{ (e_1)_m, \ldots, (e_{\dim M})_m \}$ of $T_mM$, the map $\cL^{T_mM}_\vf$ has matrix
\[
\mat \cL^{T_mM}_\vf = - \mat (J) D_a,
\]
where, as before, $D_a$ is the diagonal matrix with entries $\{a_1, \ldots, a_{\dim M}\}$. Hence $\mat |\cL^{T_mM}_\vf| = D_a$.
\end{proof}

\subsection{A local estimate}

Lemma \ref{lem tr abs Lie} in particular yields an expression for the trace $\tr|\cL^{T_mM}_{\alpha}|$ in terms of the local expression in Lemma \ref{lem loc V} of the vector field $\alpha^M$. This allows us to prove the following estimate.

%
%
%
%

\begin{lemma} \label{lem est 1}
At $m$, we have
\[
\left \| \sum_{j=1}^{\dim M} c(e_j)c(\nabLC_{e_j} \alpha^M) \right\|\leq \tr|\cL_{\alpha}^{T_mM}|.
\]
\end{lemma}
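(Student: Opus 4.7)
The plan is to reduce the estimate to a straightforward triangle inequality after diagonalising the linearisation of $\alpha^M$ at $m$.

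Since $m \in M^\alpha$, the vector field $\alpha^M$ vanishes at $m$, so Lemma \ref{lem loc V} applies: there exist nonnegative scalars $a_1,\dots,a_{\dim M}$, an orthogonal bundle automorphism $J$ of $TM$ defined near $m$, and a local orthonormal frame $\{e_1,\dots,e_{\dim M}\}$ such that, in the associated normal coordinates $y$ at $m$,
\[
\alpha^M_y = \sum_{j=1}^{\dim M} a_j y_j J e_j + \cO(\|y\|^2).
\]
Because these are normal coordinates, the Christoffel symbols of $\nabla^{TM}$ vanish at $m$, so the covariant derivative there reduces to the coordinate derivative. I would then read off that at $m$,
\[
\nabla^{TM}_{e_k} \alpha^M = a_k J e_k \qquad (k = 1,\dots,\dim M).
\]
Substituting, the left-hand side of the inequality becomes
\[
\left\| \sum_{j=1}^{\dim M} a_j\, c(e_j) c(J e_j) \right\|.
\]

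Next I would use that for any unit tangent vector $v$, the Clifford action $c(v)\in \End(\cS_m)$ is skew-adjoint and satisfies $c(v)^2 = -1$, hence $\|c(v)\| = 1$. Since $J$ is orthogonal, the vectors $e_j$ and $Je_j$ are unit vectors, so $\|c(e_j) c(Je_j)\| \leq 1$. The triangle inequality then gives
\[
\left\| \sum_{j=1}^{\dim M} a_j\, c(e_j) c(J e_j) \right\| \leq \sum_{j=1}^{\dim M} a_j.
\]

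Finally, by Lemma \ref{lem tr abs Lie} applied to the vector field $\alpha^M$, the right-hand side equals $\tr|\cL_\alpha^{T_mM}|$, completing the proof. There is no real obstacle here: the content of the lemma is packaged entirely into the preceding Lemmas \ref{lem loc V} and \ref{lem tr abs Lie}, and what remains is the operator-norm bound on a sum of products of unit Clifford elements, which is immediate.
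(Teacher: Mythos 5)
Your proof is correct and follows the same route as the paper: apply Lemma \ref{lem loc V} to express $\alpha^M$ locally, use that normal-coordinate Christoffel symbols vanish at $m$ to compute $\nabla^{TM}_{e_j}\alpha^M = a_j Je_j$ there, bound the resulting Clifford sum by $\sum_j a_j$ via the triangle inequality and $\|c(v)\|=1$ for unit $v$, and conclude with Lemma \ref{lem tr abs Lie}. The paper leaves the triangle-inequality step implicit, which you usefully spell out, but the argument is identical.
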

\begin{proof}
Write
\[
\alpha^M_m = \Sj a_j y_j Je_j + \cO(\|y\|^2)
\]
as in Lemma \ref{lem loc V}. Since the Christoffel symbols of $\nabla^{TM}$ in the coordinates $y$ vanish at $m$, we have for all $j$,
\[
\bigl(\nabla^{TM}_{e_j}\alpha^M\bigr)_m = a_j (Je_j)_m.
\]
Thus, Lemma \ref{lem tr abs Lie} implies that at $m$, 
\[
\left\| \sum_{j=1}^{\dim M} c(e_j)c(\nabLC_{e_j} \alpha^M) \right\| = \left\|\Sj a_j c(e_j)_m c(Je_j)_m \right\| \leq \Sj a_j = \tr|\cL_{\alpha}^{T_mM}|.
\]
\end{proof}

\begin{lemma} \label{lem loc est}
For all $\varepsilon > 0$, there is a neighbourhood $U_m$ of $m$, and a constant  $\delta > 0$, such that, when restricted to smooth sections with compact supports inside $U_m$, we have for all $t \specialin ]0, \delta]$,
\[
D_{T\mu^t}^2 - 2T\ii \cL^{\cS}_{\mu^t} \geq  T\left( 2\|\alpha\|^2 - \frac{1}{2}\tr|\cL_{\alpha}^{T_mM}| - \varepsilon \right) + D^2  + T^2 \|\vf^{\mu^t}\|^2.
\]
\end{lemma}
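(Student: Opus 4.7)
The plan is to start from the Weitzenb\"ock-type identity \eqref{eq Bochner} with $\mu$ replaced by $\mu^t$, and manipulate it so that Lemmas \ref{lem L nabla}, \ref{lem est mu alpha}, and \ref{lem est 1} can be invoked on the various pieces. Subtracting $2T\ii \cL^{\cS}_{\mu^t}$ from both sides turns the covariant-derivative term into $2\ii T(\nabla^{\cS}_{\vf^{\mu^t}} - \cL^{\cS}_{\mu^t})$; and because $\alpha \in \kt^\alpha$ forces $\mu^t(m) = \alpha$ for every $t$, Lemma \ref{lem L nabla} applied with $\varphi = \mu^t$ delivers, on sections supported in a small enough neighborhood $U_m^{(1)}$ of $m$, the operator inequality
\[
2\ii T\bigl(\nabla^{\cS}_{\vf^{\mu^t}} - \cL^{\cS}_{\mu^t}\bigr) \geq 2T\|\alpha\|^2 + \tfrac{\ii T}{2}\sum_{j} c(e_j)\, c(\nabLC_{e_j}\alpha^M) - 2T\varepsilon_1.
\]

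The core step is to combine this with the remaining Clifford piece $-\ii T \sum_j c(e_j) c(\nabLC_{e_j}\vf^{\mu^t})$ coming from \eqref{eq Bochner}. Choose an orthonormal basis $\{f_1,\dots,f_{\dim M}\}$ of $T_mM$ adapted to the splitting $T_mM = T_mY \oplus \kk_\alpha^\perp\!\cdot m$, which is orthogonal by $K$-invariance of $g$ together with the diffeomorphism \eqref{equivariance}. For $t \in (0,\delta]$ with $\delta$ small enough, Lemma \ref{lem est mu alpha} gives, at $m$,
\[
\sum_{j=1}^{\dim M} c(e_j) c(\nabLC_{e_j}\vf^{\mu^t}) = \tilde B + O(\varepsilon_2), \qquad \tilde B := \sum_{j=1}^{\dim Y} c(f_j)\, c(\nabLC_{f_j}\alpha^M),
\]
while basis-independence of the Clifford sum for $\alpha^M$ lets me express it in the same adapted frame as $\tilde B + B_\perp$, where $B_\perp := \sum_{j>\dim Y} c(f_j) c(\nabLC_{f_j}\alpha^M)|_m$. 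The two Clifford contributions therefore collapse to $-\tfrac{\ii T}{2}\tilde B + \tfrac{\ii T}{2} B_\perp$, modulo an $O(T\varepsilon_2)$ error.

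Finally, I apply Lemma \ref{lem loc V} to $\alpha^M$ in normal coordinates associated to an adapted frame -- legitimate because $\cL^{T_mM}_\alpha$ preserves $T_mY$ (the flow of $\alpha^M$ leaves $Y$ invariant) -- so that the matrix appearing in Lemma \ref{lem tr abs Lie} is block diagonal with respect to $T_mY \oplus \kk_\alpha^\perp\!\cdot m$. The coefficients $a_j$ split accordingly, and the triangle inequality together with $\|c(f_j) c(Jf_j)\| \leq 1$ yields $\|\tilde B\| + \|B_\perp\| \leq \sum_j a_j = \tr|\cL^{T_mM}_\alpha|$. Since $\ii\tilde B$ and $\ii B_\perp$ are self-adjoint, this upgrades to the operator bound $-\tfrac{\ii T}{2}\tilde B + \tfrac{\ii T}{2} B_\perp \geq -\tfrac{T}{2}\tr|\cL^{T_mM}_\alpha|$. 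Extending the pointwise-at-$m$ estimates to a neighborhood $U_m \subset U_m^{(1)}$ by continuity, and absorbing all of the $\varepsilon_i$ into a single $T\varepsilon$ tolerance, assembles the remaining contributions $D^2$, $T^2\|\vf^{\mu^t}\|^2$, $2T\|\alpha\|^2$, and $-\tfrac{T}{2}\tr|\cL^{T_mM}_\alpha| - T\varepsilon$ into exactly the claimed inequality. The main obstacle is precisely this bookkeeping step: Lemma \ref{lem est mu alpha} produces only a partial sum over $T_mY$ while Lemma \ref{lem L nabla} contributes the full basis-independent Clifford sum, and the clean $-\tfrac{1}{2}$ coefficient in the final bound emerges only after verifying that the adapted frame of Lemma \ref{lem loc V} may be chosen orthogonal to $T_mY$, which in turn rests on $K$-invariance of $g$ and the twisted-product structure \eqref{equivariance}.
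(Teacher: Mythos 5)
Your proof is correct and follows the paper's argument step for step: start from \eqref{eq Bochner}, absorb the covariant-derivative term via Lemma \ref{lem L nabla} with $\varphi = \mu^t$, replace the $\vf^{\mu^t}$-Clifford sum by the $T_mY$-partial sum of the $\alpha^M$-Clifford sum via Lemma \ref{lem est mu alpha}, split the full $\alpha^M$-Clifford sum into a $T_mY$-block and a $\kk_\alpha^\perp$-block, and bound the resulting $-\tfrac{\ii}{2}\tilde B + \tfrac{\ii}{2}B_\perp$ below by $-\tfrac{1}{2}\tr|\cL^{T_mM}_\alpha|$. The one cosmetic variation is in the last step: you invoke Lemma \ref{lem loc V} directly on $\alpha^M$ with a frame adapted to $T_mY \oplus \kk_\alpha^\perp$ (justified because $\cL_\alpha^{T_mM}$ preserves the splitting, so the polar decomposition is block-diagonal), whereas the paper applies Lemma \ref{lem est 1} separately to $Y$ and to $K/K_\alpha$ and adds the traces -- but these are two phrasings of the same bound.
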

\begin{proof}
Let $\varepsilon > 0$ be given.
By \eqref{eq Bochner} and Lemma \ref{lem L nabla}, applied with $\varphi = \mu^t$, we can choose $U_m$ so small that on $U_m$,
\begin{multline*}
D_{T\mu^t}^2 - 2T\ii \cL^{\cS}_{\mu^t} \geq  D^2  + T^2 \|\vf^{\mu^t}\|^2 
+2T \|\alpha\|^{2} \\- \ii T \sum_{j=1}^{\dim M} c(e_j) c(\nabLC_{e_j}\vf^{\mu^t}) + \frac{\ii T}{2} \Sj c(e_{j}) c(\nabla^{TM}_{e_{j}}\alpha^{M})
 -\varepsilon T/2.
\end{multline*}
By Lemma \ref{lem est mu alpha}, there is a $\delta \specialin ]0,1]$ such that for all $t \specialin ]0, \delta]$, at $m$,
\[
-\ii \sum_{j=1}^{\dim M} c(e_j) c(\nabLC_{e_j}\vf^{\mu^t}) \geq
	-\ii \sum_{j=1}^{\dim Y} c(f_j) c(\nabLC_{f_j}\alpha^M) - \varepsilon/2,
\]
for any orthonormal basis $\{f_1, \ldots, f_{\dim Y}\}$ of $T_mY$. Extending this to an orthonormal basis $\{f_1, \ldots, f_{\dim M}\}$ of $T_mM$, such that 
\[
f_j \in \kk_{\alpha}^{\perp} \cong \kk/\kk_{\kk_{\alpha}}, 
\] 
for $\dim Y < j \leq \dim M$,
and noting that the operator $\Sj c(e_{j}) c(\nabla^{TM}_{e_{j}}\alpha^{M})$ is independent of the local orthonormal frame $\{e_1, \ldots, e_{\dim M}\}$, we find that, at $m$, 
\begin{equation}
\begin{aligned}
&- \ii  \sum_{j=1}^{\dim M} c(e_j) c(\nabLC_{e_j}\vf^{\mu^t}) + \frac{\ii }{2} \Sj c(e_{j}) c(\nabla^{TM}_{e_{j}}\alpha^{M})\\
&\geq - \frac{\ii}{2} \sum_{j=1}^{\dim Y} c(f_j) c(\nabLC_{f_j}\alpha^M) + \frac{\ii}{2} \sum_{j = \dim Y+1}^{\dim M} c(f_{j}) c(\nabla^{TM}_{f_{j}}\alpha^{M}) - \varepsilon/2.
\end{aligned}
\end{equation}
By applying Lemma \ref{lem est 1} to $Y$ and $K/K_{\alpha}$, we find that the latter expression is at least equal to
\[
-\frac{1}{2}\tr|\cL_{\alpha}^{T_mY}|- \frac{1}{2}\tr|\cL_{\alpha}^{\kk/\kk_{\alpha}}| - \varepsilon/2 = -\frac{1}{2}\tr|\cL_{\alpha}^{T_mM}|- \varepsilon/2. 
\]
This completes the proof. 
\end{proof}


\subsection{Harmonic oscillator} \label{sec harm osc}

Let us define an operator
\begin{equation} \label{eq def KT}
K_{T, t} = D^2  - T\tr|\cL_{\vf^{\mu^t}}^{T_mM}|+ T^2 \|\vf^{\mu^t}\|^2.
\end{equation}
It can be bounded below as follows.
\begin{lemma}\label{lem bound KT}
For any $\varepsilon > 0$, there is a neighbourhood $U_m$ of $m$, and a constant $C>0$, such that for all $t \specialin ]0, 1]$, 
on sections supported in $U_m$,
\[
K_{T, t} \geq -C -T\varepsilon.
\]
\end{lemma}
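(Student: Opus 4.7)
The plan is to identify $K_{T,t}$, modulo a $T$-uniformly bounded operator, with a multidimensional quantum harmonic oscillator centred at $m$, and then invoke the classical ground-state bound. First, since $\mu(m) = \alpha \in \kt^\alpha$, we have $\mu^t(m) = \mu_{\kt^\alpha}(m) + t\mu_\kh(m) = \alpha$ for every $t \specialin ]0,1]$, and as $m \in M^\alpha$ this gives $\vf^{\mu^t}_m = \alpha^M_m = 0$. Applying Lemma \ref{lem loc V} to $\vf^{\mu^t}$ at $m$, I would choose normal coordinates $y$ around $m$ and an orthonormal frame $\{e_j^t\}$ in which
\[
\vf^{\mu^t}_y = \Sj a_j^t y_j J^t e_j^t + O(\|y\|^2),
\]
with $a_j^t \geq 0$ depending continuously on $t \in [0,1]$. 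Orthogonality of $J^t$ then gives $\|\vf^{\mu^t}_y\|^2 = \sum_j (a_j^t)^2 y_j^2 + R_t(y)$ with $|R_t(y)| \leq C_R \|y\|^3$ uniformly in $t$, and Lemma \ref{lem tr abs Lie} identifies $\tr|\cL^{T_mM}_{\vf^{\mu^t}}| = \sum_j a_j^t$.

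Next I would reduce $D^2$ to the flat scalar Laplacian via the Lichnerowicz--Bochner formula, writing $D^2 = (\nabla^{\cS})^*\nabla^{\cS} + \mathcal{R}$, where $\mathcal{R}$ is a zeroth-order endomorphism of $\cS$ bounded on any relatively compact set. Trivialising $\cS$ on $U_m$ by parallel transport along radial rays from $m$ and applying Cauchy--Schwarz to absorb the connection-form contributions into a fraction of the flat Laplacian, one obtains $D^2 \geq -\Delta_y - C_0$ on sections supported in a coordinate ball $U_m = \{\|y\| \leq r\}$, with $C_0$ independent of $T$, $t$, and (for $r$ small) of $r$. Combined with the classical one-dimensional harmonic oscillator bound $-\partial_{y_j}^2 + T^2 (a_j^t)^2 y_j^2 \geq T a_j^t$, summed over $j$, this yields, on compactly supported sections in $U_m$,
\[
K_{T,t} \geq -C_0 + T^2 R_t(y).
\]

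The main obstacle is absorbing the cubic perturbation $T^2 R_t(y)$ into $-T\varepsilon$ uniformly in $t$ and $T$: the crude estimate $|T^2 R_t(y)| \leq T^2 C_R r \|y\|^2$ cannot be dominated by the harmonic oscillator in the null directions of the linearisation (where $a_j^t = 0$ provides no confinement). I would overcome this with a refined 1D factorisation: letting $\omega_j^t$ be a primitive of the relevant $j$-th diagonalised component of $\vf^{\mu^t}$, the identity $(\partial_{y_j} + T\omega_j^t)^*(\partial_{y_j} + T\omega_j^t) \geq 0$ gives $-\partial_{y_j}^2 + T^2 (\omega_j^t)^2 \geq T\partial_{y_j}\omega_j^t = T(a_j^t + O(\|y\|))$, while the non-gradient part of $\vf^{\mu^t}$ is controlled by Cauchy--Schwarz inside each eigendirection of the linearisation. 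Summing and using $\|y\| \leq r$ then gives $-\Delta_y + T^2 \|\vf^{\mu^t}\|^2 \geq T \sum_j a_j^t - T C_1 r$, uniformly in $t \in [0,1]$ by continuity of $(a_j^t)_j$ on this compact interval. Choosing $r(\varepsilon) = \varepsilon/C_1$ then yields $K_{T,t} \geq -C_0 - T\varepsilon$ as required, with $C = C_0$.
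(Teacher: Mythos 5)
Your core idea matches the paper's: the fatal $T^2\cdot O(\|y\|^3)$ error of the linearised harmonic oscillator is avoided by completing the square against the \emph{exact} components $h_j^t$ of $\vf^{\mu^t} = \sum_j h_j^t Je_j$ from Lemma \ref{lem loc V}, for which $\sum_j (h_j^t)^2 = \|\vf^{\mu^t}\|^2$ exactly and $\tr|\cL^{T_mM}_{\vf^{\mu^t}}| = \sum_j a_j(t)$. But the execution diverges, and one step is actually false as stated. The paper applies the factorisation with \emph{covariant} derivatives: the manifestly nonnegative operator $\Delta_{T,t} := \sum_j\bigl(\nabla^{\cS}_{e_j} + T h_j^t\bigr)^*\bigl(\nabla^{\cS}_{e_j} + T h_j^t\bigr)$ expands to $\Delta - T\tr|\cL^{T_mM}_{\vf^{\mu^t}}| + T^2\|\vf^{\mu^t}\|^2$ plus $T$-linear error terms that are uniformly $O(\|y\|)$ over $t\in[0,1]$, and the Bochner formula ($\Delta - D^2$ is a bounded endomorphism) closes the argument with no coefficient loss. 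You instead trivialise $\cS$ and compare $D^2$ to the flat Laplacian $-\Delta_y$; absorbing the connection one-form by Cauchy--Schwarz costs a factor, giving only $D^2 \geq (1-\epsilon)(-\Delta_y) - C_\epsilon$, not $D^2 \geq -\Delta_y - C_0$ as you assert. Propagating the factor $1-\epsilon$ through the oscillator inequality then leaves an uncontrolled term of size $T(1-\sqrt{1-\epsilon})\sum_j a_j(t)$, so the conclusion $K_{T,t} \geq -C_0 - T\varepsilon$ with $C = C_0$ does not follow from what you have written. The route is repairable by choosing $\epsilon$ as a function of $\varepsilon$, but that is a missing step, and the advertised $C=C_0$ is then wrong.

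Two further points on the refined factorisation. You should take $\omega_j^t = h_j^t$ itself, not a primitive of it: an antiderivative would give $\sum_j (\omega_j^t)^2 \neq \|\vf^{\mu^t}\|^2$, so the $T^2$ term would not reproduce $T^2\|\vf^{\mu^t}\|^2$. And once $\vf^{\mu^t}$ has been written as $\sum_j h_j^t Je_j$ in the $J$-rotated orthonormal frame, there is no residual ``non-gradient part'': the squares $\sum_j(h_j^t)^2$ already reconstruct $\|\vf^{\mu^t}\|^2$ exactly, so the additional Cauchy--Schwarz ``inside each eigendirection'' has no role and it is unclear what it is meant to estimate.
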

\begin{proof}
By Lemma \ref{lem loc V}, we can write
\[
\vf^{\mu^t} = \sum_{j=1}^{\dim M} h_{j}(t, \relbar) Je_{j},
\]
near $m$,
where, for all $j$,
\[
h_{j}(t, y) = a_{j}(t)y_j +  r_j(t, y),
\]
for smooth functions $a_j$ and $r_j$, such that $a_j$ is nonnegative, and
\[
r_j(t, y) = \cO(\|y\|^2).
\]
We will write $h_j^t := h_j(t, \relbar)$ and $r_j^t := r_j(t, \relbar)$. Then
\[
\| \vf^{\mu^t} \|^{2} = \sum_{j=1}^{\dim M} (h_j^t)^{2}, \quad \text{and} \quad \tr|\cL_{\vf^{\mu^t}}^{T_mM}| = \Sj a_j(t).
\]

Analogously to (2.17) in \cite{Zhang98}, consider the nonnegative operator
\begin{equation} \label{eq def DelT}
\Delta_{T, t} := \sum_{j=1}^{\dim {M}} \left(  \bigl(\nabla^{\cS}_{e_j}\bigr)^* + T \cdot h_j^t \right)\left( \nabla^{\cS}_{e_j} + T\cdot h_j^t \right).
\end{equation}
By a straightforward computation, we have
\begin{equation} \label{eq DelT}
\Delta_{T, t} = \Delta -T \tr|\cL_{\vf^{\mu^t}}^{T_mM}| + T^2 \|\vf^{\mu^t}\|^2 + 
	T\sum_{j=1}^{\dim M}  \left(  \nabla^{\cS}_{e_j} +  \bigl(\nabla^{\cS}_{e_j}\bigr)^*   \right) h_j^t -T e_j(r_j^t).
\end{equation}
Here $\Delta := \Sj   \bigl(\nabla^{\cS}_{e_j}\bigr)^*  \nabla^{\cS}_{e_j}$ is the Bochner Laplacian. By the Bochner formula (see e.g.\ Theorem D.12 in \cite{Lawson89}), the difference $\Delta - D^2$ is a vector bundle endomorphism of $\cS$. So is the operator $ \nabla^{\cS}_{e_j} + \bigl(\nabla^{\cS}_{e_j}\bigr)^*$. 

Let $\varepsilon > 0$ be given. Note that $h_j(t, m) = 0$ for all $t$ and $j$, and that $h_j(t, y)$ depends smoothly on $t$, and extends smoothly to $t \in \R$.
Therefore, we can choose $U_m$ so small that for all $t$ in the compact interval $[0,1]$, we have on $U_m$,
\[
\left\| \Sj  \left( \nabla^{\cS}_{e_j} + \bigl(\nabla^{\cS}_{e_j}\bigr)^*  \right) h_j^t  \right\| \leq \varepsilon/2. 
\]
Similarly, $e_j(r_j^t)(y) = \cO(\|y\|)$, and this function is smooth in $t$. This allows us to choose $U_m$ small enough so that for all $t \in [0,1]$, we have
\[
|e_j(r_j^t) | \leq \varepsilon/2.
\]
With $U_m$ chosen in this way, one has the desired lower bound for $K_{T, t}$.
\end{proof}

In addition, we have the following estimate for the middle term $\tr|\cL_{\vf^{\mu^t}}^{T_mM}|$ in (\ref{eq def KT}). 

\begin{lemma} \label{lem tr La Vmu}
For all $\varepsilon > 0$, there is a $\delta > 0$, such that for all $t \specialin ]0, \delta]$, we have
\[
 \tr|\cL_{\vf^{\mu^t}}^{T_mM}| \geq \tr|\cL_{\alpha}^{T_mY}| - \varepsilon
\] 
\end{lemma}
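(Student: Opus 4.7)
The plan is to perform a local computation at $m$, exploiting that both $\vf^{\mu^t}$ and $\alpha^M$ vanish at $m$. I would first derive an explicit formula for $\cL_{\vf^{\mu^t}}^{T_mM}$, then observe that $T_mY$ is $\cL_{\vf^{\mu^t}}^{T_mM}$-invariant with a tractable restriction, and finally apply a standard trace-norm inequality and a continuity argument.

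To begin, since $\mu(m) = \alpha \in \kt^{\alpha}$ forces $\mu_{\kh}(m) = 0$, we have $\mu^t(m) = \mu_{\kt^{\alpha}}(m) + t\mu_{\kh}(m) = \alpha$ for all $t \specialin ]0,1]$, and hence $\vf^{\mu^t}_m = \alpha^M_m = 0$. By torsion-freeness of $\nabLC$, $\cL_{\vf}^{T_mM}(w) = -\nabLC_w \vf|_m$ whenever $\vf_m = 0$. Differentiating the identity $\vf^{\mu^t}_{m'} = \mu^t(m')^M_{m'}$ along a curve with tangent $w$ at $m$, and using linearity of $X \mapsto X^M$, then yields
\[
\cL_{\vf^{\mu^t}}^{T_mM}(w) = -(d_m \mu^t(w))^M_m + \cL_{\alpha}^{T_mM}(w).
\]

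Next, for $w \in T_mY$, since $\mu^t|_Y = \mu_{\kt^{\alpha}} + t\mu_{\kh}$ takes values in $\kk_{\alpha}$, both $(d_m\mu^t(w))^M_m \in T_m(K_{\alpha}\cdot m) \subset T_mY$ and $\nabLC_w \alpha^M|_m \in T_mY$; in particular $T_mY$ is invariant. Moreover, because $m \in M^{\alpha}$ is fixed by the closure $T^{\alpha}$ of $\{\exp(s\alpha)\}_{s \in \R}$, every $X \in \kt^{\alpha}$ satisfies $X^M_m = 0$, so the $\mu_{\kt^{\alpha}}$-piece of $d_m\mu^t(w)$ contributes nothing at $m$, and the restriction $A_Y := \cL_{\vf^{\mu^t}}^{T_mM}|_{T_mY}$ takes the form
\[
A_Y = \cL_{\alpha}^{T_mY} + tB, \qquad B(w) := -(d_m\mu_{\kh}(w))^M_m,
\]
with $B \in \End(T_mY)$ independent of $t$.

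Finally, I would invoke the general trace-norm inequality $\tr|A_Y| \leq \tr|\cL_{\vf^{\mu^t}}^{T_mM}|$, which holds whenever $A_Y$ is the restriction of an operator to an invariant subspace of an inner-product space. This follows from the variational formula $\tr|C| = \sup_{\|U\| \leq 1} \tr(CU)$: extending any $U \in \End(T_mY)$ by zero on $(T_mY)^{\perp}$ produces $\tilde U \in \End(T_mM)$ of the same operator norm with $\tr(A_Y U) = \tr(\cL_{\vf^{\mu^t}}^{T_mM} \tilde U)$. Combined with continuity of $A \mapsto \tr|A|$ on $\End(T_mY)$, choosing $\delta > 0$ so small that $\tr|\cL_{\alpha}^{T_mY} + tB| \geq \tr|\cL_{\alpha}^{T_mY}| - \varepsilon$ for $t \specialin ]0, \delta]$ gives the claim. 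The main technical subtlety is the derivation of the identity for $\cL_{\vf^{\mu^t}}^{T_mM}(w)$ at the zero $m$ and the verification that $T_mY$ is invariant; the trace-norm bound and continuity step are routine once this is in place.
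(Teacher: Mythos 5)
Your argument is correct, and it takes a genuinely different route from the paper's. Both proofs rely on the same preliminary facts, namely that $\vf^{\mu^t}_m = 0$ for all $t$, the slice decomposition $T_mM \cong T_mY \oplus \kk_\alpha^{\perp}$, the invariance of $T_mY$ under the linearisation, and the vanishing $X^M_m = 0$ for $X \in \kt^{\alpha}$. But the two proofs diverge in how they use them. The paper first establishes the exact equality $\tr|\cL^{T_mM}_{\vf^{\mu^t}}| = \tr|\cL^{T_mY}_{\vf^{\mu^t}}|$, by observing that $K$-invariance of $\vf^{\mu^t}$ forces $\cL_{\vf^{\mu^t}}X^M = 0$ for all $X\in\kk$, so the linearisation annihilates the orbit directions complementary to $T_mY$; you instead prove only the one-sided bound $\tr|A_Y| \leq \tr|\cL^{T_mM}_{\vf^{\mu^t}}|$ via the variational characterisation $\tr|C| = \sup_{\|U\|\leq 1}\tr(CU)$, using nothing beyond invariance of $T_mY$. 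For the $t$-dependence, the paper splits $\vf^{\mu^t}|_Y = \vf^{\mu_{\kt^\alpha}} + t\vf^{\mu_{\kh}}$, appeals to continuity of $t\mapsto\tr|\cL^{T_mY}_{\vf^{\mu^t}}|$ to pass to $t=0$, and then shows in a separate step that $\vf^{\mu_{\kt^\alpha}}_y - \alpha^Y_y = \cO(\|y\|^2)$ so that $\cL^{T_mY}_{\vf^{\mu_{\kt^\alpha}}} = \cL^{T_mY}_{\alpha}$; your explicit formula $A_Y = \cL^{T_mY}_{\alpha} + tB$ packages all of this at once, making the affine dependence on $t$ transparent and bypassing the second-order comparison entirely. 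Your version is somewhat cleaner at the cost of having to justify the trace-norm monotonicity, which the paper avoids by proving an equality; either route is legitimate, and the gain and loss roughly cancel.
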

\begin{proof}
For all $X \in \kk$ and for all $t$, $K$-invariance of the vector field $\vf^{\mu^t}$ implies that
\[
\cL_{\vf^{\mu^t}} X^M = -\cL_X \vf^{\mu^t} = 0. 
\]
Thus, $ \tr|\cL_{\vf^{\mu^t}}^{T_mM}| =  \tr|\cL_{\vf^{\mu^t}}^{T_mY}|$ and it is enough to prove the inequality on the slice $Y$. 

Let $\varepsilon > 0$ be given.
Recall that
\[
{\mu^t}|_Y = \mu_{\kt^{\alpha}} + t\mu_{\kh}.
\]
Accordingly, 
\[
\vf^{\mu^t}|_Y = \vf^{\mu_{\kt^{\alpha}}}  + t\vf^{\mu_{\kh}}.
\]
Since $ \tr|\cL_{\vf^{\mu^t}}^{T_mY}|$ depends continuously on $t$, there is a $\delta \specialin ]0,1]$ such that for all $t \specialin ]0, \delta]$,
\[
 \tr|\cL_{\vf^{\mu^t}}^{T_mY}| \geq  \tr|\cL_{\vf^{\mu_{\kt^{\alpha}}}}^{T_mY}| - \varepsilon. 
\]

It remains to compare $\tr|\cL_{\vf^{\mu_{\kt^{\alpha}}}}^{T_mY}|$ with $\tr|\cL_{\alpha}^{T_mY}|$. Let us write
\[
\mu_{\kt^{\alpha}}(y) = \alpha + \sum_{j=1}^{\dim \kt^{\alpha}} y_j X_j + \cO(\|y\|^2),
\]
for  coordinates $y$ on $Y$ near $m$, and $X_1, \ldots, X_{\dim Y} \in \kt^{\alpha}$. 
Since $(X_j^Y)_m = 0$ for such $X_j$, one has that
\[
(X_j^Y)_y = \sum_{k, l = 1}^{\dim Y} c_{j}^{kl}y_k e_l + \cO(\|y\|^2),
\]
for certain numbers $c_j^{kl}$, and for a local orthonormal frame $\{e_1, \ldots, e_{\dim Y}\}$ of $TY$. It follows that 
\[
\vf^{\mu_{\kt^{\alpha}}}_y - \alpha^Y_y = \cO(\|y\|^2).
\]
Therefore, Lemma \ref{lem tr abs Lie} implies that
\[
\tr|\cL_{\vf^{\mu_{\kt^{\alpha}}}}^{T_mY}| = \tr|\cL_{\alpha}^{T_mY}|.
\]
Hence the claim follows. 
\end{proof}

In \cite{Paradan14}, a function $d$ on $Z_{\mu}$ plays an important role. We will now see this function appear in our estimates as well. It is defined by
\begin{equation} \label{def func d}
d(m') := \|\mu(m')\|^2 +  \frac{1}{4} \tr |\cL^{T_{m'}M}_{\mu(m')}| - \frac{1}{2}\tr |\ad(\mu(m'))|, 
\end{equation}
for $m' \in Z_{\mu}$. It is locally constant by Lemma 4.16 in \cite{Paradan14}. Furthermore, we note for later use that there is a constant $C_K > 0$, 
independent of $M$ or $\mu$, such that for all $m' \in Z_{\mu}$,
\begin{equation} \label{eq est d}
d(m') \geq \|\mu(m')\|^2 - C_K \|\mu(m')\|.
\end{equation}
The precise value of $C_K$ is not important for our arguments, but to be specific we can take $C_K := 2 \|\rhoK\|$ (see e.g.\ the comment above Lemma 3.10 in \cite{Paradan14}).

\begin{proposition} \label{prop loc est d}
For all $\varepsilon > 0$, there is a neighbourhood $U_m$ of $m$, and  constants  $\delta > 0$ and $C>0$, such that, when restricted to smooth sections with compact supports inside $U_m$, we have for all $t \specialin ]0, \delta]$,
\[
D_{T\mu^t}^2 - 2T\ii \cL^{\cS}_{\mu^t} \geq  T(2d(m) - \varepsilon) - C. 
\]
\end{proposition}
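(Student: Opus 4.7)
The plan is to combine the three preparatory estimates (Lemma \ref{lem loc est}, Lemma \ref{lem bound KT}, and Lemma \ref{lem tr La Vmu}) in a simple algebraic way, once the correct bookkeeping between traces on $T_mM$, on $T_mY$, and on $\kk/\kk_\alpha$ is carried out. The key identity I would use is the orthogonal decomposition
\[
T_mM \;\cong\; T_mY \;\oplus\; \kk_\alpha^\perp,
\]
together with the fact that on the second summand the Lie derivative $\cL_\alpha$ acts as $\ad(\alpha)$ (since that summand is isomorphic to $\kk/\kk_\alpha$ via the orbit map). Because $\ad(\alpha)$ vanishes on $\kk_\alpha$, this yields
\[
\tr|\cL_\alpha^{T_mM}| \;=\; \tr|\cL_\alpha^{T_mY}| \;+\; \tr|\ad(\alpha)|.
\]

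Let $\varepsilon>0$ be given. First, apply Lemma \ref{lem loc est} with $\varepsilon/3$ to obtain a neighbourhood $U_m^{(1)}$ and $\delta_1>0$ such that on sections supported in $U_m^{(1)}$, for every $t\specialin ]0,\delta_1]$,
\[
D_{T\mu^t}^2 - 2T\ii \cL^{\cS}_{\mu^t} \;\geq\; T\Bigl(2\|\alpha\|^{2} - \tfrac{1}{2}\tr|\cL_\alpha^{T_mM}| - \tfrac{\varepsilon}{3}\Bigr) + D^2 + T^{2}\|\vf^{\mu^t}\|^{2}.
\]
Now rewrite the Bochner-type tail using the definition \eqref{eq def KT}:
\[
D^2 + T^{2}\|\vf^{\mu^t}\|^{2} \;=\; K_{T,t} + T\,\tr|\cL^{T_mM}_{\vf^{\mu^t}}|.
\]
Apply Lemma \ref{lem bound KT} (with $\varepsilon/3$) to get a neighbourhood $U_m^{(2)}$ and a constant $C>0$ with $K_{T,t}\geq -C - T\varepsilon/3$, and Lemma \ref{lem tr La Vmu} (with $\varepsilon/3$) to get $\delta_2>0$ such that
\[
\tr|\cL^{T_mM}_{\vf^{\mu^t}}| \;\geq\; \tr|\cL_\alpha^{T_mY}| - \tfrac{\varepsilon}{3}
\]
for $t\specialin ]0,\delta_2]$. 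Substituting the decomposition identity then gives
\[
T\,\tr|\cL^{T_mM}_{\vf^{\mu^t}}| \;\geq\; T\bigl(\tr|\cL_\alpha^{T_mM}| - \tr|\ad(\alpha)|\bigr) - T\tfrac{\varepsilon}{3}.
\]

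Take $U_m := U_m^{(1)}\cap U_m^{(2)}$ and $\delta := \min(\delta_1,\delta_2)$. Combining the three displays above on sections supported in $U_m$, and noting that the $-\tfrac12\tr|\cL_\alpha^{T_mM}|$ and $+\tr|\cL_\alpha^{T_mM}|$ terms combine to $+\tfrac12\tr|\cL_\alpha^{T_mM}|$, one obtains
\[
D_{T\mu^t}^2 - 2T\ii \cL^{\cS}_{\mu^t} \;\geq\; T\Bigl(2\|\alpha\|^{2} + \tfrac{1}{2}\tr|\cL_\alpha^{T_mM}| - \tr|\ad(\alpha)|\Bigr) - T\varepsilon - C.
\]
Recalling \eqref{def func d} and $\mu(m)=\alpha$, the bracketed quantity equals $2d(m)$, which proves the claim. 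The only slightly subtle step is the trace decomposition identity above; everything else is bookkeeping of the three $\varepsilon/3$ terms and passing to the common neighbourhood, so I do not anticipate any serious obstacle.
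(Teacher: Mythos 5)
Your proof is correct and follows essentially the same route as the paper: combine Lemma \ref{lem bound KT} with the definition of $K_{T,t}$, invoke Lemma \ref{lem tr La Vmu} and Lemma \ref{lem loc est}, and finish with the trace identity $\tr|\cL_\alpha^{T_mM}| = \tr|\cL_\alpha^{T_mY}| + \tr|\ad(\alpha)|$, which is exactly the content of \eqref{eq est tr Lie} in the paper's proof. The only difference is trivial bookkeeping of the $\varepsilon$-contributions ($\varepsilon/3$ throughout versus the paper's absorption into $\varepsilon/2$ and the constant $C$).
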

\begin{proof}
By combining Lemmas  \ref{lem bound KT} and \ref{lem tr La Vmu}, we find that for $U_m$ small enough, and for a $\delta \specialin ]0, 1]$ and a $C>0$, we have for all $t \specialin ]0, \delta]$, on sections supported in $U_m$,
\[
D^2  + T^2 \|\vf^{\mu^t}\|^2 \geq T\cdot (\tr|\cL_{\alpha}^{T_mY}| - \varepsilon/2) - C.
\]
So by Lemma \ref{lem loc est}, we can choose $U_m$ and $\delta \specialin ]0,1]$ so small that for all $t \specialin ]0, \delta]$, on sections supported in $U_m$,
\[
D_{T\mu^t}^2 - 2T\ii \cL^{\cS}_{\mu^t} \geq  T\left( 2\|\alpha\|^2 + \tr |\cL^{T_mY}_{\alpha}| - \frac{1}{2}\tr|\cL^{T_mM}_{\alpha}|- \varepsilon \right) -C.
\]
Remembering the fact that $T_mM = T_mY \oplus \kk/\kk_{\alpha}$ and $\ad(\alpha) = 0$ on $\kk_{\alpha}$, one sees that
\begin{equation} \label{eq est tr Lie}
\tr |\cL^{T_mY}_{\alpha}| - \frac{1}{2}\tr|\cL^{T_mM}_{\alpha}| = \frac{1}{2} \tr |\cL^{T_mM}_{\alpha}| - \tr |\ad(\alpha)|.
\end{equation}
\end{proof}


\subsection{Proof of Theorem \ref{thm vanishing}} \label{sec pf vanishing d}

Fix $\pi \in \hat K$. Let $B_{\pi} > 0$ be such that
the infinitesimal representation of $\kk$ associated to $\pi$ satisfies
\[
\|\pi(X)\| \leq B_{\pi} \|X\|,
\]
 for all $X \in \kk$.
Set
\[
C_{\pi} := B_{\pi} + 2 \|\rhoK\|.
\]
We will prove Theorem \ref{thm vanishing} by showing that this value of $C_{\pi}$ has the desired property.

Let $U$ and $\mu$ be as in Theorem \ref{thm vanishing}. In particular, suppose that $\|\mu(m)\| > C_{\pi}$ for all $m \in U$. Let $F$ be a connected component of $Z_{\mu}$, and let $\alpha$ be the value of $\mu$ on $F$. Then $\|\alpha\| > C_{\pi}$. Choose $\varepsilon > 0$ such that
\begin{equation} \label{eq def eta}
\eta := 2\|\alpha\|\bigl(\|\alpha\| - C_{\pi}\bigr) - \varepsilon ( 2B_{\pi} + 1 ) > 0.
\end{equation}
By Lemma \ref{lem decomp Z mu} and properness of $\mu$, the set $F$ is compact.
Let $W$ be a $K$-invariant, relatively compact neighbourhood of $F$ on which the deformed moment map $\mu^t$ of Subsection \ref{sec deform mu} is defined, and such that $Z_{\mu^t}$ is independent of $t \specialin ]0, 1]$ (see Lemma \ref{lem Z mu t}).

Since $F$ is compact, 
Proposition \ref{prop loc est d} allows us to find an open cover $\{V_1, \ldots, V_n\}$ of $F$ such that for all $j$, there are $\delta_j \specialin ]0,1]$, $C_j > 0$ and $m_j \in F \cap V_j$, such that for all $t \specialin ]0, \delta_j]$, we have on smooth sections of $\cS$ supported inside $V_j$,
\[
D_{T\mu^t}^2   \geq  T\bigl(2d(m_j) + 2\ii \cL^{\cS}_{\mu^t}- \varepsilon \bigr) - C_j. 
\]
The function $d$ is constant on $F$, so $d(m_j) = d(m)$ for any fixed $m \in F$. Set $\delta := \min_j \delta_j$ and $C := \max_j C_j$. Then for all $j$,  we have the estimate
\begin{equation} \label{eq est DTmut}
D_{T\mu^t}^2   \geq  T\bigl(2d(m) + 2\ii \cL^{\cS}_{\mu^t}- \varepsilon \bigr) - C,
\end{equation}
for $t \specialin ]0, \delta]$, on sections supported in $V_j$.

By shrinking $W$ if necessary, we may assume that $\overline{W} \subset \bigcup_{j=1}^n V_j$. As on pp.\ 115--117 of \cite{Bismut91}, and on p.\ 243 of \cite{Zhang98}, choose functions $\varphi_j$ supported in $V_j$, for each $j$, such that $\sum_{j=1}^n \varphi_j^2 = 1$ on $W$, and which allow us to conclude that \eqref{eq est DTmut} holds on the space of all smooth sections of $\cS$ supported in $W$.

Next, let $\Gamma^{\infty}_c(\cS|_W)_{\pi}$ be the $\pi$-isotypical component of $\Gamma^{\infty}_c(\cS|_W)$. On this subspace, Lie derivatives are bounded.
\begin{lemma} \label{lem bound L mu t}
For all $\varepsilon > 0$, the $K$-invariant neighbourhood $W$ of $F$ can be chosen so that the operator $\cL^{\cS}_{\mu^t}$ is bounded on $\Gamma^{\infty}_c(\cS|_W)_{\pi}$ with respect to the $L^2$-norm, with norm at most
\[
\left\|\cL^{\cS}_{\mu^t}|_{\Gamma^{\infty}_c(\cS|_W)_{\pi}}  \right\| \leq B_{\pi}(\|\alpha\| + \varepsilon).
\]
In addition, the neighbourhood $W$ with this property can be chosen independently of $\pi$.
\end{lemma}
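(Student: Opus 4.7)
The plan is to use the pointwise nature of \eqref{eq Lie mu}. Fix an orthonormal basis $\{X_1, \ldots, X_d\}$ of $\kk$ for the $\Ad(K)$-invariant inner product, and expand $\mu^t(m') = \sum_i f_i^t(m') X_i$, so that $\sum_i |f_i^t(m')|^2 = \|\mu^t(m')\|^2$. Linearity of the pointwise Lie derivative in its Lie algebra variable gives
\[
\cL^{\cS}_{\mu^t} s \;=\; \sum_{i=1}^d f_i^t \cdot \cL^{\cS}_{X_i} s,
\]
in which each $\cL^{\cS}_{X_i}$ is a genuine, constant-coefficient Lie derivative that preserves the $\pi$-isotype.

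For $s \specialin \Gamma^{\infty}_c(\cS|_W)_\pi$, pointwise Cauchy--Schwarz in $\cS_{m'}$ yields
\[
\bigl\|(\cL^{\cS}_{\mu^t} s)(m')\bigr\|^2 \;\leq\; \|\mu^t(m')\|^2 \sum_{i=1}^d \bigl\|(\cL^{\cS}_{X_i} s)(m')\bigr\|^2.
\]
Integrating over $W$ and using that each $\cL^{\cS}_{X_i}$ is skew-adjoint on $L^2(\cS|_W)$ (the $K$-action being unitary) gives
\[
\|\cL^{\cS}_{\mu^t} s\|_{L^2}^2 \;\leq\; \sup_{W} \|\mu^t\|^2 \cdot \Bigl\langle -\sum_{i=1}^d (\cL^{\cS}_{X_i})^2 s,\; s\Bigr\rangle_{L^2}.
\]
On the $\pi$-isotype, $-\sum_i (\cL^{\cS}_{X_i})^2$ acts as the Casimir scalar $c_\pi \geq 0$. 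A Cauchy--Schwarz inside $V_\pi$ shows $\|d\pi(X)v\|^2 \leq c_\pi \|X\|^2\|v\|^2$ for every $X \specialin \kk$, so one may (and does) take $B_\pi \geq \sqrt{c_\pi}$ as the constant satisfying $\|\pi(X)\| \leq B_\pi \|X\|$. With this choice,
\[
\|\cL^{\cS}_{\mu^t} s\|_{L^2} \;\leq\; B_\pi \sup_W \|\mu^t\| \cdot \|s\|_{L^2}.
\]

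It remains to arrange $\sup_W \|\mu^t\| \leq \|\alpha\| + \varepsilon$ uniformly in $t \specialin [0,1]$. For any $m'' \specialin F \cap Y$, one has $\mu(m'') \specialin K\cdot\alpha \cap \kk_\alpha$, so $\|\mu(m'')\| = \|\alpha\|$; the orthogonal decomposition $\mu(m'') = \mu_{\kt^\alpha}(m'') + \mu_{\kh}(m'')$ in $\kk_\alpha$ gives
\[
\|\mu^t(m'')\|^2 \;=\; \|\mu_{\kt^\alpha}(m'')\|^2 + t^2 \|\mu_{\kh}(m'')\|^2 \;\leq\; \|\mu(m'')\|^2 \;=\; \|\alpha\|^2.
\]
$K$-equivariance extends this bound from $F \cap Y$ to all of $F$, so $\|\mu^t\| \leq \|\alpha\|$ on $F$ for every $t$. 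Since $(m', t) \mapsto \|\mu^t(m')\|$ is continuous on the relatively compact $\overline{W} \times [0,1]$ and $F$ is compact, shrinking $W$ to a small enough neighbourhood of $F$ secures the desired bound. This shrinking depends only on $\varepsilon$ and the modulus of continuity of $(m', t) \mapsto \mu^t(m')$, not on $\pi$, which gives the final assertion of the lemma. The main subtlety is obtaining a dimension-free estimate for the pointwise-varying operator $\cL^{\cS}_{\mu^t}$: a naive triangle inequality across the basis $\{X_i\}$ would produce a spurious factor of $\dim \kk$, and the Casimir identity circumvents this.
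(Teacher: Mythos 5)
Your proof is correct, and it takes a genuinely different route from the paper's. The paper works through the unitary slice isomorphism $\Phi\colon \bigl(L^2(K)\otimes L^2(\cS|_Y)\bigr)^{K_\alpha}\to L^2(\cS|_W)$, intertwining $\cL^{\cS|_W}_\varphi$ with $1\otimes\cL^{\cS|_Y}_{\varphi|_Y}$, and then splits $\cL^{\cS|_W}_{\mu^t}$ additively into the constant-coefficient piece $\cL^{\cS|_W}_{\tilde\alpha}$ (identified with $-\cL_\alpha\otimes 1$ on $L^2(K)_\pi\otimes L^2(\cS|_Y)$, hence of norm at most $B_\pi\|\alpha\|$) and the correction $\cL^{\cS|_W}_{\mu^t-\tilde\alpha}$, which is estimated term-by-term over a basis of $\kk_\alpha$ and made smaller than $B_\pi\varepsilon$ by shrinking $Y$. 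You instead keep everything on $W$ and multiply rather than add: a pointwise Cauchy--Schwarz in $\cS_{m'}$ extracts $\|\mu^t(m')\|^2$, and the remaining sum $\sum_i\|\cL^{\cS}_{X_i}s\|_{L^2}^2$ is computed exactly via skew-adjointness and the Casimir eigenvalue $c_\pi$ on the $\pi$-isotype. Your proof is more elementary in that it avoids the slice isomorphism altogether and works over a basis of all of $\kk$ rather than of $\kk_\alpha$, at the modest cost that you must normalise $B_\pi\geq\sqrt{c_\pi}$ (which, as you correctly observe, is a valid choice satisfying $\|d\pi(X)\|\leq B_\pi\|X\|$, but is not necessarily the sharp operator-norm constant one might otherwise pick). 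Since the paper leaves $B_\pi$ as any constant with that property, this is consistent with the rest of Section 4. The geometric input to your argument is the same as the paper's — that $\|\mu^t\|=\|\alpha\|$ on $F$ (because $\mu(F)\subset K\cdot\alpha$, and the orthogonal decomposition $\mu^t=\mu_{\kt^\alpha}+t\mu_\kh$ only shrinks the $\kh$-component) and that $W$ can be shrunk uniformly in $t$ — and your observation that the shrinking is $\pi$-independent correctly delivers the last clause of the lemma. One small caveat: your aside about the "spurious factor of $\dim\kk$" overstates the contrast slightly, since the paper's triangle inequality is only applied to the small correction $\mu^t-\tilde\alpha$ (where the dimension factor is harmless), not to the full map; the real structural difference is multiplicative-via-Casimir versus additive-via-slice.
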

\begin{proof}
Consider the unitary isomorphism
\[
\Phi: \bigl( L^2(K) \otimes L^2(\cS|_Y) \bigr)^{K_{\alpha}} \xrightarrow{\cong} L^2(\cS|_W),
\]
defined by
\[
\bigl(\Phi(\psi \otimes s) \bigr)(k\cdot y) := \psi(k) k\cdot (s(y)),
\]
fro $\psi \in L^2(K)$, $s \in L^2(\cS|_Y)$, $k \in K$ and $y \in Y$.
It is $K$-equivariant with respect to the left regular representation of $K$ in $L^2(K)$. So the inverse image of $\Gamma^{\infty}_c(\cS|_W)_{\pi}$ lies inside $\bigl( L^2(K)_{\pi} \otimes L^2(\cS|_Y) \bigr)^{K_{\alpha}}$. (Where $L^2(K)_{\pi}$ is the sum of $\dim \pi$ copies of $\pi$.)

Also, for any $K$-equivariant map $\varphi: W \to \kk$, one can check that, on smooth sections,
\begin{equation} \label{eq Lie Phi}
\cL^{\cS|_W}_{\varphi} \circ \Phi = \Phi \circ \bigl(1 \otimes \cL^{\cS|_Y}_{\varphi|_Y} \bigr),
\end{equation}
where the Lie derivative operators are defined as in \eqref{eq Lie mu}.

The constant map $Y \to \kk$ with value $\alpha$ is $K_{\alpha}$-equivariant, and hence extends to a $K$-equivariant map $\tilde \alpha: W \to \kk$. By \eqref{eq Lie Phi}, we have, on smooth sections,
\[
\cL^{\cS|_W}_{\tilde \alpha} \circ \Phi = \Phi \circ \bigl(1 \otimes \cL^{\cS|_Y}_{\alpha} \bigr).
\]
And since $\alpha \in \kk_{\alpha}$, we have on the smooth part of $\bigl( L^2(K) \otimes L^2(\cS|_Y) \bigr)^{K_{\alpha}}$,
\[
1 \otimes \cL^{\cS|_Y}_{\alpha} = -  \cL_{\alpha} \otimes 1.
\]
Therefore, the operator $\cL^{\cS|_W}_{\tilde \alpha}$ is bounded on $\Gamma^{\infty}_c(\cS|_W)_{\pi}$, with norm at most $B_{\pi} \|\alpha\|$.

Furthermore,
\[
\cL^{\cS|_W}_{\mu^t - \tilde \alpha} \circ \Phi = \Phi \circ \bigl(1 \otimes \cL^{\cS|_Y}_{\mu^t|_Y - \alpha} \bigr).
\]
Let $\{X_1, \ldots, X_{\dim \kk_{\alpha}}\}$ be an orthonormal basis of $\kk_{\alpha}$. Write
\[
\mu^t_j := \mu^t_{X_j}|_{Y} \quad \text{and} \quad \alpha_j := (\alpha, X_j).
\]
Then, since $\mu^t(Y) \subset \kk_{\alpha}$,
\[
\cL^{\cS|_Y}_{\mu^t|_Y - \alpha} = \sum_{j=1}^{\dim \kk_{\alpha}} (\mu^t_j - \alpha_j) \cL^{\cS|_Y}_{X_j}.
\]
As before, we have $1 \otimes \cL^{\cS|_Y}_{X_j} = - \cL_{X_j} \otimes 1$ on the smooth part of $\bigl( L^2(K) \otimes L^2(\cS|_Y) \bigr)^{K_{\alpha}}$. So the difference $\cL^{\cS|_W}_{\mu^t} - \cL^{\cS|_W}_{\tilde \alpha}$ is bounded on $\Gamma^{\infty}_c(\cS|_W)_{\pi}$, with norm at most
\[
\left\| \left. \left(\cL^{\cS|_W}_{\mu^t} - \cL^{\cS|_W}_{\tilde \alpha} \right)\right|_{\Gamma^{\infty}_c(\cS|_W)_{\pi}} \right\| \leq B_{\pi} \sum_{j=1}^{\dim \kk_{\alpha}} \|\mu^t_j - \alpha_j \|_{\infty},
\]
where $\|\cdot\|_{\infty}$ denotes the supremum norm. 

Let $\varepsilon > 0$. Because $\mu^t(Y \cap F) = \{\alpha\}$, we can choose the set $Y$, and hence $W$, so small that 
\[
\sum_{j=1}^{\dim \kk_{\alpha}} \|\mu^t_j|_Y - \alpha_j \|_{\infty} \leq \varepsilon.
\]
This neighbourhood $W$ has the desired property.
\end{proof}

As noted below \eqref{eq est d}, 
we have
\[
d(m) \geq \|\alpha\|^2 - 2 \|\alpha\| \|\rhoK\|.
\]
Combining this with Lemma \ref{lem bound L mu t} and the fact that \eqref{eq est DTmut} holds on $W$,  we conclude that, on sections in $\Gamma^{\infty}_c(\cS)_{\pi}$ supported in $W$,
\[
\begin{split}
D_{T\mu^t}^2  &\geq  T\bigl(2\|\alpha\|^2 - 4 \|\rhoK\| \|\alpha\| - 2 B_{\pi} (\|\alpha\| + \varepsilon) - \varepsilon
\bigr) - C \\
	&> T  \eta - C, 
\end{split}
\]
with $\eta > 0$ as in \eqref{eq def eta}. Therefore,  if $T > C/ \eta$, then $D_{T\mu^{t}}^2 > 0$ on the space of such sections. Because of \eqref{eq index mu t}, this implies that
\[
 \bigl[ \indK(\cS|_W, \mu|_W) : \pi \bigr] = \bigl[ \indK(\cS|_W, T\mu^t) : \pi \bigr] = 0. 
\]
By summing over all connected components of $Z_{\mu}$, we find that
\[
\bigl[ \indK(\cS, \mu) : \pi \bigr] = 0, 
\]
so Theorem \ref{thm vanishing} is true.

\begin{remark}
With Definition \ref{def index proper}, we immediately see that Theorem \ref{thm vanishing} generalises from taming moment maps to proper moment maps.
\end{remark}

A final comment, which we will use in the proof of Theorem \ref{thm [Q,R]=0}, is that
 if $\pi$ is the trivial representation, the above reasoning leads to a more precise vanishing result. 
 Indeed, on $K$-invariant sections, the operator $\cL^{\cS}_{\mu^t}$ is zero, so
the inequality (\ref{eq est DTmut}) becomes
\[
D_{T\mu^{t}}^2\geq  T\left(2 d(m) - \varepsilon \right) -C.
\]
Hence one gets the following result.
 \begin{proposition} \label{prop vanishing d}
 For every even-dimensional $K$-equivariant $\Spinc$-manifold $M$, with spinor bundle $\cS$ and proper moment map $\mu$, one has
\[
 \indK(\cS, \mu)^K = 0,
 \]
if the function $d$ is strictly positive.
 \end{proposition}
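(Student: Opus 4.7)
My plan is to mirror the proof of Theorem \ref{thm vanishing} from Subsection \ref{sec pf vanishing d}, using the crucial simplification (already signalled in the paragraph preceding the statement) that on $K$-invariant sections the operator $\cL^{\cS}_{\mu^t}$ vanishes identically. This eliminates the need for the representation-dependent bound of Lemma \ref{lem bound L mu t} and the constant $C_\pi$, and allows the hypothesis $d>0$ to be used directly.

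First I would decompose using Definition \ref{def index proper}: $\indK(\cS,\mu)^K = \sum_{\alpha \in \Gamma} [\indK(\cS|_{U_\alpha},\mu|_{U_\alpha}) : \text{triv}]$. By Corollary \ref{cor fin mult} applied to the trivial representation, only finitely many terms are nonzero, so it is enough to prove that each summand vanishes under the assumption $d>0$. Fix $\alpha \in \Gamma$ and pick a connected component $F \subset U_\alpha \cap Z_\mu$; by Lemma \ref{lem decomp Z mu} and properness of $\mu$, the set $F$ is compact, and $d$ is constant on $F$ with value $d(m)>0$ for any $m \in F$.

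Next, construct a $K$-invariant relatively compact neighbourhood $W$ of $F$ together with the deformation $\mu^t$ of $\mu|_W$ as in Subsection \ref{sec deform mu}, so that by Lemma \ref{lem Z mu t} and \eqref{eq index mu t} the computation of the trivial multiplicity in $\indK(\cS|_W,\mu|_W)$ is unchanged upon passing to $T\mu^t$. Cover $F$ by finitely many opens $V_1,\dots,V_n$ on each of which Proposition \ref{prop loc est d} supplies, for all $t \in (0,\delta]$ and on smooth sections of $\cS$ compactly supported in $V_j$,
\[
D_{T\mu^t}^2 - 2T\ii \cL^{\cS}_{\mu^t} \geq T(2d(m) - \varepsilon) - C.
\]
A standard partition-of-unity argument, as in \cite{Bismut91} and as used on sections supported in $W$ in the proof of Theorem \ref{thm vanishing}, promotes this to all of $\Gamma^{\infty}_c(\cS|_W)$.

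Now I restrict to the $K$-invariant part $\Gamma^{\infty}_c(\cS|_W)^K$. Since $\mu^t$ is $K$-equivariant, for any $K$-invariant section $s$ we have $\cL^{\cS}_{\mu^t} s = 0$, and the estimate becomes simply
\[
D_{T\mu^t}^2 \geq T(2d(m) - \varepsilon) - C.
\]
Because $d(m) > 0$, we may choose $\varepsilon \in (0, 2d(m))$ and then $T > C/(2d(m)-\varepsilon)$ to force $D_{T\mu^t}^2 > 0$ on $\Gamma^{\infty}_c(\cS|_W)^K$. Hence the $K$-invariant part of $\ker^{L^2}(D_{T\mu^t})$ vanishes, giving $[\indK(\cS|_W, \mu|_W):\text{triv}] = 0$. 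Summing over the finitely many relevant components $F$ in each $U_\alpha$, and over the finitely many contributing $\alpha$, yields the result. There is essentially no new obstacle: the only point requiring attention is that one cannot afford to also absorb a Lie derivative term as in Lemma \ref{lem bound L mu t}, but the whole idea is that on $K$-invariants this term is identically zero, so the hypothesis $d>0$ alone is what drives the positivity.
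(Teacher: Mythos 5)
Your proposal is correct and follows exactly the paper's intended argument: the paper itself only gives a one-sentence sketch, observing that on $K$-invariant sections $\cL^{\cS}_{\mu^t}$ vanishes, so that estimate \eqref{eq est DTmut} simplifies to $D_{T\mu^t}^2 \geq T(2d(m)-\varepsilon) - C$, and positivity of $d$ then forces the index's invariant part to vanish. Your write-up just makes explicit the steps (decomposition over $\Gamma$, compactness of components of $Z_\mu$, the deformation $\mu^t$, the local estimate and patching, and the choice of $T$) that the paper leaves implicit by reference to the proof of Theorem \ref{thm vanishing}.
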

 
\begin{remark}
Another approach to proving Theorem \ref{thm vanishing} would be  to note that Proposition 4.17 in \cite{Paradan14} generalises to the proper moment map case, because its proof in \cite{Paradan14} is based on local computations near connected components of $Z_{\mu}$. This would yield Proposition \ref{prop vanishing d}.

Via the estimate \eqref{eq est d}, this implies the case of Theorem \ref{thm vanishing} where $\pi$ is the trivial representation. This is enough to prove Theorem \ref{thm mult} for the $K$-invariant parts of both sides of \eqref{eq mult}, as in Section \ref{sec mult}. That in turn can be used to deduce Theorem \ref{thm vanishing} from the case of the trivial representation, via a shifting trick. 

The analytic proof of Theorem \ref{thm vanishing} in this section makes this paper more self-contained than the approach sketched above would. In addition, it illustrates the power of analytic localisation techniques, and highlights the key role of the function $d$ on $Z_{\mu}$ that is used. This function is also of central importance in \cite{Paradan14}, and it is interesting to see it emerge here in a very different way.
\end{remark}


\section{Multiplicativity} \label{sec mult}

The proof of Theorem \ref{thm mult} is based on the invariance of Braverman's index under homotopies of taming maps, Theorem \ref{thm htp invar}. An important condition in the definition of such a homotopy is that the map connecting two given taming maps is taming itself. In our arguments, we can make sure this condition is satisfied by replacing given taming maps by proper ones.

\subsection{Making taming maps proper} \label{sec mu proper}

A key ingredient of our proof of Theorem \ref{thm mult} is the possibly surprising fact that a taming moment map can always be replaced by a proper one, without changing the resulting index. This is in fact possible for any taming map.

Let $U$ be a connected, complete, even-dimensional  manifold, and suppose $g$ is a $K$-invariant Riemannian metric on $M$.
Let $\varphi: U \to \kk$ be a taming map, i.e.\ $Z_{\varphi}$ is compact. 
%
\begin{proposition} \label{prop mu proper}
Let $V\subset U$ be a $K$-invariant, relatively compact neighbourhood of $Z_{\varphi}$. 
Then there is a  taming map $\tilde \varphi: M \to \kk$ with
the following properties:
 \begin{itemize}
 \item $\tilde \varphi$ is proper;
 \item $\tilde \varphi|_V = \varphi|_V$;
 \item $\|\tilde \varphi\| \geq \|\varphi\|$;
 \item the vector fields $\vf^{\varphi}$ and $\vf^{\tilde \varphi}$ have the same set of zeroes;
 \item the function $\| \vf^{\tilde \varphi}\|$ on $U$ is proper.
 \end{itemize}
 In addition, $\varphi = \mu$ is a $\Spinc$-moment map for a $K$-equivariant $\Spinc$-structure on $U$, then $\tilde \varphi = \tilde \mu$ can be chosen to be a $\Spinc$-moment map for the same $\Spinc$-structure, but a different connection on the determinant line bundle.
\end{proposition}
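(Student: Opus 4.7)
The proof splits naturally into a general construction for any taming map $\varphi$ and a refined construction in the $\Spinc$-moment map case. My plan is to use a scalar rescaling $\tilde{\varphi} = (1 + \chi h)\varphi$ in the general case, and an additive shift $\tilde{\mu} = \mu + \eta\kappa$ coming from a connection change in the moment map case. A preliminary observation enabling both is that $Z_{\varphi} \subset V$ forces $\|\varphi\|$ and $\|\vf^{\varphi}\|$ to be strictly positive on $U\setminus V$, since $\{\varphi = 0\} \subset Z_{\varphi}$.

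For the general case, fix a smooth $K$-invariant proper function $p\colon U\to[0,\infty)$ and a smooth $K$-invariant cutoff $\chi\colon U\to[0,1]$ with $\chi\equiv 0$ on a neighbourhood of $\overline{V}$ and $\chi\equiv 1$ outside a slightly larger $K$-invariant open neighbourhood $V'$ of $\overline{V}$. On $U\setminus \overline{V}$, positivity of $\|\varphi\|$ and $\|\vf^{\varphi}\|$ allows the smooth $K$-invariant function $h := p(1/\|\varphi\| + 1/\|\vf^{\varphi}\|)$ to be defined, and $\chi h$ extends smoothly by zero to all of $U$. Setting $\tilde{\varphi} := (1+\chi h)\varphi$, the equality $\vf^{\tilde{\varphi}} = (1+\chi h)\vf^{\varphi}$ with everywhere-positive scalar factor gives $Z_{\tilde{\varphi}} = Z_{\varphi}$ and $\|\tilde{\varphi}\|\ge\|\varphi\|$, while $\chi h\|\varphi\|\ge \chi p$ and $\chi h\|\vf^{\varphi}\|\ge \chi p$ force both $\|\tilde{\varphi}\|$ and $\|\vf^{\tilde{\varphi}}\|$ to exceed $p$ outside $V'$, hence be proper.

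For the $\Spinc$-moment-map case the scalar rescaling cannot be used directly: Definition~\ref{def mom map} and Remark~\ref{rem deform conn} show that any moment-map change produced by a connection change $\nabla^L \mapsto \nabla^L - \ii\alpha$ has shift $\tilde{\mu} - \mu$ pointwise orthogonal to the stabiliser $\kk_m$, whereas $\chi h\mu$ generically has nontrivial $\kk_m$-components. The right replacement uses the smooth $K$-equivariant function $\kappa\in C^{\infty}(U,\kk)$ defined by
\[
(\kappa_m, X) := g(\vf^{\mu}_m, X^M_m)\quad\text{for all }X\in\kk;
\]
by construction $\kappa_m\perp\kk_m$ and $\kappa_m = 0 \iff \vf^{\mu}_m=0$. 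Substituting $X=\mu(m)$ and $X=\kappa_m$ yields the crucial positivity identities
\[
(\mu(m),\kappa_m)=\|\vf^{\mu}_m\|^2\quad\text{and}\quad g(\vf^{\mu}_m,\vf^{\kappa}_m)=\|\kappa_m\|^2.
\]
For nonnegative $\eta\in C^{\infty}(U)^K$ vanishing on $V$, set $\alpha := 2\eta (\vf^{\mu})^{\flat}$, a real, smooth, $K$-invariant $1$-form. Then $\nabla^L-\ii\alpha$ is a $K$-invariant Hermitian connection on the same determinant line bundle whose associated $\Spinc$-moment map, by the computation of Remark~\ref{rem deform conn}, is $\tilde{\mu} = \mu+\eta\kappa$. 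The identities above give
\[
\|\tilde{\mu}\|^2 = \|\mu\|^2 + 2\eta\|\vf^{\mu}\|^2 + \eta^2\|\kappa\|^2,\qquad \|\vf^{\tilde{\mu}}\|^2 = \|\vf^{\mu}\|^2 + 2\eta\|\kappa\|^2 + \eta^2\|\vf^{\kappa}\|^2,
\]
yielding $\tilde{\mu}|_V=\mu|_V$, $\|\tilde{\mu}\|\ge\|\mu\|$, and (since all three summands in the second expression are nonnegative) $Z_{\tilde{\mu}}=Z_{\mu}$.

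The main technical obstacle is choosing $\eta$ so that both $\|\tilde{\mu}\|$ and $\|\vf^{\tilde{\mu}}\|$ are proper while keeping $\eta$ smooth across $\partial V$. My plan is to fix a proper $K$-invariant $p\in C^{\infty}(U)^K$ dominating $\|\mu\|$ outside a compact set, and set $\eta := \chi\bigl((\|\mu\|+p)/\|\kappa\| + p/\|\kappa\|^2\bigr)$, with $\chi$ as in the general case. Then on $U\setminus\overline{V}$ one has $\eta\|\kappa\|\ge \chi(\|\mu\|+p)$ and $\eta\|\kappa\|^2\ge \chi p$; the first bound combined with the reverse triangle inequality forces $\|\tilde{\mu}\|\ge p$ outside $V'$, and the second yields $\|\vf^{\tilde{\mu}}\|^2\ge 2\chi p$ there, proving both properness statements. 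Smoothness of $\eta$ through $\partial V$ poses no problem since $\chi$ is supported away from $\overline{V}$, on which $\|\kappa\|>0$ so $1/\|\kappa\|$ and $1/\|\kappa\|^2$ are smooth on the support of $\chi$.
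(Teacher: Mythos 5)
Your proof is correct, and the case you carry out with the map $\kappa$ is essentially the paper's own construction, but your route through the general case is genuinely different. The paper defines a single deformation $\varphi^{\theta} := \varphi + \theta\psi$, where $\psi$ is exactly your $\kappa$ with $\mu$ replaced by $\varphi$ (defined by $(\psi,X) = g(\vf^{\varphi},X^U)$), and observes that the very same formula serves both the general taming-map case and the $\Spin^c$-moment-map case: in the $\Spin^c$ case, $\theta\psi$ is automatically the shift coming from the connection change $\nabla^L \mapsto \nabla^L - 2\ii\theta\alpha$. The paper's key inequalities are the same as yours ($g(\vf^{\varphi},\vf^{\varphi^{\theta}}) = \|\vf^{\varphi}\|^2 + \theta\|\psi\|^2$, Cauchy--Schwarz for the lower bound on $\|\varphi^{\theta}\|$, and then a choice of $\theta$ large against a proper function, for which the paper uses the Riemannian distance from a basepoint). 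You instead use the scalar rescaling $(1+\chi h)\varphi$ for the general statement and reserve the additive shift $\mu + \eta\kappa$ for the $\Spin^c$ refinement, correctly noting that the scalar rescaling cannot come from a connection change because the shift $\chi h \mu$ may fail to be orthogonal to stabilisers. This split is sound, and your positivity identities $(\mu,\kappa) = \|\vf^{\mu}\|^2$ and $g(\vf^{\mu},\vf^{\kappa}) = \|\kappa\|^2$ are precisely the ones the paper relies on; the only thing you miss is that the additive construction already handles both clauses at once (the ``in addition'' clause does not demand that $\tilde\varphi$ be a moment map in the general case), so the scalar-rescaling branch is an unnecessary detour, though a valid one.
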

(Note that the first part of this proposition is not specific to the $\Spinc$-setting.)

In the setting of Proposition \ref{prop mu proper}, if $\cE \to U$ is a $K$-equivariant Clifford module, then
Proposition \ref{prop excision} implies that
\[
\indK(\cE, \varphi) = \indK(\cE, \tilde \varphi).
\]

To prove Proposition \ref{prop mu proper}, we consider a
 nonnegative, $K$-invariant function $\theta \in C^{\infty}(U)^K$, and the map $\psi: U\to \kk$ defined by
 \begin{equation} \label{eq def psi}
 (\psi, X) := g(v^{\varphi}, X^U)
 \end{equation}
 for $X\in \kk$. Here the vector field $X^U$ is defined as in \eqref{eq def XM}. 
 We define the map $\varphi^\theta: U \to \kk$  by
\[
\varphi^{\theta} := \varphi + \theta \psi.
\]
If $\varphi = \mu$ is a $\Spinc$-moment map associated to a connection $\nabla^L$ on the determinant line bundle $L \to U$ of a given $\Spinc$-structure, then $\mu^{\theta}$ is the  $\Spinc$-moment map associated to the connection
\[
\nabla^L - 2 i \theta \alpha,
\]
on $L$, where $\alpha \in \Omega^1(U)^K$ is the one-form dual to $\vf^{\varphi}$. 
(This deformation of $\nabla^L$ is closely related to the deformation of Dirac operators as in Definition \ref{def deformed Dirac}, see Remark \ref{rem deform conn}.) 
We will prove Proposition \ref{prop mu proper} by showing that the map $\varphi^{\theta}$ has the desired properties for a well-chosen function $\theta$.

Let $\{X_1, \ldots, X_{\dim {\kk}}\}$ be an orthonormal  basis of $\kk$. Then 
\[
\varphi^{\theta} =  {\varphi} + {\theta} \sum_{j=1}^{\dim {\kk}} g(\vf^{\varphi}, X_j^U) X_j,
\]
so that
\begin{equation} \label{eq V mu f}
\vf^{\varphi^{\theta}} = \vf^{\varphi} + {\theta} \sum_{j=1}^{\dim {\kk}} g(\vf^{\varphi}, X_j^U) X_j^U.
\end{equation}
\begin{lemma} \label{lem V mu V mu f}
Let $m \in U$. Then $\vf^{\varphi^{\theta}}_m  = 0$ if and only if $\vf^{\varphi}_m = 0$.
\end{lemma}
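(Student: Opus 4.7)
The plan is to read off both directions of the equivalence directly from formula \eqref{eq V mu f}, using the nonnegativity of $\theta$ and the fact that a sum of nonnegative numbers vanishes only if each summand does.

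The easy direction is the implication $v^{\varphi}_m = 0 \Rightarrow v^{\varphi^{\theta}}_m = 0$: if $v^{\varphi}_m = 0$, then each coefficient $g(v^{\varphi}_m, (X_j^U)_m)$ in \eqref{eq V mu f} vanishes, hence so does $v^{\varphi^{\theta}}_m$.

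For the converse, assume $v^{\varphi^{\theta}}_m = 0$ and take the Riemannian inner product of both sides of \eqref{eq V mu f} with $v^{\varphi}_m$. Using that $(\psi(m), X_j) = g(v^{\varphi}_m, (X_j^U)_m)$, this gives
\[
0 = g\bigl(v^{\varphi^{\theta}}_m, v^{\varphi}_m\bigr) = \|v^{\varphi}_m\|^2 + \theta(m) \sum_{j=1}^{\dim \kk} g\bigl(v^{\varphi}_m, (X_j^U)_m\bigr)^2.
\]
Since $\theta \geq 0$ by assumption, both terms on the right hand side are nonnegative, so each must vanish separately. In particular $\|v^{\varphi}_m\|^2 = 0$, and hence $v^{\varphi}_m = 0$.

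There is essentially no obstacle here: the lemma is a one-line consequence of \eqref{eq V mu f} together with the nonnegativity of $\theta$. The real work will come later in choosing $\theta$ so that $\varphi^{\theta}$ is proper, has the right growth of $\|v^{\varphi^{\theta}}\|$, and (in the $\Spinc$ case) remains a $\Spinc$-moment map for a modified connection; this lemma just records that the deformation $\theta\psi$ cannot create or destroy zeros of the induced vector field, which is what allows one to freely choose $\theta$ in the subsequent arguments.
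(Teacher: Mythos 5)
Your proof is correct and follows essentially the same approach as the paper: the converse direction is exactly the paper's argument of pairing \eqref{eq V mu f} with $\vf^{\varphi}_m$ and using nonnegativity of $\theta$, and the forward direction is an equivalent reformulation of the paper's observation that $\varphi^{\theta}(m)=\varphi(m)$ when $\vf^{\varphi}_m=0$.
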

\begin{proof}
Let $m \in U$.
First, suppose $\vf^{\varphi}_m = 0$. Then 
$\varphi^{\theta}(m) = \varphi(m)$, so
$
\vf^{\varphi^{\theta}}_m  = \vf^{\varphi}_m  = 0.
$

To prove the converse implication, note that \eqref{eq V mu f} implies that
\begin{equation} \label{eq inner prod vfs}
g(\vf^{\varphi}, \vf^{\varphi^{\theta}}) = \|\vf^{\varphi}\|^2 + {\theta} \sum_{j=1}^{\dim {\kk}} g(\vf^{\varphi}, X_j^U)^2.
\end{equation}
Suppose $\vf^{\varphi^{\theta}}_m  = 0$. Since the second term on the right hand side of \eqref{eq inner prod vfs} is nonnegative, this implies that $\|\vf^{\varphi}_m\|^2 = 0$.
\end{proof}

In addition, the norm of $\varphi^{\theta}$ is as least as great as the norm of $\|\varphi\|$.
\begin{lemma} \label{lem norms mu f mu}
One has
\begin{equation} \label{eq norms mu f mu}
\|\varphi^{\theta}\| \geq \|\varphi\|.
\end{equation}
\end{lemma}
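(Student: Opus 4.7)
The plan is to compute $\|\varphi^\theta\|^2$ pointwise and show that each extra term is non-negative, given that $\theta \geq 0$.

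First I would expand
\[
\|\varphi^\theta\|^2 = \|\varphi\|^2 + 2\theta(\varphi, \psi) + \theta^2 \|\psi\|^2,
\]
using the $\Ad(K)$-invariant inner product on $\kk$. The last term is manifestly nonnegative, and $\theta \geq 0$ by assumption, so the whole matter reduces to proving the pointwise inequality $(\varphi(m), \psi(m)) \geq 0$ for all $m \in U$.

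The key step is to recognise that in fact $(\varphi, \psi) = \|\vf^\varphi\|^2$. Indeed, since the assignment $X \mapsto X^U$ is $\R$-linear, the definition \eqref{eq def V phi} gives $\vf^\varphi_m = \varphi(m)^U_m$, and so in the orthonormal basis $\{X_1, \ldots, X_{\dim \kk}\}$ one has $\vf^\varphi_m = \sum_j (\varphi(m), X_j) (X_j^U)_m$. Using \eqref{eq def psi},
\[
(\varphi(m), \psi(m)) = \sum_{j=1}^{\dim \kk} (\varphi(m), X_j) g\bigl(\vf^\varphi_m, (X_j^U)_m\bigr) = g\bigl(\vf^\varphi_m, \vf^\varphi_m\bigr) = \|\vf^\varphi_m\|^2 \geq 0.
\]

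Combining these, one obtains
\[
\|\varphi^\theta\|^2 = \|\varphi\|^2 + 2\theta \|\vf^\varphi\|^2 + \theta^2 \|\psi\|^2 \geq \|\varphi\|^2,
\]
which yields \eqref{eq norms mu f mu}. There is no serious obstacle here beyond spotting the identity $(\varphi,\psi) = \|\vf^\varphi\|^2$; once that is in hand, the nonnegativity of $\theta$ finishes the argument.
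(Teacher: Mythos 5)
Your proof is correct and rests on the same key identity as the paper's, namely $(\varphi, \psi) = \|\vf^\varphi\|^2$, but you exploit it in a slightly different (and cleaner) way. The paper computes $(\varphi^\theta(m), \varphi(m)) = \|\varphi(m)\|^2 + \theta\|\vf^\varphi_m\|^2 \geq \|\varphi(m)\|^2$ and then invokes the Cauchy--Schwarz inequality $\|\varphi^\theta(m)\|\|\varphi(m)\| \geq (\varphi^\theta(m), \varphi(m))$, which forces a separate treatment of the case $\varphi(m) = 0$ (where one cannot divide by $\|\varphi(m)\|$). You instead expand $\|\varphi^\theta\|^2$ directly, obtaining $\|\varphi\|^2 + 2\theta\|\vf^\varphi\|^2 + \theta^2\|\psi\|^2$, which is manifestly $\geq \|\varphi\|^2$ since $\theta \geq 0$; this dispenses with Cauchy--Schwarz and the edge case in one stroke. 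Both arguments are equally elementary; yours is marginally more streamlined, and you also make explicit the verification that $\vf^\varphi_m = (\varphi(m))^U_m$, which the paper leaves implicit in the phrase ``by definition.''
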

\begin{proof}
By definition of $\varphi^{\theta}$,
we have for all $m \in U$,
\[
\bigl(\varphi^{\theta}(m), \varphi(m) \bigr) = 
 \|\varphi(m)\|^2 + {\theta}\|\vf^{\varphi}_m\|^2 \geq \|\varphi(m)\|^2.
\]
By the Cauchy--Schwartz inequality, this implies that
\[
\|\varphi^{\theta}(m)\| \|\varphi(m)\| \geq \|\varphi(m)\|^2. 
\]
Hence \eqref{eq norms mu f mu} follows outside $\varphi^{-1}(0)$. And if $\varphi(m) = 0$, then $\vf^{\varphi}_m = 0$, so $\varphi^{\theta}(m) = \varphi(m)$.
\end{proof}

Proposition \ref{prop mu proper} follows from Lemmas \ref{lem V mu V mu f} and \ref{lem norms mu f mu},  
if we choose
$\tilde \varphi := \varphi^{\theta}$
with ${\theta}$ as in the following lemma.
\begin{lemma}
Let $V \subset U$ be a relatively compact, $K$-invariant neighbourhood of $Z_{\varphi}$. Then the function ${\theta}$ can be chosen such that ${\theta}|_V \equiv 0$, and the function  $\|\vf^{\varphi^{\theta}}\|$ and the  map $\varphi^{\theta}$  are proper.
\end{lemma}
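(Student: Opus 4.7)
The plan is to exhibit an explicit $\theta$ satisfying the required properties, based on the observation that $\|\vf^{\varphi^\theta}\|$ and $\|\varphi^\theta\|$ can both be bounded below by quantities involving $\theta\|\psi\|$. The key structural fact is that $\psi$ is nonvanishing outside $V$: the vector field $\vf^{\varphi}_m$ always lies in the tangent space $T_m(K\cdot m)$ (since $\vf^{\varphi}_m = \varphi(m)^U_m$ lies in the image of the map $A_m\colon \kk \to T_mU$, $X \mapsto X^U_m$), and $\psi(m) = A_m^*(\vf^{\varphi}_m)$ where $A_m^*$ is injective on $T_m(K\cdot m)$. Therefore, since $Z_{\varphi} \subset V$, the continuous function $\|\psi\|$ is strictly positive on $U \setminus V$.

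I would first record the two estimates that make the argument work. Starting from \eqref{eq V mu f} and using \eqref{eq inner prod vfs}, one obtains
\[
\|\vf^{\varphi^{\theta}}\|^2 \;=\; \|\vf^{\varphi}\|^2 + 2\theta \|\psi\|^2 + \theta^2 \|\psi^U\|^2 \;\geq\; 2\theta \|\psi\|^2,
\]
where $\psi^U$ denotes the vector field induced by the equivariant map $\psi$. The triangle inequality gives
\[
\|\varphi^{\theta}\| \;\geq\; \theta \|\psi\| - \|\varphi\|.
\]
Hence it suffices to construct a nonnegative $\theta \in C^{\infty}(U)^K$ with $\theta|_V \equiv 0$ such that both $\theta\|\psi\|^2$ and $\theta\|\psi\| - \|\varphi\|$ tend to infinity as $m$ leaves every compact subset of $U$.

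For this, I would pick a $K$-invariant exhaustion $\overline{V} \subset W_1 \subset W_2 \subset \cdots$ of $U$ by relatively compact open subsets with $\overline{W_n} \subset W_{n+1}$ and $\bigcup_n W_n = U$ (obtained by averaging any smooth exhaustion function over $K$ and taking sublevel sets). On each compact set $\overline{W_{n+1}} \setminus W_n$, which is disjoint from $V$, the function $\|\psi\|$ has a positive minimum, so
\[
c_n := \max\Bigl( \sup_{\overline{W_{n+1}}\setminus W_n} \tfrac{n}{\|\psi\|^2},\; \sup_{\overline{W_{n+1}}\setminus W_n} \tfrac{n+\|\varphi\|}{\|\psi\|} \Bigr) \;<\; \infty.
\]
Using a smooth partition of unity subordinate to the cover $\{W_{n+2}\setminus \overline{W_{n-1}}\}_{n\geq 1}$ of $U\setminus V$, together with a cut-off that vanishes on $V$, one builds a smooth nonnegative function $\theta_0$ on $U$ with $\theta_0 \geq c_n$ on $\overline{W_{n+1}} \setminus W_n$ and $\theta_0 \equiv 0$ on $V$. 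Averaging $\theta_0$ over $K$ (using $K$-invariance of the $W_n$, of $V$, of $\|\psi\|$ and of $\|\varphi\|$) yields a $K$-invariant $\theta$ with the same lower bounds. Combined with the two inequalities above, this proves both properness statements and the vanishing condition $\theta|_V \equiv 0$.

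The main (mild) obstacle is not any of the steps individually, but rather the need to reconcile $K$-invariance, smoothness, the vanishing condition on $V$, and two simultaneous growth requirements; the $K$-averaging step works only because the $W_n$ and the bounds $c_n$ are already $K$-invariant quantities. Note also that the final clause about the $\Spinc$-case is automatic: $\mu^{\theta}$ equals $\mu$ on $V$, and for the chosen $\theta$, as recorded in the paragraph preceding Lemma~\ref{lem V mu V mu f}, it is the $\Spinc$-moment map for the deformed connection $\nabla^L - 2i\theta\alpha$ on the same determinant line bundle.
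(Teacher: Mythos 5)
Your proposal is correct, and it rests on the same two key inequalities as the paper's proof: $\|\vf^{\varphi^{\theta}}\|^{2}\ge 2\theta\|\psi\|^{2}$ (from $g(\vf^{\varphi},\vf^{\psi})=\|\psi\|^{2}$) and $\|\varphi^{\theta}\|\ge\theta\|\psi\|-\|\varphi\|$, together with the observation that $\psi$ is nonvanishing off $V$. Where you diverge is in the device used to force both lower bounds to blow up: the paper picks a base point $m_{0}\in V$, uses the Riemannian distance $\delta$ to $m_{0}$, and demands $\theta\ge\max\bigl(\delta/\|\psi\|^{2},\ (\|\varphi\|+\delta)/\|\psi\|\bigr)$ outside a ball $B_{r_{0}+1}$; properness then follows because $\theta\|\psi\|^{2}\ge\delta$ and $\|\varphi^{\theta}\|\ge\delta$, with sublevel sets of $\delta$ compact by completeness of $U$ (Hopf--Rinow). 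You instead pick a $K$-invariant exhaustion $\overline{V}\subset W_{1}\subset W_{2}\subset\cdots$ by relatively compact opens, set $\theta\ge c_{n}$ on the shell $\overline{W_{n+1}}\setminus W_{n}$, and get $\theta\|\psi\|^{2}\ge n$ and $\theta\|\psi\|-\|\varphi\|\ge n$ off $W_{n}$. This buys you a construction that does not invoke completeness (it is assumed anyway in Subsection 5.1, so this is stylistic rather than a gain in generality), and you spell out the partition-of-unity and $K$-averaging steps that the paper leaves implicit in the phrase ``choose $\theta\in C^{\infty}(U)^{K}$ such that\dots''. Two small points worth tidying: the cover $\{W_{n+2}\setminus\overline{W_{n-1}}\}_{n\ge 1}$ needs a convention for $W_{0}$ (e.g.\ $W_{0}=\emptyset$) and care that the $n=1$ piece, which meets $V$, is killed by the cutoff; and your closing remark about the $\Spinc$-moment map is really part of Proposition~\ref{prop mu proper}, not of this lemma, though it is correctly justified by the paragraph preceding Lemma~\ref{lem V mu V mu f}.
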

\begin{proof}
Fix a point $m_0 \in V$. For any $m \in U$, let $\delta(m)$ be the Riemannian distance from $m$ to $m_0$. 
Write
\[
B_r := \{x \in U; \delta(m) \leq r\}.
\] 
Choose $r_0 > 0$ such that $V \subset B_{r_0}$.

Since $\vf^{\varphi}$ is tangent to orbits, and does not vanish outside $V$, the map $\psi$ does not vanish outside $V$. Choose ${\theta} \in C^{\infty}(U)^K$ such that
\begin{itemize}
\item ${\theta}|_{B_{r_0}} \equiv 0$;
\item for all $m \in U\setminus B_{r_0 + 1}$,
\[
{\theta}(m) \geq \max\left( \frac{\delta(m)}{\|\psi(m)\|^2}, \frac{\|\varphi(m)\| + \delta(m)}{\|\psi(m)\|} \right).
\]
\end{itemize}
For this choice of $\theta$, we have
\[
\theta \|\psi\|^2 \geq \delta,
\]
outside $B_{r_0 + 1}$,
so $\theta \|\psi\|^2$ is a proper function. Since
\[
\|\vf^{\varphi^{\theta}}\|^2 = \|\vf^{\varphi} + \theta v^{\psi}\|^2 \geq 2\theta g(\vf^{\psi}, \vf^{\varphi}) = 2\theta \|\psi\|^2,
\]
the function $\|\vf^{\varphi^{\theta}}\|$ is proper as well.

Next, 
note that 
\[
\|\varphi^{\theta}\| \geq \theta \Bigl\| \sum_{j=1}^{\dim {\kk}} g\bigl(\vf^{\varphi}, X_j^U\bigr) X_j \Bigr\| - \|\varphi\|.
\]
Since 
\[
 \Bigl\| \sum_{j=1}^{\dim {\kk}} g\bigl(\vf^{\varphi}, X_j^U \bigr) X_j \Bigr\| = \|\psi(m)\|,
\]
we have
for all $r \geq r_0 + 1$ and all $m \in U \setminus B_r$,
\[
\|\varphi^{\theta}(m)\| \geq {\theta}\|\psi(m)\| - \|\varphi(m)\| \geq \delta(m) \geq r.
\]
So the inverse image under $\varphi^{\theta}$ of the ball in $\kk$ of radius $r$ is contained in $B_r$. Because $U$ is complete, this inverse image is therefore compact. 
\end{proof}


\subsection{A first localisation} \label{sec mult invar}

Consider the setting of Subsection \ref{sec thm mult}. In the proof of Theorem \ref{thm mult}, we will use invariance of Braverman's index under homotopies of taming maps, as in Theorem \ref{thm htp invar}. To construct a suitable homotopy, we decompose, as in Lemma \ref{lem decomp Z mu},
\begin{align}
Z_{\mu_M} &= \bigcup_{\alpha \in \Gamma_M} K\cdot  \left( M^{\alpha} \cap \mu_M^{-1}(\alpha)\right); \label{eq decomp Z mu M} \\
Z_{\mu_{M\times N}} &= \bigcup_{\beta \in \Gamma_{M\times N}} K\cdot  \left( (M\times N)^{\beta} \cap \mu_{M\times N}^{-1}(\beta)\right) \label{eq decomp Z mu M N},
\end{align}
for discrete subsets $\Gamma_M$ and $\Gamma_{M\times N}$ of $\kt^*_+$.

Let $\pi \in \hat K$ be an irreducible representation of $K$. For such a $\pi$, let $C_{\pi}$ be the constant in Theorem \ref{thm vanishing}. Set
\[
C^N_{\pi} := \max\bigl\{C_{\pi'}; \text{$\pi' \in \hat K$ is equal to $\pi$ or occurs in $\ind_K(\cS_N)^*\otimes \pi$}\bigr\}.
\]
(Note that $\ind_K(\cS_N)$ is finite-dimensional because $N$ is compact, so this maximum is well-defined.)
Let $C_N>0$ be such that $\|\mu_N(n)\| < C_N$ for all points $n$ in the compact manifold $N$. Consider the set
\[
U_M := \{m \in M; \|\mu_M(m)\| < C^N_{\pi} + 3C_N + 1 + \varepsilon\},
\]
for an $\varepsilon > 0$ such that
\begin{itemize}
\item 
for all $\alpha \in \Gamma_M$, $\|\alpha\| \not= C^N_{\pi} + 3C_N + \varepsilon$;
\item $C^N_{\pi} + 3C_N + \varepsilon$ is a regular value of $\|\mu_M\|$, and $\partial U_M$ is a smooth hypersurface in $M$.
\end{itemize}
Furthermore, we will use the set
\[
U^{M \times N} := \{(m, n) \in M \times N; \|\mu_{M\times N}(m, n) \| < C^N_{\pi} + 2C_N + 1 + \varepsilon\},
\]
for an $\varepsilon > 0$ such that
for all $\beta \in \Gamma_{M\times N}$, $\|\beta\| \not= C^N_{\pi} + 2C_N + 1 + \varepsilon$.

By properness of $\mu_M$ and compactness of $N$, the sets $U_M$ and $U^{M \times N}$ are relatively compact. Furthermore, we have
\[
\begin{split}
Z_{\mu_M} \cap \partial U_M& = \emptyset; \\
Z_{\mu_{M\times N}} \cap \partial U^{M \times N}& = \emptyset.
\end{split}
\]
So  $Z_{\mu_M} \cap U_M$ and $Z_{\mu_{M\times N}} \cap U^{M \times N}$ are compact. By Theorem \ref{thm vanishing} and Proposition \ref{prop additivity}, we have
\begin{equation} \label{eq loc mu M N}
\bigl[ \indK(\cS_{M\times N}, \mu_{M \times N}) : \pi \bigr] = \bigl[ \indK(\cS_{M\times N}|_{U^{M \times N}}, \mu_{M \times N}|_{U^{M \times N}}) : \pi \bigr]. 
\end{equation}
Here we used the fact that  $\| \mu_{M\times N}(m, n)\| > C_{\pi}$ for $(m, n) \in (M\times N)\setminus U^{M\times N}$.

Using Theorem \ref{thm mult index}, and then applying Theorem \ref{thm vanishing} and Proposition \ref{prop additivity} on $M$, we also find that  
\begin{equation} \label{eq loc hat mu M}
\begin{split}
\bigl[ \indK(\cS_{M\times N}, \hat \mu_{M}) : \pi \bigr] &= \bigl[ \indK(\cS_{M},  \mu_{M}) \otimes \ind_K(\cS_N): \pi \bigr] \\
&= \sum_{\pi' \in \hat K} [\ind_K(\cS_N)^* \otimes \pi: \pi'] \cdot \bigl[ \indK(\cS_{M},  \mu_{M}): \pi' \bigr] \\
&= \sum_{\pi' \in \hat K} [\ind_K(\cS_N)^* \otimes \pi: \pi'] \cdot \bigl[ \indK(\cS_{M}|_{U_M},  \mu_{M}|_{U_M}): \pi' \bigr] \\
&= \bigl[ \indK(\cS_{M\times N}|_{U_M \times N}, \hat \mu_{M}|_{U_M \times N}) : \pi \bigr]. 
\end{split}
\end{equation}
Here we used the fact that for all $\pi'$ occurring in $\ind_K(\cS_N)^* \otimes \pi$ and all  $m \in M\setminus U_M$, we have $\| \mu_{M}(m)\| > C_{\pi'}$.

\subsection{A homotopy of taming maps}

Consider the set
\[
V := \{(m, n) \in M \times N; \|\mu_{M\times N}(m, n) \| < C^N_{\pi} + 2C_N + \varepsilon\},
\]
for an $\varepsilon  \specialin ]0, 1[$ such that
for all $\beta \in \Gamma_{M\times N}$, $\|\beta\| \not= C^N_{\pi} + 2C_N + \varepsilon$.
Its closure $\overline{V}$ is contained in $(U_M \times N) \cap U^{M \times N}$. Indeed, it obviously lies inside $U^{M \times N}$, while for all $(m, n) \in \overline{V}$,
\[
\|\mu_M(m)\| \leq \|\mu_{M\times N} (m, n)\|+ C_N \leq C^N_{\pi} + 3C_N + 1,
\]
so $m \in U_M$. In particular, the closure of the projection $V_M$ of $V$ to $M$ is contained in $U_M$. 

Since $\partial U_M$ is a smooth hypersurface, we can make $U_M$ complete by rescaling the Riemannian metric as in Subsection \ref{sec index noncomplete}. Hence Proposition \ref{prop mu proper} applies to $\mu_M|_{U_M}$.
Let $\tilde \mu_{U_M}: U_M \to \kk^*$ be the resulting proper moment map,
chosen such that $\tilde \mu_{U_M}|_{V_M} = \mu_M|_{V_M}$. (Note that $V_M$ is not necessarily a neighbourhood of $Z_{\mu} \cap U_M$, but it is contained in such a neighbourhood, which is enough.) Define the map
\[
\tilde \mu_{U_M \times N}: U_M \times N \to \kk^*
\]
by
\[
\tilde \mu_{U_M \times N}(m, n) = \tilde \mu_{U_M}(m) + \mu_N(n),
\]
for $m \in U_M$ and $n \in N$. Then $\tilde \mu_{U_M \times N}$ is proper, because $\tilde \mu_{U_M}$ is, and $N$ is compact. 

We will use a homotopy argument to prove the following result.
\begin{proposition} \label{prop htp mult}
The map $\tilde \mu_{U_M \times N}$ is taming, and we have
\begin{equation} \label{eq htp mult}
\indK\bigl(\cS_{M\times N}|_{U_M \times N}, \tilde \mu_{U_M \times N}\bigr) = \indK\bigl(\cS_{M\times N}|_{U_M \times N}, \hat \mu_M|_{U_M \times N}\bigr). 
\end{equation}
\end{proposition}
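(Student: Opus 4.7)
The plan is to verify the two claims separately, using the same underlying estimate in both. First I would show that $\tilde\mu_{U_M\times N}$ is taming; then, using Theorem \ref{thm htp invar}, Theorem \ref{thm mult index} and Proposition \ref{prop excision}, I would link $\indK(\cS_{M\times N}|_{U_M\times N}, \tilde\mu_{U_M\times N})$ to $\indK(\cS_{M\times N}|_{U_M\times N}, \hat\mu_M|_{U_M\times N})$ by a short chain of equalities. The decisive technical input throughout is the conclusion of Proposition \ref{prop mu proper} that $\|\vf^{\tilde\mu_{U_M}}\|$ is proper on $U_M$.

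For the taming property, suppose $(m,n)\in Z_{\tilde\mu_{U_M\times N}}$. The $M$-component of the induced vector field, namely $\vf^{\tilde\mu_{U_M}}_m + (\mu_N(n))^M_m$, must vanish. Since $\overline{U_M}$ is compact, the quantity $K_{U_M} := \sup\{\|X^M_m\| : \|X\|\le 1,\, m\in\overline{U_M}\}$ is finite, and $\|\mu_N\|\le C_N$ on $N$; hence $\|\vf^{\tilde\mu_{U_M}}_m\|\le C_N K_{U_M}$. Properness of $\|\vf^{\tilde\mu_{U_M}}\|$ confines the $M$-projection of $Z_{\tilde\mu_{U_M\times N}}$ to a compact subset of $U_M$, and compactness of $N$ then yields compactness of $Z_{\tilde\mu_{U_M\times N}}$ in $U_M\times N$.

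To establish the index equality, I would introduce the straight-line homotopy
\[
\varphi_s(m,n) := \tilde\mu_{U_M}(m) + s\,\mu_N(n),\qquad s\in[0,1],
\]
suitably reparametrised near $s=0,1$ to satisfy Definition \ref{def htp}. At a zero $(m,n,s)$ of the combined vector field on $U_M\times N\times[0,1]$, exactly the same estimate gives $\|\vf^{\tilde\mu_{U_M}}_m\|\le sC_N K_{U_M}\le C_N K_{U_M}$, so by the previous paragraph the zero set of this combined map is compact in $U_M\times N\times[0,1]$. Thus $\varphi$ is a homotopy of taming maps between $\hat{\tilde\mu}_{U_M}$ (the pullback of $\tilde\mu_{U_M}$ to $U_M\times N$) at $s=0$ and $\tilde\mu_{U_M\times N}$ at $s=1$; Theorem \ref{thm htp invar}, interpreted on the possibly non-complete manifold $U_M\times N$ via Definition \ref{def index noncomplete}, yields
\[
\indK(\cS_{M\times N}|_{U_M\times N},\tilde\mu_{U_M\times N})=\indK(\cS_{M\times N}|_{U_M\times N},\hat{\tilde\mu}_{U_M}).
\]

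To close the argument, apply Theorem \ref{thm mult index} to the right-hand side (in the non-complete setting of Definition \ref{def index noncomplete}) to rewrite it as $\indK(\cS_M|_{U_M},\tilde\mu_{U_M})\otimes\ind_K(\cS_N)$. By the construction in Proposition \ref{prop mu proper} together with the remark that $V_M$ is contained in a $K$-invariant neighbourhood of $Z_{\mu_M}\cap U_M$, the maps $\tilde\mu_{U_M}$ and $\mu_M|_{U_M}$ coincide on a neighbourhood of $Z_{\mu_M}\cap U_M$; Proposition \ref{prop excision} then identifies $\indK(\cS_M|_{U_M},\tilde\mu_{U_M})$ with $\indK(\cS_M|_{U_M},\mu_M|_{U_M})$, and one more invocation of Theorem \ref{thm mult index} recovers $\indK(\cS_{M\times N}|_{U_M\times N},\hat\mu_M|_{U_M\times N})$. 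The main obstacle is the verification that the homotopy is through taming maps; this is precisely the step where the otherwise surprising property that the taming moment map $\mu_M$ may be replaced by a map $\tilde\mu_{U_M}$ for which $\|\vf^{\tilde\mu_{U_M}}\|$ is \emph{proper} becomes indispensable, because without this the $M$-coordinate of a zero could escape to the boundary of $U_M$ as $s$ varies.
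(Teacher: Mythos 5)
Your argument is correct and follows essentially the same route as the paper: the same homotopy $\tilde\mu_{U_M}(m)+s\,\mu_N(n)$, with compactness of its zero set secured by properness of $\|\vf^{\tilde\mu_{U_M}}\|$ exactly as in the paper's Lemma \ref{lem Z phi cpt}, followed by Theorem \ref{thm htp invar}. The only (minor) difference is in the last step: where the paper applies excision (the comment below Proposition \ref{prop mu proper}) directly on the product $U_M\times N$ to pass from $\widehat{\tilde\mu}_{U_M}$ to $\hat\mu_M|_{U_M\times N}$, you instead go through the tensor-product factorisation via Theorem \ref{thm mult index} twice and excision on $U_M$; both are valid.
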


First, note that  by the comment below Proposition \ref{prop mu proper}, we have
\begin{equation} \label{eq index mu mu tilde}
\indK\bigl(\cS_{M\times N}|_{U_M \times N}, \hat \mu_M|_{U_M \times N}\bigr) = \indK\bigl(\cS_{M\times N}|_{U_M \times N}, \widehat{ \widetilde{\mu}}_{U_M}\bigr).
\end{equation}
Therefore, to prove Proposition \ref{prop htp mult}, it is enough to show that the left hand side of \eqref{eq htp mult} equals the right hand side of \eqref{eq index mu mu tilde}. 
To prove that equality, we will construct a homotopy of taming maps between $ \tilde \mu_{U_M \times N}$ and $\widehat{ \widetilde{\mu}}_{U_M}$. Then Proposition \ref{prop htp mult} follows from Theorem \ref{thm htp invar}.

Let $\lambda \in C^{\infty}(\R)$ be a function with values in $[0,1]$, such that
\[
\lambda(t) = \left\{\begin{array}{ll} 
0 & \text{if $t \leq 1/3$;} \\
1 & \text{if $t \geq 2/3$.}
\end{array} \right.
\]
Set $W := U_M \times N \times [0,1]$. Define the map $\varphi: W \to \kk^*$ by
\[
\varphi(m, n, t) := \tilde \mu_{U_M}(m) + \lambda(t) \mu_N(n),
\]
for $m \in U_M$, $n \in N$ and $t \in [0,1]$. We will show that $Z_{\varphi}$ is compact, so that $\varphi$ defines a homotopy of taming maps between $\tilde \mu_{U_M \times N}$ and $\widehat{ \widetilde{\mu}}_{U_M}$. This also implies that $\tilde \mu_{U_M \times N}$ is taming. Therefore, the following lemma implies Proposition \ref{prop htp mult}.
\begin{lemma} \label{lem Z phi cpt}
The set $Z_{\varphi} \subset W$ where $\vf^{\varphi}$ vanishes is compact.
\end{lemma}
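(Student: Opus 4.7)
}
The set $Z_{\varphi}$ is closed in $W$ because $\vf^{\varphi}$ is continuous. Since $N\times[0,1]$ is compact, to prove $Z_{\varphi}$ compact it is enough to show that the projection of $Z_{\varphi}$ to $U_M$ is contained in a compact subset of $U_M$. My plan is therefore to bound the $U_M$-coordinate of points in $Z_{\varphi}$ using the two properties of $\tilde\mu_{U_M}$ coming from Proposition \ref{prop mu proper}: properness, and properness of $\|\vf^{\tilde\mu_{U_M}}\|$.

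At $(m,n,t)\in Z_{\varphi}$ the equation $\vf^{\varphi}_{(m,n,t)}=0$ splits, according to the decomposition $T(U_M\times N)\cong TU_M\oplus TN$, into the two equations
\[
\vf^{\tilde\mu_{U_M}}_m=-\lambda(t)\,\mu_N(n)^{U_M}_m,\qquad \tilde\mu_{U_M}(m)^N_n+\lambda(t)\,\mu_N(n)^N_n=0.
\]
The first equation is the crucial one. The right-hand side is uniformly bounded: $N$ is compact so $\|\mu_N(n)\|\leq C_N$, we have $\lambda(t)\in[0,1]$, and the Lie algebra action $(X,m)\mapsto X^{M}_m$ is continuous on the compact set $\overline{B_{C_N}}\times\overline{U_M}$ (recall $\overline{U_M}$ is compact in $M$ by the relative compactness arranged in Subsection \ref{sec mult invar}). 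Hence there is a constant $D>0$ with $\|\vf^{\tilde\mu_{U_M}}_m\|\leq D$ for every $(m,n,t)\in Z_{\varphi}$.

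I would then invoke the fifth bullet of Proposition \ref{prop mu proper}: the function $\|\vf^{\tilde\mu_{U_M}}\|$ is proper on $U_M$. Its sublevel set at height $D$ is therefore a compact subset of $U_M$, which contains the projection of $Z_{\varphi}$ to $U_M$. Combining with the compactness of $N\times[0,1]$ gives the desired compactness of $Z_{\varphi}$.

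The one place where care is needed, and which I would flag as the main technical obstacle, is a compatibility of norms: Proposition \ref{prop mu proper} is applied to $U_M$ equipped with the rescaled metric $g^{\chi}$ of Subsection \ref{sec index noncomplete}, so \emph{a priori} the properness of $\|\vf^{\tilde\mu_{U_M}}\|$ and the boundedness on $Z_{\varphi}$ must be measured in the same metric. This is easily arranged by choosing the function $\chi$ and then bounding $\|\mu_N(n)^{U_M}_m\|_{g^{\chi}}$ on the relatively compact set $\overline{U_M}\times N$ — alternatively one can fall back on property 1 of Proposition \ref{prop mu proper} (properness of $\tilde\mu_{U_M}$ itself), combined with the observation that $\tilde\mu_{U_M}(m)-\mu_M(m)\in\kk_m^{\perp}$ by the construction $\tilde\mu=\mu+\theta\psi$, to bound $\|\pi_{\kk_m^{\perp}}(\tilde\mu_{U_M}(m))\|$ on $Z_{\varphi}$ by $C_N$ and hence confine $m$ to a compact subset of $U_M$.
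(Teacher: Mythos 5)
Your core argument coincides with the paper's: the paper likewise introduces the set $Z=\{m\in U_M:\ \vf^{\tilde\mu_{U_M}}_m+s(\mu_N(n))^M_m=0\ \text{for some}\ (n,s)\in N\times[0,1]\}$, bounds $\|\vf^{\tilde\mu_{U_M}}_m\|$ on it using compactness of $N\times[0,1]$ and relative compactness of $U_M$, invokes the fifth bullet of Proposition \ref{prop mu proper}, and finishes via closedness of $Z_\varphi$. So the route is the same.

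The metric issue you flag is real and deserves to be made explicit, because the paper's own wording passes over it. Proposition \ref{prop mu proper} is applied to $U_M$ with the rescaled metric $g^\chi=\chi^2 g$ (the only way to make $U_M$ complete), so the properness of $\|\vf^{\tilde\mu_{U_M}}\|$ it delivers is the $g^\chi$-norm. The bound $\|(\mu_N(n))^M_m\|\le C$, on the other hand, comes from continuity in the ambient metric $g$ on the relatively compact set $U_M\times N\subset M\times N$. These two do not match: $\|(\mu_N(n))^{U_M}_m\|_{g^\chi}=\chi(m)\,\|(\mu_N(n))^M_m\|_g$ is \emph{unbounded} on $U_M$, since any $\chi$ making $U_M$ complete in $g^\chi$ must blow up near $\partial U_M$. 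For the same reason your first proposed remedy cannot work as stated: no admissible choice of $\chi$ keeps the $g^\chi$-norm of a generic nonzero tangent vector bounded near $\partial U_M$.

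Your second remedy does work, and it is arguably the cleaner route because it is entirely metric-free. At $(m,n,t)\in Z_\varphi$ the vanishing of the $TU_M$-component reads $\bigl(\tilde\mu_{U_M}(m)+\lambda(t)\mu_N(n)\bigr)^{U_M}_m=0$, i.e.\ $\tilde\mu_{U_M}(m)+\lambda(t)\mu_N(n)\in\kk_m$, so the component of $\tilde\mu_{U_M}(m)$ orthogonal to $\kk_m$ has norm at most $C_N$ in the fixed $\Ad$-invariant inner product on $\kk$. The component of $\tilde\mu_{U_M}(m)=\mu_M(m)+\theta(m)\psi(m)$ lying in $\kk_m$ equals that of $\mu_M(m)$, because $\psi(m)\perp\kk_m$ no matter which metric is used to define $\psi$ (as $X^{U_M}_m=0$ for $X\in\kk_m$); and $\|\mu_M\|$ is bounded on $U_M$ by its defining inequality. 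Hence $\|\tilde\mu_{U_M}\|$ is bounded on the $U_M$-projection of $Z_\varphi$, and properness of the \emph{map} $\tilde\mu_{U_M}$ (first bullet of Proposition \ref{prop mu proper}, a metric-independent notion) yields relative compactness. Alternatively one can strengthen the choice of $\theta$ in the proof of Proposition \ref{prop mu proper} — for instance impose $\theta\ge\chi^2\delta/\|\psi\|^2$ outside $B_{r_0+1}$ — so that $\|\vf^{\tilde\mu_{U_M}}\|$ is proper in the ambient metric $g$ and not only in $g^\chi$, after which the argument as originally phrased goes through verbatim.
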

\begin{proof}
We first claim that
the set
\[
Z := \bigl\{m\in U_M; \vf^{\tilde \mu_{U_M}}_m + s (\mu_N(n))^M_m = 0 \text{ for some $(n, s) \in N\times [0,1]$} \bigr\}
\]
is relatively compact in $U_M$. (Here $(\mu_N(n))^M$ is the vector field on $M$ induced by $\mu_N(n) \in \kk$, as in \eqref{eq def XM}.)
Indeed, 
the function $(m, n)\mapsto \|(\mu_N(n))^M_m\|$ on $M\times N$ is bounded on the relatively compact set $U_M \times N$. Let $C>0$ be an upper bound. If $m\in U_M$, $n\in N$ and $s \in [0,1]$, and 
\[
\vf^{\tilde \mu_{U_M}}_m + s (\mu_N(n))^M_m = 0, 
\]
then
\[
\| \vf^{\tilde \mu_{U_M}}_m \| =  s \|(\mu_N(n))^M_m \| \leq C.
\]
By the fifth point in Proposition \ref{prop mu proper}, the function $\| \vf^{\tilde \mu_{U_M}} \|$ on $U_M$ is proper, so $Z$ is relatively compact.

Now
let $(m, n, t) \in Z_{\varphi}$. Then
\[
0 = \vf^{\varphi}_{(m, n, t)} = \bigl(v^{\tilde \mu_{U_M}}_m + \lambda(t)(\mu_N(n))^M_m, (\tilde \mu_{U_M}(m))^N_n + \vf^{\mu_N}_n \bigr) \quad \in T_mM \times T_nN.
\]
The vanishing of the first component implies that $m$ lies in the relatively compact set $Z$. Since $Z_{\varphi}$ is closed, the claim follows.
\end{proof}

\subsection{Proof of Theorem \ref{thm mult}}

After Proposition \ref{prop htp mult}, the next step in the proof of Theorem \ref{thm mult} is the following application of Theorem \ref{thm vanishing}.
\begin{lemma} \label{lem vanish mult}
We have
\[
\bigl[ \indK(\cS_{M\times N}|_{U^{M \times N}}, \mu_{M \times N}|_{U^{M \times N}}) : \pi \bigr] = \bigl[ \indK(\cS_{M\times N}|_{U_M \times N}, \tilde \mu_{U_M \times N}) : \pi \bigr].
\]
\end{lemma}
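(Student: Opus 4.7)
My plan is to reduce both indices in the statement to sums over the zero-set components lying in the common region $V$, on which the moment maps $\mu_{M\times N}$ and $\tilde\mu_{U_M\times N}$ coincide by construction, and to discard the remaining components via the proper-moment-map version of Theorem \ref{thm vanishing}.

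First, I would record two containments. Since $V$ is defined by $\|\mu_{M\times N}\| < C^N_\pi + 2C_N + \varepsilon$, one has $V \subset U^{M\times N}$; and for $(m,n) \in V$, the triangle inequality gives
\[
\|\mu_M(m)\| \leq \|\mu_{M\times N}(m,n)\| + \|\mu_N(n)\| < C^N_\pi + 3C_N + \varepsilon,
\]
so that $V \subset U_M \times N$ as well. The choice $\tilde\mu_{U_M}|_{V_M} = \mu_M|_{V_M}$ from the application of Proposition \ref{prop mu proper} then yields $\mu_{M\times N}|_V = \tilde\mu_{U_M\times N}|_V$.

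The crucial estimate will be that $\|\tilde\mu_{U_M\times N}\| > C_\pi$ on $(U_M\times N) \setminus V$. For $(m,n)$ in this set, the reverse triangle inequality gives $\|\mu_M(m)\| \geq \|\mu_{M\times N}(m,n)\| - \|\mu_N(n)\| > C^N_\pi + C_N + \varepsilon$; combined with the inequality $\|\tilde\mu_{U_M}\| \geq \|\mu_M\|$ from the third item of Proposition \ref{prop mu proper}, this yields
\[
\|\tilde\mu_{U_M\times N}(m,n)\| \geq \|\tilde\mu_{U_M}(m)\| - \|\mu_N(n)\| > C^N_\pi + \varepsilon > C_\pi.
\]
Symmetrically, $\|\mu_{M\times N}\| > C_\pi$ on $U^{M\times N} \setminus V$ directly from the definition of $V$.

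With these estimates in hand, I would decompose each index as a sum over connected components of the respective zero set intersected with small tubular neighborhoods, using Definition \ref{def index proper} for the right-hand side and Proposition \ref{prop additivity} (together with Proposition \ref{prop excision}) for the left-hand side, whose zero set in $U^{M\times N}$ is compact. The extension of Theorem \ref{thm vanishing} to proper moment maps noted in the remark following its proof eliminates every component sitting outside $V$, since on such a component the relevant moment map exceeds $C_\pi$ in norm. The surviving components lie inside $V$, where the two moment maps—and hence the two zero sets—coincide; on a common $K$-invariant neighborhood of such a component, Proposition \ref{prop excision} equates the corresponding local indices. Summing over the common components yields the claimed equality of $\pi$-multiplicities. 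The main subtlety, already exploited above, is the one-sided nature of Proposition \ref{prop mu proper}: we only have $\|\tilde\mu\| \geq \|\mu\|$, not equality, so a direct bound on $\|\tilde\mu\|$ does not translate into a bound on $\|\mu\|$; this is why the "outside $V$" estimate has to route through $\|\mu_M\|$ first, and why the radii defining $V$ and $U^{M\times N}$ are spaced with exactly the $C_N$-buffers needed to absorb $\|\mu_N\|$ twice.
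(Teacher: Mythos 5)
Your proposal is correct and follows essentially the same route as the paper's proof: you establish the two norm estimates on the sets $(U_M\times N)\setminus V$ and $U^{M\times N}\setminus V$ (with the same care about the one-sided inequality $\|\tilde\mu_{U_M}\|\geq\|\mu_M\|$ that the paper exploits), localise both indices to $V$ by discarding the parts outside via Theorem \ref{thm vanishing}, and conclude from the coincidence $\mu_{M\times N}|_V=\tilde\mu_{U_M\times N}|_V$. The only cosmetic difference is that you phrase the localisation component-by-component via Definition \ref{def index proper} and Proposition \ref{prop excision}, whereas the paper makes one application of Proposition \ref{prop additivity} across the single hypersurface $\partial V$; these are interchangeable given the regularity conditions built into the choice of $\varepsilon$.
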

\begin{proof}
By definition of the set $V$, we have for all $(m, n) \in (U_M \times N) \setminus V$,
\[
\|\mu_{M\times N}(m, n)\| \geq C^N_{\pi} + 2C_N + \varepsilon > C_{\pi}.
\]
And if $(m, n) \in U^{M \times N} \setminus V$, then the third point in Proposition \ref{prop mu proper} implies that
\[
\begin{split}
\|\tilde \mu_{U_M \times N}(m, n)\| &= \|  \tilde \mu_{U_M}(m) + \mu_N(n)\| \\
	&\geq \|  \tilde \mu_{U_M}(m)\| - C_N \\
	&\geq  \|   \mu_{U_M}(m)\| - C_N \\
	&\geq  \|   \mu_{U_M}(m)+ \mu_N(n)\| - 2C_N \\
	&> C_{\pi}.
\end{split}
\]
Furthermore, the choice of the number $\varepsilon$ in the definition of the set $V$ implies that the vector field $\vf^{\mu_{M\times N}}$ does not vanish on $\partial V$. 
Because  $V\subset V_M \times N$, and $\tilde \mu_{U_M}|_{V_M} = \mu_M|_{V_M}$, we have
\begin{equation}\label{eq mu mu tilde}
\mu_{M \times N}|_{V} = \tilde \mu_{U_M \times N}|_V.
\end{equation}
Therefore, 
 the vector fields $\vf^{\mu_{M\times N}}$ and $\vf^{\tilde \mu_{U_M\times N}}$ coincide on $V$. So the latter vector field does not vanish on $\partial V$ either. Finally, note that both $\mu_{M \times N}|_{U^{M \times N}}$ and $\tilde \mu_{U_M \times N}$ are taming moment maps. 

By the preceding arguments, Theorem \ref{thm vanishing} and Proposition \ref{prop additivity} imply that
\[
\bigl[ \indK(\cS_{M\times N}|_{U^{M \times N}}, \mu_{M \times N}|_{U^{M \times N}}) : \pi \bigr] = 
	\bigl[ \indK(\cS_{M\times N}|_{V}, \mu_{M \times N}|_{V}) : \pi \bigr],
\]
and
\[
\bigl[ \indK(\cS_{M\times N}|_{U_M \times N}, \tilde \mu_{U_M \times N}) : \pi \bigr] =  \bigl[ \indK(\cS_{M\times N}|_{V}, \tilde \mu_{U_M \times N}|_V) : \pi \bigr].
\]
Applying  \eqref{eq mu mu tilde} to the right hand sides of these equalities, we conclude that the claim holds.
\end{proof}

The final ingredient of the proof of Theorem \ref{thm mult} is the multiplicativity property of Braverman's index in Theorem \ref{thm mult index}.  Consecutively applying \eqref{eq loc mu M N}, Lemma \ref{lem vanish mult}, Proposition \ref{prop htp mult},  \eqref{eq loc hat mu M} and Theorem \ref{thm mult index}, we find that 
\[
\begin{split}
\bigl[ \indK(\cS_{M\times N}, \mu_{M \times N}) : \pi \bigr] &= \bigl[ \indK(\cS_{M\times N}|_{U^{M \times N}}, \mu_{M \times N}|_{U^{M \times N}}) : \pi \bigr] \\
&= \bigl[ \indK(\cS_{M\times N}|_{U_M \times N}, \tilde \mu_{U_M \times N}) : \pi \bigr] \\
&=\bigl[\indK\bigl(\cS_{M\times N}|_{U_M \times N}, \hat \mu_M|_{U_M \times N}\bigr):\pi \bigr] \\
&= \bigl[ \indK(\cS_{M\times N}, \hat \mu_{M}) : \pi \bigr] \\
&= \bigl[\indK(\cS_M, \mu_M) \otimes \ind_K(\cS_N):\pi \bigr].
\end{split}
\]
So Theorem \ref{thm mult} is true.


\section{Quantisation commutes with reduction} \label{sec pf [Q,R]=0}

As noted at the start of Subsection \ref{sec thm mult}, proving Theorem \ref{thm mult} was the main part of the work to prove Theorem \ref{thm [Q,R]=0}. In the remainder of the argument, the function $d$, defined in \eqref{def func d}, plays an important role. We start by discussing some relevant properties of this function in Subsection \ref{sec prop d}. Then, in Subsection \ref{sec loc mult}, we combine these with Theorem \ref{thm mult} to obtain an expression for the multiplicities in Theorem \ref{thm [Q,R]=0}, localised near a compact set. Then we are in the same situation as in \cite{Paradan14}. This will allow us to prove Theorem \ref{thm kM h}.
 In Subsection \ref{sec decomp mult}, we indicate how to apply, and generalise where necessary, the arguments in \cite{Paradan14} needed to finish the proof of Theorem \ref{thm [Q,R]=0}.

Throughout this section, we consider the setting of Theorem \ref{thm [Q,R]=0}.

\subsection{Properties of the function $d$} \label{sec prop d}

Let $d$ be the function on $Z_{\mu}$ defined in \eqref{def func d}. Localising $K$-invariant parts of indices to neighbourhoods of $d^{-1}(0)$ will be an important step in the proof of Theorem \ref{thm [Q,R]=0}. The properties of $d$ discussed in this subsection will be used in that localisation.

First of all, the estimate \eqref{eq est d} for $d$ implies that
for all $C_1 > 0$, there is a constant $C_2 >0$ such that for any $K$-equivariant $\Spinc$-manifold with moment map $\mu$, one has for all
$m \in Z_{\mu}$, 
\begin{equation} \label{eq est d mu}
\|\mu(m)\| \geq C_2 \quad \Rightarrow \quad d(m) \geq C_1.
\end{equation}
As a consequence, properness of $\mu$ implies that the function $d$ is proper as well. We will write $Z_{\mu}^{<0}$, $Z_{\mu}^{=0}$ and $Z_{\mu}^{>0}$ for the subsets of $Z_{\mu}$ where the function $d$ is negative, zero and positive, respectively. Then $Z_{\mu}^{=0}$ is compact by properness of $d$. By \eqref{eq est d}, the function $d$ is bounded below, so that the set $Z_{\mu}^{<0}$ is compact as well (though we will not use this).
In addition, we have the following generalisation of Lemma 4.16 in \cite{Paradan14} to our setting.
\begin{lemma} \label{lem 4.16}
There is a neighbourhood of $Z_{\mu}^{=0}$ disjoint from $Z_{\mu}^{<0}$ and $Z_{\mu}^{>0}$. 
(Hence $Z_{\mu}^{<0}$, $Z_{\mu}^{=0}$ and $Z_{\mu}^{>0}$ are all unions of connected components of $Z_{\mu}$.)
\end{lemma}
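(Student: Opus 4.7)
The strategy is to deduce the lemma directly from the local constancy of $d$ on $Z_\mu$ stated just before the lemma. First I would recall that because $d$ is locally constant, the preimage $d^{-1}(\{c\})$ of any real value $c$ is both open and closed in $Z_\mu$. Applying this to $c = 0$, to $c < 0$ and to $c > 0$ gives the parenthetical assertion at once: $Z_{\mu}^{=0}$, $Z_{\mu}^{<0}$ and $Z_{\mu}^{>0}$ are each clopen in $Z_\mu$, and in particular each a union of connected components of $Z_\mu$.

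For the main statement, I would use that $Z_\mu$ is itself closed in $M$, as the zero set of the continuous vector field $\vf^{\mu}$. Since $Z_{\mu}^{<0} \cup Z_{\mu}^{>0} = Z_\mu \setminus Z_{\mu}^{=0}$ is closed in $Z_\mu$ by the previous paragraph, it is closed in $M$ as well. The set
\[
U := M \setminus \bigl( Z_{\mu}^{<0} \cup Z_{\mu}^{>0} \bigr)
\]
is then open in $M$, contains $Z_{\mu}^{=0}$, and is by construction disjoint from $Z_{\mu}^{<0}$ and $Z_{\mu}^{>0}$, as required.

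I do not foresee a serious obstacle beyond the bookkeeping above. The only point that deserves a quick check is that the local constancy of $d$, proved by Paradan in the compact setting of \cite{Paradan14}, still applies here: this is a purely local property near points of $Z_\mu$, and Paradan's argument localises within the compact pieces $K\cdot (M^{\alpha} \cap \mu^{-1}(\alpha))$ of the decomposition in Lemma \ref{lem decomp Z mu}, so it transfers without change. As a side benefit, compactness of $Z_{\mu}^{=0}$ also falls out: applying \eqref{eq est d mu} with $C_1 = 1$ confines $Z_{\mu}^{=0}$ to a bounded sublevel set of $\|\mu\|$, which is compact by properness of $\mu$.
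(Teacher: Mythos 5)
Your argument is correct, and it reaches the conclusion by a genuinely different and shorter route than the paper's. The paper's proof re-derives a stronger statement: it uses \eqref{eq est d mu} to confine $d^{-1}([-1,1])$ to the compact set $\{\|\mu\| \leq C_2\}$ and then invokes the arguments behind Lemma 4.16 of \cite{Paradan14} on that compact set to conclude that $d$ takes only \emph{finitely many} values in $[-1,1]$, so that $d^{-1}(]{-}\varepsilon,\varepsilon[) = Z_{\mu}^{=0}$ for small $\varepsilon$. You instead take as given the local constancy of $d$ (already asserted before the lemma via \cite{Paradan14}), observe that this makes each of $Z_{\mu}^{<0}$, $Z_{\mu}^{=0}$, $Z_{\mu}^{>0}$ clopen in $Z_{\mu}$, and then use that $Z_{\mu}$ is closed in $M$ to produce the complement $U = M \setminus (Z_{\mu}^{<0}\cup Z_{\mu}^{>0})$ as an open neighbourhood in $M$. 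This is more elementary and, incidentally, yields a neighbourhood in $M$ rather than in $Z_{\mu}$, which is closer to how the lemma is actually applied in the proof of Corollary \ref{cor mO loc}. The only thing you lean on, which the paper's proof sidesteps by re-running Paradan's argument, is that local constancy of $d$ transfers to the noncompact setting; your remark that it is a purely local statement near points of $Z_\mu$ is the right justification, and is implicitly what the paper uses when it cites \cite{Paradan14} for local constancy before the lemma. Your concluding side remark about compactness of $Z_{\mu}^{=0}$ is also correct and matches the paper's separate observation that $Z_{\mu}^{=0}$ is compact by properness of $d$.
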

\begin{proof}
Let $C_2$ be the constant in \eqref{eq est d mu}, 
 with $C_1 = 1$. 
Then $\|\mu \| \leq C_2$ on $d^{-1}\bigl( [-1, 1] \bigr)$. The arguments in the proof of Lemma 4.16 in \cite{Paradan14} therefore show that $d$ takes finitely many values in $[-1, 1]$. Hence there is an $\varepsilon  \specialin ]0,1[$ such that  $d^{-1}\bigl(]-\varepsilon, \varepsilon[ \bigr)$ is is the desired neighbourhood of $Z_{\mu}^{=0}$.
\end{proof}

By Proposition \ref{prop vanishing d}, neighbourhoods of $Z_{\mu}^{>0}$ will not contribute to invariant parts of indices. In addition, the set where $d$ is negative is empty for the manifolds we will consider.  Let $\cO \cong K/T$ be a regular, admissible coadjoint orbit of $K$. Here admissibility means that $\cO$ has a $K$-equivariant $\Spinc$-structure, for which the inclusion map $\mu^{\cO}: \cO \hookrightarrow \kk^*$ is a moment map. Let $d_{\cO}$ be the function on $Z_{\mu_{M\times(-\cO)}}$ defined in \eqref{def func d}, applied to the diagonal action by $K$ on $M \times (-\cO)$, and the moment map $\mu_{M \times (-\cO)}$.
The following important property of the function $d_{\cO}$ was proved in Theorem 4.20 in \cite{Paradan14}.
\begin{proposition}\label{prop dO nonneg}
The function $d_{\cO}$ is nonnegative.
\end{proposition}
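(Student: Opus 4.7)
The plan is to reduce Proposition \ref{prop dO nonneg} to Theorem 4.20 in \cite{Paradan14} by observing that the inequality $d_{\cO} \geq 0$ is pointwise on $Z_{\mu_{M \times (-\cO)}}$: the value $d_{\cO}(m, \xi)$ depends only on $\beta := \mu(m) - \xi$ together with the infinitesimal actions of $\beta$ on $T_m M$ and $T_\xi(-\cO)$, and no global structure of $M$ or of $Z_{\mu_{M \times (-\cO)}}$ enters the definition. Paradan established exactly this pointwise inequality in his compact setting, so provided his proof is local at each $(m, \xi)$ separately it will transfer verbatim to our setting of proper moment maps.

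First I would fix $(m, \xi) \in Z_{\mu_{M \times (-\cO)}}$ and set $\beta := \mu(m) - \xi$; vanishing of $\vf^{\mu_{M \times (-\cO)}}$ at $(m, \xi)$ places $\beta$ in $\kk_m \cap \kk_\xi$. Since $\cO \cong K/T$ is regular, $\kk_\xi$ is a maximal torus, so $T_\xi(-\cO) \cong \kk/\kk_\xi$ with $\cL_\beta$ acting (up to sign) by $\ad(\beta)$, and hence $\tr|\cL_\beta^{T_\xi(-\cO)}| = \tr|\ad(\beta)|$ (the traces over $\kk/\kk_\xi$ and over $\kk$ coincide because $\ad(\beta)$ vanishes on $\kk_\xi$). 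Substituting this together with the splitting $T_{(m, \xi)}(M \times (-\cO)) = T_m M \oplus T_\xi(-\cO)$ into the definition \eqref{def func d} reduces the claim to the local inequality
\[
\|\beta\|^2 + \tfrac{1}{4} \tr|\cL_\beta^{T_m M}| \;\geq\; \tfrac{1}{4} \tr|\ad(\beta)|.
\]

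The main obstacle is proving this last inequality, which is the technical heart of Paradan's Theorem 4.20. A direct decomposition $T_m M = \kk/\kk_m \oplus N_m$ via an $\Ad$-invariant splitting of $\kk$ handles the tangential part (where $\cL_\beta$ acts essentially by $\ad(\beta)$ on $\kk/\kk_m$, contributing $\tr|\ad(\beta)|_{\kk/\kk_m}$), but leaves a gap of $\tfrac{1}{4}\tr|\ad(\beta)|_{\kk_m}$ on the right hand side still to be closed. Paradan closes this gap using admissibility of the $\Spinc$-structure on $\cO$ together with regularity of $\xi$, which provide the required lower bound on $\|\beta\|^2$ and a control on the $\kk_m$-representation structure of the normal slice $N_m$. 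Since every ingredient of this computation depends only on the local data $(\beta, \kk_m, T_m M)$ at a single point $(m, \xi)$, and never on compactness of $M$ or on finiteness of $Z_{\mu_{M \times (-\cO)}}$, inspection of the proof in \cite[Section 4]{Paradan14} shows that it applies unchanged in our setting, yielding $d_{\cO}(m, \xi) \geq 0$ as required.
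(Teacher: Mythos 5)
The proposal is correct and takes essentially the same approach as the paper: both simply reduce the claim to Theorem~4.20 of~\cite{Paradan14}. The paper invokes the citation without comment, whereas you additionally observe that $d_{\cO}$ is defined pointwise from local data (the value $\beta$ and the infinitesimal actions on $T_mM$ and $T_\xi(-\cO)$) so that compactness of $M$ plays no role---a justification the paper leaves implicit, and a reasonable one to make explicit.
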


\begin{remark}
If one is interested in the $K$-invariant part of $\QSpinc_K(M, \mu)$, one can replace $M$ by a manifold on which the function $d$ is nonnegative. Indeed, the fact that that $\pi_{K\cdot \rhoK}$ is the trivial representation implies that
\[
\begin{split}
\QSpinc_K(M, \mu)^K &= \bigl( \QSpinc_K(M, \mu) \otimes \QSpinc(-K\cdot \rhoK)\bigr)^K \\
&= \QSpinc_K\bigl(M\times (- K\cdot \rhoK), \mu_{M \times (- K\cdot \rhoK)} \bigr)^K. 
\end{split}
\]
So one can work with the manifold $M\times (- K\cdot \rhoK)$. On that manifold, the fact that
 the orbit $K\cdot \rhoK$ is regular and admissible implies that  the function $d_{K\cdot \rhoK}$ is nonnegative, by Proposition \ref{prop dO nonneg}.
\end{remark}

\subsection{Localising multiplicities} \label{sec loc mult}

As before, consider a regular, admissible coadjoint orbit $\cO$ of $K$.  Let $\pi_{\cO} := \QSpinc_K(\cO)$ be the corresponding irreducible representation of $K$. It is noted  in Proposition 3.9 in \cite{Paradan15} that every irreducible representation can be realised in this way, for precisely one regular, admissible orbit $\cO$.  In addition, $-\cO$ is also regular and admissible, and $\pi_{-\cO}$ is the dual representation to $\pi_{\cO}$. 

Let $m_{\cO} \in \Z$ be the multiplicity of $\pi_{\cO}$ in $\QSpinc_K(M, \mu)$. By applying Theorem \ref{thm mult}, with $N = -\cO$, we obtain an extension of the shifting trick:
\begin{equation} \label{eq m O 1}
m_{\cO} = \left(\QSpinc_K(M, \mu) \otimes \QSpinc_K(-\cO) \right)^K = \QSpinc_K\bigl(M \times (-\cO), \mu_{M \times (-\cO)}  \bigr)^K.
\end{equation}
This generalises the expression for $m_{\cO}$ at the start of Section 4.5.3 in \cite{Paradan14}, and is the basis of the proof of Theorem \ref{thm [Q,R]=0}. It also allows us to prove Theorem \ref{thm kM h}. Let $d_{\cO}$ be the function in Proposition \ref{prop dO nonneg}. 

\medskip \noindent \emph{Proof of Theorem \ref{thm kM h}.}
Suppose that $Q^{\Spinc}_K(M, \mu)\not=0$. Then there is a regular, admissible coadjoint orbit $\cO$ such that $m_{\cO}\not=0$. Because of \eqref{eq m O 1}, this means that
\[
\QSpinc_K\bigl(M \times (-\cO), \mu_{M \times (-\cO)}  \bigr)^K \not = 0.
\]
Proposition \ref{prop vanishing d} then implies that the function $d_{\cO}$ is not strictly positive. By the first point in the second part of Theorem 4.20 in \cite{Paradan14}, this implies that there is a class $(\kh)\in \cH_{\kk}$ such that $([\kk^M, \kk^M]) = ([\kh, \kh])$.
\hfill $\square$

Now consider the compact set
\[
Z_{\mu_{M \times (-\cO)}}^{=0} := d_{\cO}^{-1}(0).
\]
By Lemma \ref{lem 4.16}, there are disjoint, $K$-invariant open subsets $U^{<0}, U^{=0}, U^{>0} \subset M\times (-\cO)$ such that 
$Z_{\mu_{M \times (-\cO)}}^{=0}  \subset U^{=0}$, and
$d_{\cO}$ is negative on $U^{<0}  \cap Z_{\mu_{M \times (-\cO)}}$ and positive on $U^{>0} \cap Z_{\mu_{M \times (-\cO)}}$. 
By Proposition \ref{prop excision}, we have
  \begin{multline*}
 \QSpinc_K\bigl(M\times (-\cO), \mu^{M\times(-\cO)} \bigr)^K = \\
 \QSpinc_K \bigl(U^{<0}, \mu^{M\times(-\cO)}|_{U^{<0}} \bigr)^K + 
 \QSpinc_K \bigl(U^{=0}, \mu^{M\times(-\cO)}|_{U^{=0}} \bigr)^K +  \QSpinc_K\bigl(U^{>0}, \mu^{M\times(-\cO)}|_{U^{>0}} \bigr)^K.
 \end{multline*}
Now the first term on the right hand side vanishes by Proposition \ref{prop dO nonneg}, while the last term vanishes by Proposition \ref{prop vanishing d}. Therefore, we obtain a localised expression for the multiplicity $m_{\cO}$.
\begin{corollary} \label{cor mO loc}
For any $K$-invariant,  open neighbourhood $U^{=0}$ of $Z_{\mu_{M \times (-\cO)}}^{=0} $ such that $U^{=0} \cap Z_{\mu_{M \times (-\cO)}} =Z_{\mu_{M \times (-\cO)}}^{=0} $, the multiplicity $m_{\cO}$ of $\pi_{\cO}$ in $\QSpinc_K(M, \mu)$ equals
\[
m_{\cO} = \QSpinc_K(U^{=0}, \mu^{M\times(-\cO)}|_{U^{=0}})^K.
\]
\end{corollary}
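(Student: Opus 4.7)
The plan is to start from the extended shifting trick \eqref{eq m O 1}, which rewrites $m_{\cO}$ as the $K$-invariant part of the $\Spinc$-quantisation of $M\times(-\cO)$, and then to localise this index to a neighbourhood of $Z_{\mu_{M\times(-\cO)}}^{=0}$ by showing that the other connected components of $Z_{\mu_{M\times(-\cO)}}$ contribute nothing. This is essentially the computation sketched in the paragraph preceding the corollary; my job is to justify why the three-term decomposition is legitimate and why the two outer terms vanish.

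First I would observe that Lemma \ref{lem 4.16} provides a decomposition of $Z_{\mu_{M\times(-\cO)}}$ into the three disjoint closed subsets $Z_{\mu_{M\times(-\cO)}}^{<0}$, $Z_{\mu_{M\times(-\cO)}}^{=0}$, $Z_{\mu_{M\times(-\cO)}}^{>0}$, each of which is a union of connected components of $Z_{\mu_{M\times(-\cO)}}$. Given the neighbourhood $U^{=0}$ from the statement, I would choose disjoint $K$-invariant open sets $U^{<0}$ and $U^{>0}$ containing $Z_{\mu_{M\times(-\cO)}}^{<0}$ and $Z_{\mu_{M\times(-\cO)}}^{>0}$ respectively, whose intersections with $Z_{\mu_{M\times(-\cO)}}$ are precisely these closed subsets. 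Using Definition \ref{def index proper}, whose terms are organised according to the compact components $K\cdot(M^{\alpha}\cap\mu^{-1}(\alpha))$ of $Z_{\mu_{M\times(-\cO)}}$ produced by Lemma \ref{lem decomp Z mu}, the index of $\Spinc_K(M\times(-\cO),\mu_{M\times(-\cO)})$ splits as the sum of indices over $U^{<0}$, $U^{=0}$, $U^{>0}$, where each summand is finite in any given isotypical component by Corollary \ref{cor fin mult}.

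Next I would dispose of the two outer summands. For $U^{<0}$: Proposition \ref{prop dO nonneg} asserts $d_{\cO}\ge 0$, so $Z_{\mu_{M\times(-\cO)}}^{<0}=\emptyset$; consequently $v^{\mu_{M\times(-\cO)}}$ has no zeroes in $U^{<0}$, and Lemma \ref{lem vanishing} gives
\[
\QSpinc_K\bigl(U^{<0},\mu_{M\times(-\cO)}|_{U^{<0}}\bigr)=0.
\]
For $U^{>0}$: by construction $d_{\cO}>0$ on $U^{>0}\cap Z_{\mu_{M\times(-\cO)}}$, so Proposition \ref{prop vanishing d}, applied separately to each neighbourhood of a compact component of $Z_{\mu_{M\times(-\cO)}}$ in $U^{>0}$ and summed, shows
\[
\QSpinc_K\bigl(U^{>0},\mu_{M\times(-\cO)}|_{U^{>0}}\bigr)^K=0.
\]
Combining these vanishings with the additive decomposition and \eqref{eq m O 1} yields the claimed formula.

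The main obstacle, I expect, is the rigorous justification of the three-term splitting of the index itself. One has to check that the sum in Definition \ref{def index proper} really does decompose cleanly along the three pieces; this is where one must use both the fact that the three subsets are unions of connected components of $Z_{\mu_{M\times(-\cO)}}$ (from Lemma \ref{lem 4.16}) and the independence of the choice of the open sets $U_{\alpha}$ around each compact component (guaranteed by Proposition \ref{prop excision}). Once that bookkeeping is in place, the argument collapses to the two vanishing statements just described.
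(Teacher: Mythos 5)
Your proof is correct and follows essentially the same route as the paper: start from the shifting-trick identity \eqref{eq m O 1}, split the index along the three pieces produced by Lemma \ref{lem 4.16}, kill the $U^{<0}$ contribution via Proposition \ref{prop dO nonneg} and the $U^{>0}$ contribution via Proposition \ref{prop vanishing d}. Your spelling out of the first vanishing through Lemma \ref{lem vanishing} and your careful justification of the three-term splitting via Definition \ref{def index proper} and Proposition \ref{prop excision} are just slightly more explicit bookkeeping than the paper gives, not a different argument.
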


\subsection{Decomposing multiplicities} \label{sec decomp mult}

The expression for the multiplicity $m_{\cO}$ in Corollary \ref{cor mO loc}
 is the same as the expression just below the first display at the start of Section 4.5.3 in \cite{Paradan14}. In addition, the set $Z_{\mu_{M \times (-\cO)}}^{=0}$ is compact, so that the set $U^{=0}$ may be chosen to be relatively compact. So from here on, the situation is exactly the same as in \cite{Paradan14}. Therefore, the arguments needed to deduce Theorem \ref{thm [Q,R]=0} from Corollary \ref{cor mO loc} are the same as those used in \cite{Paradan14} to deduce Theorem 5.9 in that paper from the localised expression for $m_{\cO}$ at the start of Section 4.5.3. We finish the proof of Theorem \ref{thm [Q,R]=0} by summarising these arguments and how to apply them in our setting.

Since Proposition 4.25 in \cite{Paradan14} is stated and proved without assuming the manifold $M$ to be compact, the decomposition (4.30) in \cite{Paradan14} still holds in our setting:
\begin{equation} \label{eq m O P}
m_{\cO} = \sum_{\cP} m_{\cO}^{\cP},
\end{equation}
with $m_{\cO}^{\cP}$ as defined below (4.30) in \cite{Paradan14}.
Here the sum runs over all admissible coadjoint orbits $\cP$ of $G$ such that $\QSpinc_K(\cP) = \pi_{\cO}$ and the stabilisers of points in $\cP$ are conjugate to $\kk_{\xi}$, where $\xi \in \kk^*$ is an element such that
\[
[\kk^M, \kk^M] = [\kk_{\xi}, \kk_{\xi}].
\]
The integers $m_{\cO}^{\cP}$ can be computed in terms of actions by tori, by Theorem 4.29 in \cite{Paradan14}. 

This theorem is based on Propositions 4.15 and 4.28 in \cite{Paradan14}. Proposition 4.28 in \cite{Paradan14}, and its proof, remain true without changes for noncompact manifolds. In Proposition 4.15 in \cite{Paradan14}, one considers a {compact} component of the set or zeroes of the vector field induced by a moment map. Since the set $Z^{=0}_{\mu_{M \times (-\cO)}}$ is compact, this proposition still applies in our setting.
Therefore, Proposition 4.15 in \cite{Paradan14} can be used to show that the expression for $m_{\cO}^{\cP}$ above Proposition 4.28 in \cite{Paradan14} is true in the proper moment map case. It then follows that Theorem 4.29 in \cite{Paradan14} generalises to this more general case, because the remainder of its proof is a local computation. 

This finally  allows one to prove Theorem \ref{thm [Q,R]=0}. In Sections 5.1 and 5.2 of \cite{Paradan14}, possibly noncompact $\Spinc$-manifolds with proper moment maps are considered, so the results there apply in our setting. Using Proposition 5.8 in \cite{Paradan14}, and the definition of quantisation of reduced spaces on page 53 of \cite{Paradan14}, one concludes that Theorem 5.9 in \cite{Paradan14} generalises to the proper moment map case, which is to say that 
the expression for $m_{\lambda}$ in 
Theorem \ref{thm [Q,R]=0} is true.

\bibliographystyle{plain}
\bibliography{mybib}

\end{document}